\pgfplotsset{compat=1.18}
\newcommand{\details}[1]{\ifthenelse{\boolean{details}}{\textcolor{blue}{#1}}{}}
\theoremstyle{plain}
\newtheorem{prop}{Proposition}[section]
\newtheorem{thm}[prop]{Theorem}
\newtheorem{lem}[prop]{Lemma}
\theoremstyle{definition}
\newtheorem{defi}[prop]{Definition}
\theoremstyle{remark}
\newtheorem{rmq}{Remark}[section]
\numberwithin{equation}{section}
\newcounter{assumption}
\newcommand{\R}{\mathbb{R}}
\newcommand{\Z}{\mathbb{Z}}
\newcommand{\N}{\mathbb{N}}
\newcommand{\dd}{\mathrm{d}}
\newcommand{\ep}{\varepsilon}
\newcommand{\alp}{\alpha}
\newcommand{\e}{{\mathrm{e}}}
\newcommand{\dt}{{\Delta t}}
\newcommand{\dx}{{\Delta x}}
\newcommand{\dz}{{\Delta z}}
\newcommand{\ds}{\displaystyle}
\newcommand{\tin}{{\mathrm{in}}}
\newcommand{\ccl}{[\![}
\newcommand{\ccr}{]\!]}
\newcommand{\pt}{\renewcommand{\labelitemi}{$\bullet$}}
\renewcommand{\phi}{\varphi}
\renewcommand{\ge}{\geqslant}
\renewcommand{\le}{\leqslant}
\renewcommand\atop[2]{\genfrac{}{}{0pt}{}{#1}{#2}}
\newcommand{\dHe}{{H_\eta}} %H discret avec eta
\newcommand{\dRe}{{R_\eta}} %R discret avec eta
\newcommand{\K}{{\mathcal{K}}} %Le noyau dans l'exemple de Hamiltonien intégral
\newcommand{\dHeAP}{{H_\eta^{\mathrm{AP}}}}
\newcommand{\dReAP}{{R_\eta^{\mathrm{AP}}}}
\newcommand{\Hcont}{{\mathcal{H}}} %Le Hamiltonien continu
\newcommand{\Rcont}{{\mathcal{R}}} %Le net growth rate R en continu 
\newcommand{\Lin}{{\mathrm{Lip}_\tin}} %Constante de Lipschitz de la donnée initiale
\newcommand{\Linf}{{\mathrm{L}_{\max}}} %Le S_max
\newcommand{\LR}{{\mathrm{Lip}_\Rcont}} %Constante de Lipschitz de R en temps
\newcommand{\Lx}{\mathrm{Lip}_x}
\newcommand{\Lt}{\mathrm{Lip}_t}
\newcommand{\Lxn}{{\Lx(t^n)}}
\newcommand{\LxT}{{\Lx(T)}}
\newcommand{\tn}{{t^n}}
\newcommand{\tnp}{{t^{n+1}}}
\newcommand{\LH}{{\mathcal{L}}} % borne pour les propriétés de H, avant définition de schéma bien posé.
\newcommand{\CLH}{{C_\LH}} %borne pour la qualité de l'approximation de H
\newcommand{\Ch}{{C_\dHe}}
\newcommand{\CHL}{{\mathrm{Lip}_\dHe}} %Fonction C_H(L) pour les CFL
\newcommand{\CHLAP}{{\mathrm{Lip}_\dHeAP}}
\newcommand{\bM}{{b_M}} %Bornes de b
\newcommand{\bm}{{\bM^{-1}}}
\newcommand{\Lb}{{\bM}} %Constante de Lipschitz de b en ses deux variables
\newcommand{\ua}{{\underline{a}}}
\newcommand{\oa}{{\overline{a}}}
\newcommand{\ub}{{\underline{b}}}
\newcommand{\ob}{{\overline{b}}}
\newcommand{\ubeta}{{\underline{\beta}}}
\newcommand{\bu}{{\boldsymbol{u}}}
\newcommand{\bv}{{\boldsymbol{v}}}
\newcommand{\bw}{{\boldsymbol{w}}}
\newcommand{\bI}{{\boldsymbol{I}}}
\newcommand{\bJ}{{\boldsymbol{J}}}
\newcommand{\bx}{{\boldsymbol{x}}}
\newcommand{\wn}{{w\left(\tn\right)}}
\newcommand{\Jdt}{{I_\dt}}
\newcommand{\Idt}{{I_\dt}}
\newcommand{\Jl}{{I_0}}
\newcommand{\Jlp}{{I_0^+}}
\newcommand{\Jdtn}{{I_{\dt^n}}}
\newcommand{\Jdtkm}{{I^{k,-}_\dt}}
\newcommand{\Jdtkp}{{I^{k,+}_\dt}}
\newcommand{\Jlkm}{{I^{k,-}_0}}
\newcommand{\Jlkp}{{I^{k,+}_0}}
\newcommand{\MsJ}{{\mathcal{M}_s^I}}
 \newcommand{\MsJnp}{{\mathcal{M}_s^{\Jdt(\tn+s)}}}
\newcommand{\MepvI}{{\mathcal{M}_\ep^J}}
\newcommand{\MepvInp}{{\mathcal{M}_\ep^{J^{n+1}}}}
\newcommand{\MzerovI}{{\mathcal{M}_0^I}}
\newcommand{\MzerovInp}{{\mathcal{M}_0^{I^{n+1}}}}
\newcommand{\udt}{{u_\dt}}
\newcommand{\ul}{{u_0}}
\newcommand{\udtn}{{u_{\dt^n}}}
\newcommand{\uin}{{u^\tin}}
\newcommand{\uinconv}{{u^\tin_{\mathrm{conv}}}}
\newcommand{\uinnotconv}{{u^\tin_\mathrm{not\;conv}}}
\newcommand{\Cc}{{\mathcal{C}^0}}
\newcommand{\vep}{{v_\ep}}
\newcommand{\Jep}{{J_\ep}}
\newcommand{\vin}{{v^\tin_\ep}}
\newcommand{\tvik}{{\tilde{v}_{i,k}}}
\newcommand{\twik}{{\tilde{w}_{i,k}}}
\newcommand{\cm}{{c_m^\tin}}
\newcommand{\cM}{{c_M^\tin}}
\newcommand{\phiAP}{{\phi^{\mathrm{AP}}}}
\newcommand{\vk}{{v^k}}
\newcommand{\vl}{{v^\infty}}
\newcommand{\wk}{{w^k}}
\newcommand{\wl}{{w^\infty}}
\newcommand{\vdtk}{{v^k_\dt}}
\newcommand{\wdtk}{{w^k_\dt}}
\newcommand{\psim}{{\Psi^-_{\alpha,\sigma}}}
\newcommand{\psip}{{\Psi^+_{\alpha,\sigma}}}
\newcommand{\phip}{{\phi^+_{\alpha,\sigma}}}
\newcommand{\owk}{{\overline\wdtk}}
\newcommand{\Mm}{{\mathcal{M}}}
\newcommand{\bp}{{\boldsymbol{p}}}
\newcommand{\bq}{{\boldsymbol{q}}}
\newcommand{\be}{{\boldsymbol{e}}}
\newcommand{\bi}{{\boldsymbol{i}}}
\renewcommand{\leq}{\le}
\newcommand{\mail}[1]{\href{mailto:#1}{\texttt{#1}}}
\title{Numerical approximation of a class of constrained Hamilton-Jacobi equations
}
\author{
Beno\^it Gaudeul\footnote{
Universit\'e Paris-Saclay, CNRS, Laboratoire de math\'ematiques d’Orsay, 91405, Orsay, France.
\mail{benoit.gaudeul@universite-paris-saclay.fr}
},
H\'el\`ene Hivert\footnote{Univ Rennes, Inria, Géosciences Rennes - UMR 6118, F-35000 Rennes, France. \mail{helene.hivert@inria.fr}} }
\date{}
\begin{document}

%\setboolean{details}{true}
\setboolean{details}{false}
\selectlanguage{english}
\maketitle

\begin{abstract}
In this paper, we introduce a framework for the discretization of a class of constrained Hamilton-Jacobi equations, a system coupling a Hamilton-Jacobi equation with a Lagrange multiplier determined by the constraint. The equation is non-local, and the constraint has bounded variations. We show that, under a set of general hypothesis, the approximation obtained with a finite-differences monotonic scheme, converges towards the viscosity solution of the constrained Hamilton-Jacobi equation.

Constrained Hamilton-Jacobi equations often arise as the long time and small mutation asymptotics of population models in quantitative genetics.  As an example, we detail the construction of a scheme for the limit of an integral Lotka-Volterra equation. We also construct and analyze an Asymptotic-Preserving (AP) scheme for the model outside of the asymptotics. We prove that it is stable along the transition towards the asymptotics.

The theoretical analysis of the schemes is illustrated and discussed with numerical simulations. The AP scheme is also used to conjecture the asymptotic behavior of the integral Lotka-Volterra equation, when the environment varies in time.
\end{abstract}

\setcounter{tocdepth}{2}
\tableofcontents

\newpage

\section{Introduction}

We are interested in the design and analysis of numerical schemes for a class of constrained Hamilton-Jacobi equations,
\begin{equation}
 \label{eq:HJ}
 \tag{HJ}
 \left\{
\begin{array}{l l}
 \ds\partial_t u(t,x) + b\left(x,I(t)\right)\Hcont\left(\nabla_x u(t,x)\right) + \Rcont\left(t,x,I(t)\right)= 0, & \ds  x\in\R^d, \; t> 0 \vspace{4pt} \\
 \ds  \min\limits_{x\in\R^d} u(t,x) = 0, & \ds t\ge 0,
\end{array}
 \right.
\end{equation}
supplemented with an initial condition $u^\tin$, and where the Hamiltonian $p\mapsto \Hcont(p)$ is strictly convex.
Equations such as \eqref{eq:HJ} often arise as long time and small mutations asymptotics
of models of population structured by a phenotypic trait \cite{DiekmannJabinMischlerPerthame2005, PerthameTranspEqBio, CarrilloCuadradoPerthame2007, DesvillettesJabinMischlerRaoul2008, NordmannPerthameTaing2018, LorzLorenziClairambaultAl2015, LorenziPouchol2020}. This is a particular case of models of the theory of adaptive evolution \cite{MetzGeritzMesznaAl1996, GeritzMetzKisdiAl1997, GeritzKisdiMeszenaAl1998, MeszenaGyllenbergAl2005,Diekmann2004}. They describe the size $n_\ep(t,x)$ of a population, where $t$ denotes the time, $x$  the phenotypic trait, and $\ep\in(0,1]$  a scaling parameter.
In what follows, we will consider as an example the following Lotka-Volterra integral equation
\begin{equation}
\label{eq:LotkaVolterra_Integral}
  \ep \partial_t n_\ep(t,x) - \frac{1}{\ep^d} \int_{\R^d} \K\left(  \frac{y-x}{\ep}\right) b\left(y,J_\ep(t)\right) n_\ep(t,y)\dd y - \Rcont(t,x,J_\ep(t)) n_\ep(t,x)=0, \;\; x\in\R^d, \; t>0,
\end{equation}
where $J_\ep$ denotes a weighted size of the population. The weight $\psi$ being given, it is defined by
\[
 J_\ep(t)=\int_{\R^d}\psi(x) n_\ep(t,x) \dd x,
\]
for all $t\ge 0$.  Equation \eqref{eq:LotkaVolterra_Integral} is supplemented with an initial condition $n_\ep^\tin \in L^1(\R^d)$.
The parameters in \eqref{eq:LotkaVolterra_Integral} and \eqref{eq:HJ} are chosen according to the biological context. Indeed, with the above notations, the evolution of the population is driven by births, through the birth rate $b(x,J_\ep)$,
and deaths. The net growth rate is denoted here $\Rcont(t,x,J_\ep)$. Note that $b$ and $\Rcont$ depend on the phenotypic trait $x$, meaning that some individuals may be advantaged because they are better adapted. They also depend on the population burden on the environment, through the size of the population $J_\ep$. The phenotypic trait of the parent is transmitted to its offspring,  possibly with mutations.
In \cite{BarlesPerthame2007, BarlesMirrahimiPerthame2009} and in \eqref{eq:LotkaVolterra_Integral}, it is modelled by an integral, where $\K$ denotes a probability kernel. It could also have been represented by a Laplacian  \cite{BarlesPerthame, BarlesMirrahimiPerthame2009, LorzMirrahimiPerthame2011}.

Considering the limit $\ep\to 0$ in models such as \eqref{eq:LotkaVolterra_Integral}, stands for the study of the population, in an asymptotic regime of long time and small mutations. It is usually referred to, as the separation of ecological and evolutionary time scales. If the population does not extinct or grow uncontrolled, when $\ep\to 0$, the population  $n_\ep(t,x)$ is expected to concentrate around a set of dominant traits, see for instance \cite{BarlesPerthame, CarrilloCuadradoPerthame2007}.
The dynamics of this concentration is usually studied using the Hopf-Cole transform, a logarithmic transform of the unknown, as in \cite{BarlesPerthame2007, CarrilloCuadradoPerthame2007, DiekmannJabinMischlerPerthame2005, LorzMirrahimiPerthame2011}.
Namely, $v_\ep=-\ep\ln(n_\ep)$ is introduced. With the example \eqref{eq:LotkaVolterra_Integral}, it satisfies
\begin{equation}
 \label{eq:P_ep}
 \tag{$P_\ep$}
 \left\{
\begin{array}{l}
  \ds \partial_t \vep(t,x) + \int_{\R^d} \K(z)\; b\left(x+\ep z, \Jep\left(t\right)\right)\; \e^{-\left(\vep\left(t,x+\ep z\right) - \vep\left(t,x\right)\right)/\ep} \dd z + \Rcont\left(t,x,\Jep\left(t\right)\right) = 0 \vspace{4pt} \\
 \ds \Jep\left(t\right) = \int_{\R^d} \psi(x) \e^{-\vep\left(t,x\right)/\ep} \dd x,
\end{array}
 \right.
\end{equation}
with initial condition $v_\ep^\tin=-\ep\ln(n_\ep^\tin)$. Under suitable assumptions, see \cite{BarlesPerthame2007, BarlesMirrahimiPerthame2009}, $v_\ep$
is expected to converge, when $\ep\to 0$, to the solution $u$ of a constrained Hamilton-Jacobi equation such as \eqref{eq:HJ}, with
\begin{equation}
\label{eq:discussion_H_Sans1}
\forall p\in\R^d, \;\Hcont\left(p\right)=\int_{\R^d} \K\left(z\right) e^{-z\cdot p}dz. %-1.
\end{equation}
The size of the population $J_\ep$, also converges, its limit is denoted $I$ in \eqref{eq:HJ}. In the asymptotic regime, it is not defined as the population size anymore, but it behaves as a Lagrange multiplier regarding the constraint $\min u= 0$.

The main difficulty in the analysis of \eqref{eq:HJ} comes from the regularity of its solution. Indeed,  $u$ is expected to enjoy no more than Lipschitz regularity, as it can be expected for viscosity solutions of Hamilton-Jacobi equation \cite{BarlesBook, CrandallIshiiLions1992, Evans}, but  $I$ may have jumps discontinuities
\cite{BarlesPerthame, PerthameTranspEqBio, BarlesMirrahimiPerthame2009}.
Because of these discontinuities, the appropriate functional space for the well-posedness of \eqref{eq:HJ} is $W^{1,\infty}_{loc}(\R^d)\times BV_{loc}$. The uniqueness of the pair $(u,I)$ solution of \eqref{eq:HJ} has been addressed in some particular cases \cite{BarlesPerthame, MirrahimiRoquejoffre2016, Kim2021}, and then in a more general setting close to the problem under study \cite{CalvezLam2020}.
Keeping in mind the biological models it comes from, the hypothesis  set on the parameters in \eqref{eq:HJ} are done following \cite{BarlesMirrahimiPerthame2009}, and assumptions are added for \eqref{eq:HJ} to be in the framework of \cite{CalvezLam2020}.
In what follows, we will then suppose that $\Hcont$, $b$, $\Rcont$, and $u^\tin$ satisfy the following assumptions:
\begin{itemize}
\item $\Hcont:\R^d\longrightarrow\R^d$, is of class $\mathcal{C}^2$, and such that
\begin{equation}
\label{hyp:H_convex_superlinear}
\refstepcounter{assumption}
\tag{A\theassumption}
\Hcont \text{\;is strictly convex, and\;}
 \lim\limits_{|p|\to+\infty} \frac{\Hcont(p)}{|p|} =+\infty,
\end{equation}
and moreover
\begin{equation}
 \label{hyp:H0}
 \refstepcounter{assumption}
\tag{A\theassumption}
 \Hcont\left(0\right)=0, \;\;\mathrm{and}\;\;\forall p\in\R^d, \;\Hcont\left(p\right)\ge 0.
\end{equation}
\begin{rmq}
\label{rmq:GeneralCase}
We assume here that  $\Hcont\left(0\right)=0$, but it is in general not true in the models, see \cite{BarlesPerthame2007, BarlesMirrahimiPerthame2009, NordmannPerthameTaing2018} for instance. However, this assumption is done for a sake of simplicity and can be relaxed. Indeed, remarking that
\[
 \partial_t u + b\Hcont\left(\nabla_x u\right) + \Rcont = \partial_t u + b \left(\Hcont\left(\nabla_x u\right) - \Hcont\left(0\right)\right) + \Rcont+ b\Hcont\left(0\right),
\]
gives the key to adapt the results to more general convex Hamiltonians which are minimal at $p=0$.
\end{rmq}
\item
$b:\R^d\times\R\to\R$, is a smooth positive and bounded function. We suppose that there exists $\bM>1$ such that
\begin{equation}
\label{hyp:b_bounds}
\refstepcounter{assumption}
\tag{A\theassumption}
 \forall x\in\R^d, \;\forall I\in\R, \; 0<\bm\le b\left(x,I\right)\le \bM,\;\;\mathrm{and}\;\;\forall (i,j)\in\ccl 1,d\ccr^2, \;\; |\partial_{x_i,x_j}^2 b\left(x,I\right)|\le \bM,
\end{equation}
and such that it is also a Lipschitz constant of $b$ in both variables
\begin{equation}
 \label{hyp:b_lipschitz}
 \refstepcounter{assumption}
\tag{A\theassumption}
 \forall x, y\in\R^d, \;\forall I, J\in\R, \; \left|b\left(x,I\right)-b\left(y,J\right)\right|\le \Lb\left( \left|x-y\right|+\left|I-J\right|\right).
\end{equation}
Moreover, we suppose that
\begin{equation}
\label{hyp:b_decreasing}
\refstepcounter{assumption}
\tag{A\theassumption}
\forall x\in\R^d, \;\;I\mapsto b\left(x,I\right) \;\text{is non-increasing}.
\end{equation}
This hypothesis is relevant when considering the biological meaning of $b$ and $I$. Indeed, it implies that the larger the population is, the smaller its birth rate is, as the environment may not offer enough resources for the population.
\item $\Rcont:\R_+\times\R^d\times\R\longrightarrow\R$,
is  smooth, and decreasing in its last variable. More precisely, there exists a constant $K_1$ such that
\begin{equation}
 \label{hyp:R_decreasing}
 \refstepcounter{assumption}
\tag{A\theassumption}
 \forall t\ge 0, \; \forall x\in\R^d,\;\forall I\in\R, \; -K_1\le \partial_I \Rcont\left(t,x,I\right) \le -K_1^{-1}<0,
\end{equation}
and $I_M\ge I_m> 0$,  such that
\begin{equation}
 \label{hyp:R_ImIM}
 \refstepcounter{assumption}
\tag{A\theassumption}
 \frac{I_M}{K_1}-\bM \ge 1, \;\; \min\limits_{t\ge 0, x\in\R^d} \Rcont\left(t,x,I_m\right)\ge 0, \;\; \max\limits_{t\ge 0, x\in\R^d} \Rcont\left(t,x,I_M\right)\le 0.
\end{equation}
We emphasize on the fact, that the first condition is purely technical, and will appear naturally in what follows. Note that since $\Rcont$ is decreasing in its last variable, $I_M$ can be chosen according to the third condition in \eqref{hyp:R_ImIM}, and then be increased enough so that the first one is satisfied. Let us also define $K_2$ such that
\begin{equation}
 \label{hyp:R_bounded}
 \refstepcounter{assumption}
\tag{A\theassumption}
 \forall t\ge 0, \;\forall I\in[\min(I_m/2,I_m-1), 2I_M], \; \|\Rcont\left(t,\cdot,I\right)\|_{W^{2,\infty}(\R^d)}\le K_2,
\end{equation}
and suppose in addition that $(t,I)\longrightarrow \|\Rcont\left(t,\cdot,I\right)\|_{L^\infty(\R^d)}$ is bounded on any bounded set of $\R_+\times\R$.
Finally, the dependence in time of $\Rcont$ is supposed to be $\LR$-Lipschitz,
\begin{equation}
 \label{hyp:R_t-Lipschitz}
 \refstepcounter{assumption}
\tag{A\theassumption}
  \forall x\in\R^d, \;\forall I\in\R, \forall t,\;s\in\R_+, \; |\Rcont\left(t,x,I\right)-\Rcont\left(s,x,I\right)|\le \LR |t-s|,
\end{equation}
and to simplify notations, we introduce
\begin{equation}
\label{hyp:R_K}
K=\max\{K_1,\;K_2,\; \LR\}.
\end{equation}
\begin{rmq}
One can notice that $I\mapsto b\left(x,I\right)\Hcont\left(p\right) + \Rcont\left(t,x,I\right)$ is decreasing for all $t\in\R_+$, $x\in\R^d$ and $p\in\R^d$. This is indeed a consequence of  \eqref{hyp:H0}-\eqref{hyp:b_decreasing}-\eqref{hyp:R_decreasing}. It is an important condition for $I$ to be uniquely determined as the Lagrange multiplier of the viscosity solution of \eqref{eq:HJ}, see \cite{CalvezLam2020}.
\end{rmq}
\item $u^\tin$ is $\Lin$-Lipschitz,
\begin{equation}
 \label{hyp:u0_Lipschitz}
 \refstepcounter{assumption}
\tag{A\theassumption}
 \forall x,y\in\R^d, \;|u^\tin(x)-u^\tin(y)|\le \Lin |x-y|,
\end{equation}
is coercive
\begin{equation}
 \label{hyp:u0_coercive}
 \refstepcounter{assumption}
\tag{A\theassumption}
 \exists\; \oa >\ua >0, \;\exists\; \ub,\;\ob\in\R, \;\forall x\in\R^d, \;
 \ua|x|+\ub \le u^\tin(x)\le \oa|x|+\ob,
\end{equation}
and  its minimum is equal to $0$
\begin{equation}
 \label{hyp:u0_minimum}
 \refstepcounter{assumption}
\tag{A\theassumption}
 u^\tin\ge 0, \;\; \mathrm{and} \;\; \exists\; x_0\in\R^d, \;u^\tin(x_0)=0,
\end{equation}
even though this minimum point is not necessarily unique.
Note that $|\cdot|$  denotes here
the Euclidean norm on $\R^d$.
\end{itemize}

Considering  \eqref{eq:HJ} with $\Rcont(t,x,I)=\Rcont(x,I)$, the following result holds
\begin{thm}[\cite{CalvezLam2020}]
 \label{thm:CalvezLam2020}
 Suppose that assumptions \eqref{hyp:H_convex_superlinear} to \eqref{hyp:u0_minimum} are satisfied, and that $\Rcont$ does not depend on $t$.
 \begin{enumerate}
  \item Let $(u_1,I_1)$ and $(u_2,I_2)$  two solutions of \eqref{eq:HJ} in $W^1_\mathrm{loc}\times BV_\mathrm{loc}$ with the same initial data $u^\tin$.
  Then $u_1=u_2$ and $I_1=I_2$ almost everywhere.
  \item Let $I\in BV(0,T)$ be given. Then the variational solution $u$ of
  \begin{equation}
   \label{eq:HJSansContrainte}
   \partial_t u(t,x) + b(x,I(t)) \Hcont\left(\nabla_x u(t,x)\right) + \Rcont(x,I(t))=0, \; t>0, x\in\R^d,
  \end{equation}
with initial data $u^\tin$, is the unique locally Lipschitz viscosity solution of \eqref{eq:HJSansContrainte} over $(0,T]\times\R^d$. Moreover, $u$ is independent of the choice of a representative of $I$ in $BV$. Namely, if \eqref{eq:HJSansContrainte} is considered with two source terms $I_1$ and $I_2$ in $BV(0,T)$ such that $I_1=I_2$ almost everywhere, then $u_1=u_2$.
 \end{enumerate}
\end{thm}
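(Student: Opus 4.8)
This is the main result of \cite{CalvezLam2020}; here is how I would organise its proof. I would establish assertion \textbf{2} first, since it is self-contained, and then deduce \textbf{1} from it.

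\emph{Assertion 2 (variational solution).} Set $H_I(x,p):=b(x,I)\Hcont(p)+\Rcont(x,I)$ and let $L_I(x,q)$ be its Legendre transform in $p$, i.e. $L_I(x,q)=b(x,I)\,\Hcont^{\star}\!\big(q/b(x,I)\big)-\Rcont(x,I)$ with $\Hcont^{\star}$ the convex conjugate of $\Hcont$. For $I\in BV(0,T)$ I define
\[
u(t,x)=\inf\Big\{\,u^\tin(\gamma(0))+\int_0^t L_{I(s)}\big(\gamma(s),\dot\gamma(s)\big)\,\dd s\ :\ \gamma\in W^{1,1}([0,t];\R^d),\ \gamma(t)=x\,\Big\}.
\]
Since $\Hcont$ is superlinear \eqref{hyp:H_convex_superlinear} and $\bm\le b\le\bM$ \eqref{hyp:b_bounds}, $L_I$ is superlinear in $q$ uniformly in $(x,I)$ on the relevant bounded range of $I$, and bounded below there; with the coercivity of $u^\tin$ \eqref{hyp:u0_coercive} this makes the infimum finite and attained and forces minimizing curves to have uniformly bounded length on $[0,t]$, whence $u$ is locally Lipschitz on $(0,T]\times\R^d$ (and keeps linear lower/upper bounds inherited from $u^\tin$). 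The dynamic programming principle then gives, by the classical argument, that $u$ is a viscosity solution of \eqref{eq:HJSansContrainte}; and since $s\mapsto L_{I(s)}$ enters only through the time integral, $u$ does not change when $I$ is modified on a Lebesgue-null set, which is the last assertion. Uniqueness of the locally Lipschitz viscosity solution: mollify $I_\delta=I\ast\rho_\delta\to I$ in $L^1(0,T)$ and a.e.; for smooth $I_\delta$ the coefficients are continuous in $t$, so the standard comparison principle gives uniqueness of the viscosity solution $u_\delta$ and identifies it with the variational solution for $I_\delta$; the uniform Lipschitz and coercivity bounds then let one pass to the limit (stability of viscosity solutions under local uniform convergence), and any other locally Lipschitz viscosity solution for $I$ is trapped by the same limiting comparison, so it equals $u$. (Equivalently one may run a doubling-of-variables comparison directly with the time-\emph{integrated} Hamiltonian $\int_0^t H_{I(s)}$.)

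\emph{Assertion 1 (uniqueness of the pair).} By \textbf{2} it suffices to prove $I_1=I_2$ a.e., for then $u_1=u[I_1]=u[I_2]=u_2$. Two structural facts drive the argument. First, monotonicity: $H_I$ is non-increasing in $I$ by \eqref{hyp:H0}--\eqref{hyp:b_decreasing}--\eqref{hyp:R_decreasing}, hence so is $L_I$; using a minimizing curve for the larger-$I$ problem in the variational formula for the smaller one, together with the quantitative strict decrease $\partial_I\Rcont\le -K_1^{-1}<0$, one gets, whenever $I_1\le I_2$ a.e. on $[0,t]$,
\[
u[I_2](t,x)-u[I_1](t,x)\ \ge\ K_1^{-1}\int_0^t\big(I_2(s)-I_1(s)\big)\,\dd s\qquad\text{for all }x\in\R^d,
\]
and, using \eqref{hyp:b_lipschitz} and \eqref{hyp:R_bounded}, the reverse Lipschitz bound $\|u[I_1](t,\cdot)-u[I_2](t,\cdot)\|_{L^\infty}\le C\int_0^t|I_1-I_2|$. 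The first inequality already settles the ordered case $I_1\le I_2$ (or $I_2\le I_1$): taking the minimum over $x$ and using $\min_x u[I_i](t,\cdot)=0$ forces $\int_0^t(I_2-I_1)=0$, hence $I_1=I_2$ a.e. Second, the constraint determines $I$ pointwise: if $x^*(t)$ minimizes $u_i(t,\cdot)$ then $u_i\ge 0$ with $u_i(t,x^*(t))=0$, so the zero function is a smooth test function touching $u_i$ from below at $(t,x^*(t))$, and the supersolution inequality together with $\Hcont(0)=0$ \eqref{hyp:H0} gives $\Rcont(t,x^*(t),I_i(t))\ge 0$; conversely, $u_i(t,\cdot)$ is semiconcave for $t>0$ (the standard regularizing effect of the convex Lax--Oleinik flow under the present $C^2$/$W^{2,\infty}$ regularity of $\Hcont,b,\Rcont$), hence differentiable with vanishing gradient at its minimizer, and since $t\mapsto\min_x u_i(t,\cdot)\equiv 0$ one finds, for a.e.\ $t$, $\Rcont(t,x^*(t),I_i(t))=-\partial_t u_i(t,x^*(t))\le 0$; thus $\Rcont(t,x^*(t),I_i(t))=0$. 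As $\partial_I\Rcont\le -K_1^{-1}<0$, the equation $\Rcont(t,x,\cdot)=0$ defines $I$ as a Lipschitz function of $(t,x)$, so $I_i(t)$ is Lipschitz in $x_i^*(t)$; comparing the two identities and controlling $|x_1^*(t)-x_2^*(t)|$ through $\|u_1(t,\cdot)-u_2(t,\cdot)\|_{L^\infty}$ via the coercivity of the $u_i$ yields $|I_1(t)-I_2(t)|\le C'\!\int_0^t|I_1-I_2|$ for a.e.\ $t$. With $D(t)=\int_0^t|I_1-I_2|$ this reads $D'\le C'D$ a.e., $D(0)=0$, so Gronwall gives $D\equiv 0$, i.e. $I_1=I_2$ a.e.

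\emph{Main obstacle.} The genuine difficulty is the mere $BV$ regularity of $I$ in time: it may jump, whereas the comparison and semiconcavity machinery of Hamilton--Jacobi theory is classical for coefficients continuous (or Lipschitz) in $t$. The variational formula renders the jumps harmless for \textbf{2} (they are integrated out), but in \textbf{1} the coupling is \emph{non-local in time} — $u[I](t,\cdot)$ depends on $I$ over all of $[0,t]$ — so uniqueness cannot be argued pointwise in $t$ and must be cast as the Volterra/Gronwall estimate above. The delicate step is to make the ``inversion of the constraint'' ($|I_1(t)-I_2(t)|\lesssim D(t)$) rigorous at the discontinuity times of $I$: there one cannot invoke any pointwise-in-$t$ regularity and must instead select, for a.e.\ $t$, ``good'' minimizers $x^*(t)$ at which $u_i$ is differentiable in $(t,x)$, leaning on the a.e.-in-$t$ semiconcavity of $u_i(t,\cdot)$ and on approximate continuity of $BV$ functions, and also handle possible non-uniqueness of $x^*(t)$ (the estimate only uses $\Rcont(t,\cdot,I_i(t))$ evaluated at a minimizer, together with the Lipschitz-in-$x$ control of $\Rcont$). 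This a.e.-in-$t$ selection is the technical core of the uniqueness proof; the quantitative strict monotonicity $\partial_I\Rcont\le -K_1^{-1}<0$ is what ultimately makes the inversion possible, and the superlinearity of $\Hcont$ with $\Hcont(0)=0$ is what keeps the constraint active — exactly the structural assumptions singled out after \eqref{eq:HJ}.
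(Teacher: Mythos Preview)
The paper does not prove this theorem: it is quoted from \cite{CalvezLam2020} (the citation is in the theorem header) and used throughout as a black box --- see Remark~\ref{rmq:conjecture_unicite} and the way it is invoked in Section~\ref{sec:ident} and Remark~\ref{rmq:ToConclude}. There is therefore no in-paper proof to compare your proposal against, and you acknowledge this yourself in your first line.

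On the merits of the sketch: part~\textbf{2} is the natural argument (Lax--Oleinik representation, mollification of $I$ to recover the standard comparison principle, stability of viscosity solutions), and your remark that $I$ enters only through a time integral, hence is insensitive to null-set modifications, is exactly the point. For part~\textbf{1} there is a genuine gap. In the unordered case you propose to control $|x_1^*(t)-x_2^*(t)|$ by $\|u_1(t,\cdot)-u_2(t,\cdot)\|_{L^\infty}$ ``via the coercivity of the $u_i$''. Coercivity only confines the minimizers to a fixed compact set; it does not make the map $u\mapsto\operatorname{argmin}u$ continuous in $L^\infty$. Semiconcavity yields an \emph{upper} quadratic bound near a minimizer, never a lower one, so $\operatorname{argmin}$ can be arbitrarily unstable even when $\|u_1-u_2\|_{L^\infty}$ is small (picture two nearly flat profiles whose minima sit far apart). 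Your chain
\[
|I_1(t)-I_2(t)|\ \lesssim\ |x_1^*(t)-x_2^*(t)|\ \lesssim\ \|u_1-u_2\|_\infty\ \lesssim\ \int_0^t|I_1-I_2|
\]
therefore breaks at the middle link, and Gronwall cannot close. The repair is already present in your treatment of the ordered case: stay at the level of actions and trajectories rather than minimizers. Using an optimal curve for $u[I_1]$ as a competitor in the variational formula for $u[I_2]$ (and symmetrically), evaluating at the minimizer of \emph{one} of the two solutions and combining with $\min u_i=0$, the strict monotonicity $\partial_I\Rcont\le -K_1^{-1}$ yields two-sided integral inequalities on $\int_0^t(I_1-I_2)_\pm$ from which a Gronwall argument closes, without ever needing stability of $\operatorname{argmin}$.
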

\begin{rmq}
\label{rmq:conjecture_unicite}
The uniqueness result for the solution of \eqref{eq:HJ} is \emph{a priori} only true when $\Rcont$ is independent of $t$, and when the previous assumptions are satisfied.
However, we will assume that it is still valid for more general $\Rcont$, with Lipschitz regularity with respect to $t$.
From a modelization point of view, this enables the environment to change in time.
 Proving that Theorem \ref{thm:CalvezLam2020} still holds when $\Rcont$ depends on $t$, with Lipschitz regularity would deserve a dedicated study.
As Lipschitz regularity is standard for time-dependency in Hamilton-Jacobi equations \cite{BarlesBook}, we believe %that
this conjecture likely to be true.
\end{rmq}

The analysis of a finite-differences scheme for a simplified \eqref{eq:HJ}, with $b\equiv 1$, $\Hcont(p)=|p|^2$, and $\Rcont$ independent of $t$, has been carried out in \cite{CalvezHivertYoldas2022}.
The goal of this paper is to generalize it, and to propose a framework for the numerical analysis of constrained Hamilton-Jacobi equations, such as \eqref{eq:HJ}.
When the parameters are regular enough, and when dealing with bounded solutions, the approximation of viscosity solutions of  Hamilton-Jacobi equations can be handled with finite-differences monotonic schemes, see \cite{CrandallLions, Souganidis}. Here,  the two main difficulties of the problem are hence the coercivity of $u^\tin$ \eqref{hyp:u0_coercive}, and  the lack of regularity of $I$.
As in \cite{CalvezHivertYoldas2022}, we use  Theorem \ref{thm:CalvezLam2020} to overcome separately these concerns, since it enables decoupling the Hamilton-Jacobi equation from its constraint. Indeed,
at the continuous level, one can consider $u$, the viscosity solution of \eqref{eq:HJ} with $I$ given as a source term, show separately that $\min  u(t,\cdot)=0$ for all $t$, and conclude that $(u,I)$ solves \eqref{eq:HJ}.
Following these considerations, we approximate the classical Hamilton-Jacobi part of the problem with a monotonic scheme which enjoys a discrete maximum principle \cite{CrandallLions, Souganidis}. The approximation of $I$ is treated with a nonlinear problem to solve, so that the constraint is satisfied. In this step, the monotony of $I\mapsto b(x,I) \Hcont(p)+\Rcont(t,x,I)$ yields stability, and counterparts the lack of regularity of $I$.

However, the generalization of \cite{CalvezHivertYoldas2022} to more general Hamiltonians brings additional difficulties, in particular when proving that the scheme preserves the semi-concavity of the solution of \eqref{eq:HJ} (see \cite{Evans}). This property is crucial for the stability of the scheme, and for the convergence of the approximation of $I$. It was easily satisfied in \cite{CalvezHivertYoldas2022}, but it may require much more precise estimates here.
We hence distinguish two classes of schemes, depending on their behavior when investigating the discrete semi-concavity of the solutions.
The scheme proposed in \cite{CalvezHivertYoldas2022} belongs to  the class designated as the \emph{flat setting}. In this case, the discrete Hamiltonian is positive, as is $\Hcont$, but it is not necessarily convex.  As they do not represent a particular difficulty, the semi-concavity estimates for this class of schemes can be established in  a general framework, so that the convergence of the schemes belonging to this class is true in any dimension, and even when adding a smooth dependency in $x$ to $\Hcont$. On the contrary, discrete Hamiltonians are convex but not necessarily positive in the \emph{convex setting}. This framework is natural, as the convexity of $\Hcont$ is a crucial hypothesis for the well-posedness of the continuous problem \eqref{eq:HJ}, but the discrete semi-concavity estimates are more intricate. In this case, we establish the convergence of the scheme only in dimension $1$, and we postpone the investigation of the generalization to a future work.

We then illustrate the construction of schemes for \eqref{eq:HJ}, by detailing the example where the Hamiltonian $\Hcont$ is defined by \eqref{eq:discussion_H_Sans1}.
For this particular case, we propose five approximations of \eqref{eq:HJ}, and we discuss their properties. We also come back to the biological model, and the $\ep$-dependent problem \eqref{eq:P_ep} that converges to \eqref{eq:HJ} in the limit $\ep\to 0$, and we propose and analyze an asymptotic-preserving scheme adapted to this asymptotics. Such schemes enjoy stability properties when $\ep\to 0$, meaning that they are accurate in all the regimes, with no constraint on the discretization parameters. In particular, we show with numerical tests that they catch the asymptotics  of the model, even when the population goes to an extinction, so that the asymptotic regime is no more described by \eqref{eq:HJ}.

The paper is organized as follows: the scheme for \eqref{eq:HJ} is constructed and discussed in Section \ref{sec:MainResults}. In addition, examples of schemes for the integral-defined Hamiltonian \eqref{eq:discussion_H_Sans1} are presented in Section \ref{sec:Example}, where an asymptotic-preserving scheme for \eqref{eq:P_ep} is also constructed and analyzed. The  convergence of the scheme for \eqref{eq:HJ} is then proved in Section \ref{sec:Convergence}. Various properties of the schemes are illustrated and discussed via numerical tests in Section \ref{sec:numericalsims}. Finally, technical points related to the asymptotic-preserving scheme of Section \ref{sec:Example} are proved in Appendix \ref{sec:APProof}.

% \medskip
% \noindent
% \textbf{Acknowledgements.}
\section*{Acknowledgements}
\label{sec:Acknowledgements}
\addcontentsline{toc}{section}{\nameref{sec:Acknowledgements}}
The authors warmly thank Vincent Calvez for
 the many very interesting discussions they had about this problem.
 They also thank Pierre Navaro, for his expertise with Julia.
This
project has received funding from the European Research Council (ERC) under the European Union’s Horizon
2020 research and innovation program (ERC consolidator grant WACONDY no 865711)

\section{Construction of the scheme and main results}
\label{sec:MainResults}

In this section, we present a general framework for the construction of schemes for \eqref{eq:HJ} in dimension $d=1$, which converge towards the viscosity solution of \eqref{eq:HJ}. This is a generalization of the scheme proposed in \cite{CalvezHivertYoldas2022} for the special case $\Hcont\left(p\right)=p^2$, and it is designed according to the following principles:
\begin{itemize}\pt
\item The time-dependent Lagrange multiplier $I$ in \eqref{eq:HJ} is implicited in the schemes, to make sure that the constraint $\min u=0$ in \eqref{eq:HJ} is preserved at the discrete level. 
 \item Following  \cite{CrandallLions, Souganidis},  $\Hcont$ is discretized with a monotonous  explicit scheme. However, in comparison to \cite{CrandallLions, Souganidis}, we relax the consistency hypothesis of the discrete Hamiltonian. Indeed, this assumption means that $\Hcont\left(p\right)$ can be computed without approximation for any $p\in\R$, and this is for instance not relevant when $\Hcont$ is defined as an integral, as in \eqref{eq:discussion_H_Sans1}.
To this end, we introduce a discretization parameter $\eta$, accounting for the quality of the approximation of $\Hcont:\R\longrightarrow\R$, and we denote $\dHe:\R^2\longrightarrow\R$ the approximated Hamiltonian. 
\item It may happen, that the net growth rate $\Rcont$ is not computed without approximation, especially when $\Hcont$ is modified as in Remark \ref{rmq:GeneralCase}. As previously, an approximation of $\Rcont$ is then introduced. Still denoting $\eta$ the discretization parameter, we define $\dRe:[0,T]\times\R^d\times \R\longrightarrow\R$ the approximated net growth rate.
\end{itemize}

According to these considerations, the properties of $\dHe$ and $\dRe$ are stated as follows. First of all, as in \cite{CrandallLions, Souganidis, CalvezHivertYoldas2022}, an appropriate discretization of $\Hcont$ should  be increasing in its first variable, and decreasing in the second one. However, this property can be slightly relaxed. Indeed, following the ideas of the Lax-Friedrichs scheme, one can add a viscosity to $\dHe$ to force the monotony in a given bounded set of $\R^2$. To allow for these discretizations in the framework of our study, we let $\LH>0$ and assume that
\begin{equation}
 \label{hyp:Heta_monotonous}
 \refstepcounter{assumption}
\tag{A\theassumption}
 \forall (p,q)\in[-\LH,\LH], \; \forall e>0, \; \dHe\left(p,q+e\right) \le \dHe\left(p,q\right)\le \dHe\left(p+e,q\right).
\end{equation}
We also suppose that no discretization error is committed in \eqref{hyp:H0}, meaning that 
\begin{equation}
\label{hyp:Heta_zero}
\refstepcounter{assumption}
\tag{A\theassumption}
\dHe\left(0,0\right)=0.
\end{equation}
Note that this assumption is purely technical, but simplifies several expressions in what follows. However, one could also avoid this restriction  by considering $\dHe - \dHe\left(0,0\right)$. For the same reason, we also assume that the discrete Hamiltonian $\dHe$ is non-negative on the diagonal
\begin{equation}
 \label{hyp:Heta_nonnegative}
 \refstepcounter{assumption}
\tag{A\theassumption}
 \forall \left|p\right|\le\LH, \; \forall\eta\in(0,1], \; \dHe\left(p,p\right)\ge 0.
\end{equation}
The quality of the approximation of $\Hcont$ by $\dHe$ is quantified as follows
\begin{equation}
 \label{hyp:Heta_approximation}
 \refstepcounter{assumption}
\tag{A\theassumption}
 \exists \CLH\ge 0, \;\forall |p|\le\LH, \; \forall \eta\in(0,1], \; \left|\dHe\left(p,p\right)-\Hcont\left(p\right)\right|\le\CLH\;\eta,
\end{equation}
and the approximation of $\Rcont$ by $\dRe$ is also quantified with $\eta$
\begin{equation}
 \label{hyp:Reta_approximation}
 \refstepcounter{assumption}
\tag{A\theassumption}
 \forall t\ge 0, \; \forall x\in\R, \; \forall I\ge 0,\; \forall \eta\in(0,1],\; |\Rcont\left(t,x,I\right)-\dRe\left(t,x,I\right)|\le K\eta,
\end{equation}
where the constant $K$ is used again, as in \eqref{hyp:R_K}, to simplify notations. In addition, we suppose that $\dRe$ satisfies \eqref{hyp:R_decreasing}-\eqref{hyp:R_ImIM}-\eqref{hyp:R_bounded} and \eqref{hyp:R_t-Lipschitz}, as does $\Rcont$.
Finally, we assume that $\dHe$ is smooth enough for a pseudo differential to be defined, and that the following function is well-defined, for all $L>0$
\begin{equation}
\label{eq:Heta_defCH}
\CHL(L)=b_M\;\sup_{\eta\leq 1}\;\sup_{|p|,|q|\leq L}\;\sup_{(d_p,d_q)\in\partial\dHe(p,q)}\;(|d_p|+|d_q|),
\end{equation}
where $b_M$ is defined in \eqref{hyp:b_bounds}. 
This function will be used for various stability conditions and error estimates.

Suppose that $\dHe$ and $\dRe$ enjoy the above properties, and 
let $T>0$, and the number $N_T$ of time steps be given. The time step is defined as $\dt=T/N_T$, and let $t^n=n\dt$ for $n\in\ccl 0,N_T\ccr$. The trait step is denoted $\dx>0$, and the grid is defined with $x_i=x_0+i\dx$ for all $i\in\Z$. For $n\in\ccl 0,N_T-1\ccr$ and $i\in\Z$, the scheme for \eqref{eq:HJ} is given by 
\begin{equation}
 \label{eq:scheme}
 \tag{$\mathcal{S}$}
 \left\{ 
\begin{array}{l}
\ds  \frac{u^{n+1}_i-u^n_i}{\dt}+b\left(x_i,I^{n+1}\right) \dHe\left( \frac{u^n_i-u^n_{i-1}}{\dx}, \frac{u^n_{i+1}-u^n_i}{\dx} \right) + \dRe\left(t^{n+1},x_i,I^{n+1}\right)=0 \vspace{4pt}\\
\min\limits_{j\in\Z} u^{n+1}_j = 0.
\end{array}
 \right.
\end{equation}
It is initialized with
\begin{equation}
\label{eq:defbu0}
u^0_i=u^\tin(x_i),
\end{equation} 
for all $i\in\Z$, and it is such that  $\min_{j\in\Z} u^0_j=0$, thanks to \eqref{hyp:u0_minimum}.
\begin{defi}
\label{def:WellChosenScheme}
A scheme is \emph{$\LH$-well chosen} if $\dRe$ satisfies \eqref{hyp:R_decreasing}-\eqref{hyp:R_ImIM}-\eqref{hyp:R_bounded}-\eqref{hyp:R_t-Lipschitz}, and \eqref{hyp:Reta_approximation}, if $\dHe$ satisfies \eqref{hyp:Heta_monotonous},  \eqref{hyp:Heta_zero}, \eqref{hyp:Heta_nonnegative}, \eqref{hyp:Heta_approximation},
is smooth enough for \eqref{eq:Heta_defCH} to be well-defined and finite for all $0\le L\le \LH$, and if one one the following conditions holds:
\begin{itemize}\pt
 \item \emph{Convex setting}: $\dHe$ is convex on $[-\LH,\LH]^2$,
 \item \emph{Flat setting}: for all $\eta \in (0,1]$ and $p,q$ such that $-\LH\leq p\leq 0\leq q\leq \LH$, $\dHe\left(p,q\right)=0$.
\end{itemize}
\end{defi}

\begin{rmq}
 It is possible for a scheme to be $\LH$-well chosen for all $\LH$. In that case, it will simply be said that it is well-chosen.
\end{rmq}

\begin{rmq}
 When in the flat setting, \eqref{hyp:Heta_nonnegative} is automatically satisfied. Indeed, if $p\ge0$ we have $0= \dHe\left(0,p\right)\leq\dHe\left(p,p\right)$, and similarly if $p<0$.
\end{rmq}

\begin{rmq}
 One can notice that the flat and convex settings are not mutually exclusive.
 %The convex setting is close to the convexity assumption \eqref{hyp:H_convex_superlinear} stated for $\Hcont$ in the continuous problem \eqref{eq:HJ}, but the flat setting makes the discretization simpler to handle with.
 Moreover, it is always possible, when in the convex setting, to build a scheme that is also in the flat setting. Indeed,
  since  $\Hcont\left(p\right)\ge0$,  one can consider  $\max\left(\dHe\left(p,q\right),0\right)$.
\end{rmq}

Focus now on the properties of the trait-time mesh and the choice of $\eta$.
Since  an explicit discretization for the
derivative of $u$ with respect to trait $x$ in
\eqref{eq:HJ} is chosen, stability conditions (CFL) must be satisfied. They allow the monotony of the numerical scheme in most results. It assumes
\begin{equation}
\label{eq:CFL1}
\dt \CHL(\Lin+KT)\leq \dx,
\;\;\mathrm{and}\;\; \dt \CHL(\oa+KT)\le \dx,
\end{equation}
where $\Lin$ is defined in \eqref{hyp:u0_Lipschitz}, $\oa$ in \eqref{hyp:u0_coercive}, and $K$ in \eqref{hyp:R_K}. Note that $\dt\CHL(\ua+KT)\le \dx$ is then automatically satisfied since $\ua\le\oa$, where $\ua$ is also defined in \eqref{hyp:u0_coercive}.
However,  a stronger condition is needed to conclude the proof of the convergence of \eqref{eq:scheme} towards the viscosity solution of \eqref{eq:HJ},
\begin{equation}\label{eq:CFL2}
\dt \CHL(14(\Lin+KT)+1)\leq \dx,
\end{equation}
so that 
\begin{equation}
\label{eq:Linf}
\Linf=14 (\Lin+KT)+1, \;\;\text{and}\;\;\Ch=\CHL(\Linf),
\end{equation}
are defined, and that
the ratio $\dx/\dt$ is fixed, such that
\begin{equation}
\label{eq:CFLsatisfier}
\dt\max\left(2\CHL(\oa+KT),\;2\CHL(\Lin+KT),\; \Ch\right)\;\le\;\dx.
\end{equation}
Note that in most cases, the maximum of the two last items is $\Ch$, and that this condition is restrictive.
However, it can be relaxed in practice, see Section \ref{sec:cflgest}.
Any monotonous function of $\dt$ that tends to $0$ when $\dt\to 0$ can be chosen to define $\eta$. In what follows,
\begin{equation}
\label{eq:etachoice}
\eta=\min(\sqrt{\dx},1),
\end{equation}
will be considered, as this choice simplifies the estimates in the convergence proof.
The cut-off for $\dx>1$ is introduced  to use the definition of $\CHL(L)$ in \eqref{eq:Heta_defCH}.
Finally, a restriction on the discretization step is necessary, that is
\begin{equation}\label{eq:smooth_R}
\dx \le \frac{1}{Kw_T\CHL(\Lin+KT)},\;\;\mathrm{and}\;\; \dx\leq\frac{4}{(e^T+1)^2},
\end{equation}
with 
\begin{align}
 w_T&= \e^{\beta T} \left(\gamma T+ \sup\limits_{x\in\R} \uin''(x)\right)
 \label{eq:ProofExistence_wT}
 \\
 \gamma &= K + \bM  \max\left\{-\dHe\left(-\Lin-KT,\Lin+KT\right), \dHe\left(\Lin+KT,-\Lin-KT\right)\right\} \label{eq:ProofExistence_alpha}
 \\
 \beta&=4  \CHL(\Lin+KT) . \label{eq:ProofExistence_beta}
\end{align}
Note that these assumptions are mostly technical. Indeed,  removing the first bound requires slightly more regularity on $\Rcont$, as $I_m-1$ must then be lowered in \eqref{hyp:R_ImIM}. The second one gives rise to the $1$ in the definition of $\Linf$ in \eqref{eq:Linf}.

Let us now introduce the notion of an adapted discretization,  to refer to a discretization $\dt$, $\dx$, $\eta$  satisfying these constraints:
\begin{defi}\label{def:adapted}
A triplet $(\dt,\dx,\eta)$ is \emph{adapted to $\dHe$ and $\dRe$} if $(\dHe,\dRe)$ is a $\Linf$-well chosen scheme, and  \eqref{eq:CFLsatisfier}, \eqref{eq:etachoice}, \eqref{eq:smooth_R} are satisfied.
\\
A discretization $\dHe$, $\dRe$, $\dt$, $\dx$, $\eta$ is \emph{adapted} if $(\dt,\dx,\eta)$ is adapted to $(\dHe,\dRe)$.
\end{defi}

\begin{rmq}
$(\dHe,\dRe)$ is assumed to be $\Linf$-well chosen to ensure that  $\Ch$ is well-defined.
\end{rmq}

As scheme \eqref{eq:scheme} is implicit, let us start by stating an existence result, along with qualitative properties on $\bu=(u^n_i)_{n\in\ccl0,N_T\ccr, i\in\Z}$ and on $\bI=(I^{n+1})_{n\in\ccl0,N_T-1\ccr}$.

\begin{prop}[Scheme \eqref{eq:scheme}: existence of solutions and qualitative properties]
\label{thm:existence}
Let $\dHe$, $\dRe$, $\dt$, $\dx$, $\eta$ be an adapted discretization (Definition \ref{def:adapted}).
Then, scheme \eqref{eq:scheme} is well-posed: there exists $\bu\in\R^{\Z\times\ccl1,N_T \ccr},$ and $\bI\in\R^{\ccl1,N_T \ccr},$ satisfying \eqref{eq:scheme}, where $\bu^0=(u^0_i)_{i\in\Z}$. Moreover $\bu$ and $\bI$ satisfy the following properties:

\begin{enumerate}
\item\label{it:Lx}  \textbf{Lipschitz property:} $\bu^n$ is $\Lxn$-Lipschitz, 
\begin{equation}\label{eq:disc_lipschitz}
\forall n\in\ccl0,N_T\ccr,\;\forall (i,j)\in\Z^2, \;|u_i^n-u_j^n|\le \Lxn \dx|i-j|,
\end{equation}
where $\Lx(t)=\Lin+tK$, and with $\Lin$ defined in \eqref{hyp:u0_Lipschitz}, and $K$ in \eqref{hyp:R_K}.
\item\label{it:Lt} \textbf{Bounds for $\bu^n$:} let $\ub_t=\ub-tb_M\dHe\left(\ua,-\ua\right)-tK$, $\ob_t=\ob-tb_M\dHe\left(-\oa,\oa\right)+tK$, where $\ua$, $\oa$, $\ub$, $\ob$ are defined in \eqref{hyp:u0_coercive} and $b_M$ in \eqref{hyp:b_bounds}. Then
\begin{equation}\label{eq:discbounds}
\forall n\in\ccl0,N_T\ccr,\;\forall i\in\Z,\; \ua|x_i|+\ub_\tn\leq u_i^n\leq \oa|x_i|+\ob_\tn.
\end{equation}
\item\label{it:Rpos} Let $n\in\ccl1,N_T\ccr$ and $j\in\Z$ such that $u_j^n=0$, then
\begin{equation}\label{eq:Rpositif}
\dRe\left(t^n,x_j,I^n\right)\ge 0.
\end{equation}
\item \label{it:I_bound} \textbf{Bounds for $\bI$:} for all $n\in\ccl 1,N_T\ccr$, $I_m-1\le I^n\le I_M$, where $I_m$ and $I_M$ are defined in \eqref{hyp:R_ImIM}.
\item \label{it:I_BVbound} \textbf{Bound from below for the jumps of $\bI$:} there exists a constant $\kappa>0$ such that for all $n\in\ccl1,N_t-1\ccr$, 
\begin{equation}
\label{eq:I_BVbound}
I^{n+1}-I^n\ge -\kappa\dt.
\end{equation}
Moreover, in the flat setting, \details{$\kappa=K\LR$} $\kappa=K^2$ , with $K$ defined in \eqref{hyp:R_K}.
In the convex setting, \details{$\kappa=K\LR + K\dx\;w\left(T\right)\; \CHL\left(\Lx\left(T\right)\right)$} $\kappa=K^2 + K\dx\;w\left(T\right)\; \CHL\left(\Lx\left(T\right)\right)$ suits, with $\CHL$ defined in \eqref{eq:CFLsatisfier},  $\Lx$ in item \ref{it:Lx},  and $w$ in item \ref{it:sc}.
\item\label{it:sc}\textbf{Semi-concavity:} in the convex setting,
there exists a function $w:[0,T]\to \R$ such that for all $t\in[0,T]$,
\begin{equation}
\label{eq:w_bound}
 w\left(t\right)\le \e^{\beta t} \left(\gamma t+ \sup\limits_{x\in\R} \uin''(x)\right),
\end{equation}
with $\beta$, $\gamma$ defined in \eqref{eq:ProofExistence_beta}-\eqref{eq:ProofExistence_alpha},
and such that
for all $n\in\ccl0,N_T\ccr$, and for all $i\in\Z$
\begin{equation}\label{eq:semiconcavity}
\frac{u_{i-1}^n-2u_{i}^n+u_{i+1}^n}{\dx^2}\leq  \wn.
\end{equation}
\item\label{it:pseudomon} \textbf{Bound from below for $\dHe$:} for all $n\in\ccl0,N_T\ccr$, and for all $i\in\Z$,
\begin{equation}\label{eq:Hbounded}
\dHe\left(\frac{u_i^n-u_{i-1}^n}{\dx},\frac{u_{i+1}^n-u_{i}^n}{\dx}\right)\ge-\dx \CHL(\Lx(\tn))\wn/\bM,
\end{equation}
with $\CHL$ defined in \eqref{eq:Heta_defCH}, $\Lx$ in item \ref{it:Lx}, $w$ in \ref{it:sc} and $\bM$ in \eqref{hyp:b_bounds}.
\end{enumerate}
Moreover, $\bI$ is uniquely determined if 
\begin{equation}
 \label{eq:ConditionForUniqueness}
 \Lb \dHe\left(-\Lx(T),\Lx(T)\right)+K^{-1}\ge 0,
\end{equation}
where $\Lb$ is defined in \eqref{hyp:b_lipschitz}, $\Lx$ in item \ref{it:Lx} and $K$ in \eqref{hyp:R_K}.
\end{prop}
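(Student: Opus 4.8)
The plan is to establish the statement of Proposition~\ref{thm:existence} by an induction on the time step $n$. At each step, one assumes that $\bu^n$ is known and satisfies the Lipschitz property \ref{it:Lx}, the bounds \ref{it:Lt}, the semi-concavity \ref{it:sc} and the lower bound \ref{it:pseudomon}, and one constructs $(u_i^{n+1})_{i\in\Z}$ and $I^{n+1}$ solving \eqref{eq:scheme}. The base case $n=0$ is immediate from \eqref{eq:defbu0} and the assumptions \eqref{hyp:u0_Lipschitz}, \eqref{hyp:u0_coercive}, \eqref{hyp:u0_minimum} on $u^\tin$ (note $\sup_x (\uin)''$ enters \ref{it:sc}; here we implicitly assume $\uin$ is semi-concave, which is guaranteed by \eqref{eq:smooth_R}–\eqref{eq:ProofExistence_wT}).

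\medskip

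The core of the induction step is the resolution of the implicit equation for $I^{n+1}$. Define, for $I\in\R$, the candidate update
\[
U_i(I)=u^n_i-\dt\, b(x_i,I)\,\dHe\!\left(\tfrac{u^n_i-u^n_{i-1}}{\dx},\tfrac{u^n_{i+1}-u^n_i}{\dx}\right)-\dt\,\dRe(t^{n+1},x_i,I),
\]
and set $\Phi(I)=\min_{j\in\Z}U_j(I)$. By the monotonicity of $\dHe$ \eqref{hyp:Heta_monotonous} and $\dRe$ \eqref{hyp:R_decreasing}–\eqref{hyp:b_decreasing} (combined via the remark that $I\mapsto b(x,I)\dHe(p,q)+\dRe(t,x,I)$ is non-increasing on the relevant range), $I\mapsto U_i(I)$ is non-decreasing, hence so is $\Phi$; using the discrete Lipschitz bound \ref{it:Lx} one also checks $\Phi$ is continuous, and the sign conditions in \eqref{hyp:R_ImIM} for $\dRe$ force $\Phi(I_m)\le 0\le\Phi(I_M)$ (up to the $\eta$-corrections from \eqref{hyp:Reta_approximation}, which is why the range $I_m-1$ appears). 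Thus there exists $I^{n+1}\in[I_m-1,I_M]$ with $\Phi(I^{n+1})=0$, yielding \ref{it:I_bound}, and setting $u_i^{n+1}=U_i(I^{n+1})$ makes $\min_j u_j^{n+1}=0$. Uniqueness of $I^{n+1}$ holds when $\Phi$ is strictly increasing, which is exactly guaranteed by \eqref{eq:ConditionForUniqueness}. Items \ref{it:Lx} and \ref{it:Lt} then propagate: the CFL condition \eqref{eq:CFLsatisfier} (through $\CHL$ in \eqref{eq:Heta_defCH}) makes the map $(u^n_{i-1},u^n_i,u^n_{i+1})\mapsto U_i$ monotone in each argument, so it is a contraction for the Lipschitz seminorm up to the $K\dt$ growth coming from $\dRe$; the coercivity bounds follow similarly by comparison with the explicit affine barriers $\pm(\oa|x_i|+\ob_{\cdot})$. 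Item \ref{it:Rpos} is read off directly from \eqref{eq:scheme} at an index $j$ realizing $u_j^{n+1}=0$: there $u_j^{n+1}-u_j^n\le 0$ and $\dHe(\cdot,\cdot)\ge 0$ on the diagonal-type configuration forced by semi-concavity together with \ref{it:pseudomon}, so $\dRe(t^{n+1},x_j,I^{n+1})\ge 0$ — this is where the sign of $\dHe$ enters and where the two settings diverge.

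\medskip

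\textbf{The main obstacle is the discrete semi-concavity estimate \ref{it:sc}} (and the resulting lower bound \ref{it:pseudomon}), in the convex setting. The plan is to apply the scheme to the second difference $D_i^n=(u^n_{i-1}-2u^n_i+u^n_{i+1})/\dx^2$ and derive a discrete differential inequality of the form $D_i^{n+1}\le (1+\beta\dt)\max_j D_j^n + \gamma\dt$, whose constants $\beta,\gamma$ are precisely \eqref{eq:ProofExistence_beta}–\eqref{eq:ProofExistence_alpha}; a discrete Grönwall argument then gives \eqref{eq:w_bound}. The delicate point is controlling the nonlinear contribution of $\dHe$: one must Taylor-expand $\dHe$ around $(p_i,q_i)=(\tfrac{u^n_i-u^n_{i-1}}{\dx},\tfrac{u^n_{i+1}-u^n_i}{\dx})$ and use \emph{convexity} of $\dHe$ on $[-\Linf,\Linf]^2$ to absorb the second-order terms with a favorable sign, while the first-order terms are bounded using the pseudo-differential bound \eqref{eq:Heta_defCH}; the monotonicity hypothesis \eqref{hyp:Heta_monotonous} is needed to keep the convex combination of $D^n$-values admissible, and the CFL \eqref{eq:CFLsatisfier} guarantees all arguments stay in $[-\Linf,\Linf]$. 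Once \ref{it:sc} is in hand, \ref{it:pseudomon} follows by rewriting \eqref{eq:scheme} as a bound on $\dHe(p_i,q_i)$ from below using $u_i^{n+1}-u_i^n\le$ (something controlled by $w$ and the Lipschitz constant), and \ref{it:I_BVbound} follows by testing \eqref{eq:scheme} at two consecutive times at minimizing indices and invoking the strict monotonicity margin $K^{-1}$ from \eqref{hyp:R_decreasing} together with \ref{it:pseudomon} to control the extra $\dHe$ term — in the flat setting this extra term vanishes and $\kappa=K^2$, whereas in the convex setting it contributes the $K\dx\,w(T)\,\CHL(\Lx(T))$ correction.
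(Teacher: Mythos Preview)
Your overall architecture (induction on $n$, define $U_i(I)$ and $\Phi(I)=\min_i U_i(I)$, solve $\Phi=0$, then propagate Lipschitz/coercivity by monotonicity, and prove semi-concavity via convexity of $\dHe$) matches the paper. The description of \ref{it:sc} is essentially correct in spirit, though the paper uses subgradient inequalities and a case split on the sign of $q^n_{i+1}-q^n_i$ rather than a Taylor expansion. However, there are three genuine gaps.

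\textbf{Existence of $I^{n+1}$.} You assert that $I\mapsto U_i(I)$ is non-decreasing because $I\mapsto b(x_i,I)\dHe(p_i,q_i)+\dRe(\cdot,\cdot,I)$ is non-increasing. This is false in the convex setting: when $\dHe(p_i,q_i)<0$ (which can happen, and is exactly what \ref{it:pseudomon} quantifies), the product $b(x_i,I)\dHe(p_i,q_i)$ is non-decreasing in $I$ and may overpower the $\dRe$ term. The condition \eqref{eq:ConditionForUniqueness} is precisely what is needed for monotonicity; without it even non-strict monotonicity of $\Phi$ can fail. The paper does not claim $\Phi$ is monotone: existence is obtained by showing $\phi(I)\to\pm\infty$ as $I\to\pm\infty$ (via $\dRe$ linearly decreasing in $I$) and applying the intermediate value theorem. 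Relatedly, the bound $I^{n+1}\ge I_m-1$ is not due to ``$\eta$-corrections'' from \eqref{hyp:Reta_approximation}; it comes from the $\dx\,w(\tn)\,\CHL(\Lxn)$ contribution of the possibly negative $\dHe$ at the minimizer of $\bu^n$, combined with \eqref{eq:smooth_R}.

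\textbf{Property \ref{it:Rpos}.} Your argument reads the scheme at a minimizer $j$ of $\bu^{n+1}$ and claims $\dHe\ge 0$ there by a ``diagonal-type configuration forced by semi-concavity''. But at such a $j$ the slopes $(p_j,q_j)$ are those of $\bu^n$, not of $\bu^{n+1}$, and there is no reason they satisfy $p_j\le 0\le q_j$; property \ref{it:pseudomon} only gives $\dHe(p_j,q_j)\ge -\dx\,\CHL w/\bM$, which is not enough to conclude $\dRe\ge 0$. The paper instead compares $\bu^n\ge 0$ with the constant sequence $0$ under the monotone scheme: since $\dHe(0,0)=0$ by \eqref{hyp:Heta_zero}, this yields $u^{n+1}_i\ge -\dt\,\dRe(\tnp,x_i,I^{n+1})$ for all $i$, and evaluating at the minimizer of $\bu^{n+1}$ gives \ref{it:Rpos} and $I^{n+1}\le I_M$ directly.

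\textbf{Property \ref{it:pseudomon}.} This is not obtained by ``rewriting the scheme as a bound on $\dHe$ from below using $u_i^{n+1}-u_i^n$''. It follows directly from \ref{it:sc}: writing $q_i=(u^n_{i+1}-u^n_i)/\dx\le p_i+\dx\,w(\tn)$ with $p_i=(u^n_i-u^n_{i-1})/\dx$, the monotonicity \eqref{hyp:Heta_monotonous} and a convexity inequality give $\dHe(p_i,q_i)\ge \dHe(p_i,p_i)-\dx\,w(\tn)\,\CHL(\Lxn)/\bM$, and one concludes with \eqref{hyp:Heta_nonnegative}.
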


\begin{rmq}\label{rem:worstcaseisok}
In the flat setting,  \ref{it:sc} is \emph{a priori} not true. However \ref{it:pseudomon} holds, since $ \dHe\left(p,q\right)\geq0$ for all $p$ and $q$. This property is a consequence of the flat setting definition (Definition \ref{def:WellChosenScheme}),  and of \eqref{hyp:Heta_monotonous}.
\end{rmq}

\begin{rmq}
As a consequence of Remark \ref{rem:worstcaseisok}, it is worth noticing that \eqref{eq:ConditionForUniqueness} is automatically satisfied in the flat setting. In the convex setting, $\dHe\left(-\Lx\left(T\right),\Lx\left(T\right)\right)$ may be non-positive, so that condition \eqref{eq:ConditionForUniqueness} means that $\dRe$ must be sufficiently decreasing w.r.t. $I$ in comparison to $b$, to have $I^{n+1}$ uniquely determined by \eqref{eq:scheme}. We emphasize on the fact that \eqref{eq:ConditionForUniqueness} is a sufficient condition, and that it seems that in practice \eqref{hyp:b_decreasing}-\eqref{hyp:R_decreasing} are enough, see Section \ref{sec:ProofExistence_conclusion}.
\end{rmq}

\begin{rmq}
 Property \ref{it:sc} is the discrete equivalent of a classical result for Hamilton-Jacobi equations: they preserve an upper bound on the second derivative, see \cite{Evans}. As in the continuous case, the convexity of $\dHe$ is a crucial assumption for this result.
\end{rmq}

\begin{rmq}
 When $\Rcont$ does not depend on $t$, the time-dependent Lagrange multiplier $I$ in \eqref{eq:HJ} is expected to be a non-decreasing function in $BV(0,T)$, see \cite{CalvezLam2020}. In the simpler case studied in \cite{CalvezHivertYoldas2022}, this property was conserved at the discrete level. Here, it is in fact still true in the flat setting, but the convex setting is not enough to have the monotony of $\bI$, see Section \ref{sec:TestsNums_MonotonyI}. However the bound \eqref{eq:semiconcavity} gives an estimate on the decreasing part.
\end{rmq}

\begin{rmq}
Even when it is not expected to be non-decreasing, $I$ defined in \eqref{eq:HJ} is in $BV(0,T)$. Moreover, no more regularity is to be expected, since it can have jumps, see Section \eqref{sec:TestsNums_MonotonyI}. However, the bound from below for the jumps of $\bI$ in \ref{it:I_BVbound} yields that the decreasing jumps cannot be of order larger than $\dt$. It suggests that, in the continuous case \eqref{eq:HJ}, $I$ can only have increasing jumps, or, equivalently, that it can only decrease continuously.
\end{rmq}

Prop. \ref{thm:existence} strongly relies on monotony properties of scheme \eqref{eq:scheme}, it
is proved in Section \ref{sec:ProofExistence}.
 Let us now focus on the convergence of these approximate solutions $(\bu,\bI)$ towards the unique viscosity solution of \eqref{eq:HJ}.
 As in \cite{CalvezHivertYoldas2022}, the proof of this convergence uses compactness arguments, so that we need to pass from the discrete solutions $(\bu,\bI)$ of \eqref{eq:scheme}, to the solutions $(u,I)$ of \eqref{eq:HJ}. To that extend, let us introduce $\Jdt$ the constant by part reconstruction of $\bI$. Formally:
 \begin{equation}\label{eq:defJdt}
 \forall n\in\ccl 0, N_T-1\ccr,\; \forall s\in (0,\dt], \;\Jdt(\tn+s):=I^{n+1}, \;\;\text{and}\;\; \Jdt(0)=I^1.
 \end{equation}
 Note that the choice of $\Jdt(0)$ is made only for technical purposes and does not bear any biological signification.
For the extension of $\bu$, let us introduce for all $s\in(0,\dt]$, and  for all $f:\R\longrightarrow\R$, the operator $\MsJ$
 \begin{equation}
 \label{eq:MsJ}
\forall x\in\R,\; \forall I\in\R,\;  \MsJ(f) (x)= f(x)-s\;b\left(x,I\right)\dHe\left(\frac{f(x)-f(x-\dx)}{\dx},\frac{f(x+\dx)-f(x)}{\dx}\right),
 \end{equation}
and let us define $(t,x)\mapsto\udt (t,x)$ on $[0,T]\times\R$, such that for all $n\in\ccl 0,N_T-1\ccr$, $s\in(0,\dt]$ and $x\in\R$,
\begin{equation}
\label{eq:defudt}
\udt(\tn+s,x)=\MsJnp(\udt(\tn,\cdot))(x)-s\; \dRe\left(t^n+s,x,\Jdt(\tn+s)\right),
\end{equation}
supplemented by the initial condition
$\udt(0,\cdot)=\uin$.
One can notice that for all $n\in\ccl 0, N_T\ccr,i\in\Z$,
\begin{equation}
\label{eq:udt_init}
\udt(\tn,x_i)=u_i^n,
\end{equation}
that is, that the approximate solution corresponds with the scheme on the grid. If $\dRe$ does not depend on $t$, one may also notice that, for all $x\in\R$, $\udt(\cdot,x)$ is piecewise linear.
The following result states the convergence of $(\udt,\Jdt)$  defined in \eqref{eq:defudt}-\eqref{eq:defJdt} towards the viscosity solution of \eqref{eq:HJ}. %It is proved in Section \ref{sec:ProofConvergence}.

\begin{prop}[Scheme \eqref{eq:scheme} : convergence]
\label{thm:limitconv}
Let $(\dHe,\dRe)$ be a well chosen scheme (Def. \ref{def:WellChosenScheme}), and $(\dt^n,\dx^n,\eta^n)$ be a sequence of discretizations adapted to $(\dHe,\dRe)$ (Def. \ref{def:adapted}) with $\dt^n\to0$ when $n\to\infty$. Let $(\udtn,\Jdtn)_n$ defined as in \eqref{eq:defudt}-\eqref{eq:defJdt}-\eqref{eq:udt_init}. Then, the following results hold:
\begin{itemize}\pt
\item $\udtn$ converges locally uniformly towards $u\in\Cc([0,T],\R)$, i.e. for all compact $\Omega$ of $\R$
\[
\|\udtn-u\|_{L^\infty([0,T]\times \Omega)}\underset{n\to+\infty}\longrightarrow 0,
\]
where $u$ is defined as the viscosity solution of \eqref{eq:HJ}.
\item $\Jdtn$ converges pointwise on $(0,T)$ towards $\Jl\in BV(0,T)$, which satisfies
\[
I_m\leq \Jl\leq I_M,
\]
and $\Jl=I$ almost everywhere on $(0,T)$, where $I$ is defined as the Lagrange-multiplier in \eqref{eq:HJ}.
\end{itemize}
\end{prop}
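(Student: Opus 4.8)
The plan is to obtain convergence as a consequence of the uniform bounds in Proposition~\ref{thm:existence}, combined with a half-relaxed-limits / consistency argument à la Barles--Souganidis, and then to recover the constraint in the limit. First I would establish compactness for $\udtn$: by item~\ref{it:Lx} the functions $\udtn(\tn,\cdot)$ are uniformly Lipschitz in $x$ with constant $\Lx(T)=\Lin+KT$ on $[0,T]$, and the defining relation \eqref{eq:defudt} together with the bounds on $\dHe$ (through $\CHL(\Lx(T))$) and on $\dRe$ gives a uniform modulus of continuity in $t$; hence $\udtn$ is equi-Lipschitz on $[0,T]\times\Omega$ for every compact $\Omega$, and the coercivity bounds \eqref{eq:discbounds} pin down the pointwise values. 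By Arzel\`a--Ascoli, up to a subsequence $\udtn\to u$ locally uniformly with $u$ Lipschitz. For $\Jdtn$, items~\ref{it:I_bound} and~\ref{it:I_BVbound} give a uniform $L^\infty$ bound and a one-sided bound $I^{n+1}-I^n\ge-\kappa\dt$ on the jumps, which turns $\Jdtn$ into a sequence uniformly bounded in $BV(0,T)$ (the total variation of the increasing part is controlled by the $L^\infty$ bound, the decreasing part by $\kappa T$). Helly's selection theorem then yields a further subsequence converging pointwise on $(0,T)$ to some $\Jl\in BV(0,T)$ with $I_m-1\le \Jl\le I_M$; the sharper bound $I_m\le\Jl$ will follow from item~\ref{it:Rpos} passed to the limit, as explained below.

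Next I would identify $u$ as the viscosity solution of the decoupled Hamilton--Jacobi equation \eqref{eq:HJSansContrainte} with source $\Jl$. This is the standard monotone-consistent-stable scheme argument: the CFL condition \eqref{eq:CFLsatisfier} makes the one-step map $\MsJnp$ monotone on the relevant Lipschitz range, \eqref{hyp:Heta_approximation} and \eqref{hyp:Reta_approximation} together with $\eta=\min(\sqrt{\dx},1)\to0$ give consistency of the scheme with $\partial_t u + b(x,I)\Hcont(\nabla u)+\Rcont$, and the uniform bounds give stability; testing against a smooth function touching $u$ from above/below at an interior point and using the pointwise convergence of $\Jdtn$ (plus the continuity of $b$ and $\Rcont$ in $I$ at points where $\Jl$ is continuous, which is all but countably many $t$) yields the viscosity sub- and supersolution inequalities. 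Then invoke Theorem~\ref{thm:CalvezLam2020}(2) (extended to $t$-dependent $\Rcont$ per Remark~\ref{rmq:conjecture_unicite}): $u$ is \emph{the} locally Lipschitz viscosity solution of \eqref{eq:HJSansContrainte} with this source, independent of the chosen $BV$ representative of $\Jl$.

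It then remains to check the constraint $\min_x u(t,x)=0$ for all $t$, which by Theorem~\ref{thm:CalvezLam2020}(1) will identify $(u,\Jl)$ as the unique solution $(u,I)$ of \eqref{eq:HJ} and force $\Jl=I$ a.e. Since $\min_j u^{n+1}_j=0$ for every $n$ by \eqref{eq:scheme}, and since the $\udtn(\tn,\cdot)$ are uniformly coercive (bounded below by $\ua|x|+\ub_{\tn}$ with $\ub_{\tn}\ge\ub-Tb_M\dHe(\ua,-\ua)-TK$ uniformly), the minima are attained on a fixed compact set; local uniform convergence then gives $\min_x u(t,\cdot)=0$, first along the time grid and then for all $t$ by continuity of $u$ in $t$. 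For the lower bound $\Jl\ge I_m$: if at some continuity point $t$ of $\Jl$ one had $\Jl(t)<I_m$, then for large $n$ one would have $I^{n+1}<I_m$ at nearby grid times, but item~\ref{it:Rpos} says $\dRe(\tn,x_j,I^n)\ge0$ at a minimizer $x_j$, and $\dRe\to\Rcont$ uniformly while $\Rcont(\cdot,\cdot,I)<\Rcont(\cdot,\cdot,I_m)$ fails to be $\ge0$ strictly below $I_m$ only if $\Rcont(\cdot,\cdot,I_m)=0$ there — so one argues that the minimizer sits where $\Rcont$ would be positive, contradicting \eqref{eq:scheme}; combined with $\Jl\le I_M$ this gives $I_m\le\Jl\le I_M$. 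Finally, a diagonal argument removes the subsequence: any subsequence has a sub-subsequence converging to a solution of \eqref{eq:HJ}, which is unique, so the whole sequence converges.

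The main obstacle I expect is the passage to the limit in the source term $\Jdtn$ inside the nonlinearities $b(x_i,I^{n+1})$ and $\dRe(\tn,x_i,I^{n+1})$: since $\Jl$ is only $BV$ and $\Jdtn$ converges merely pointwise (not uniformly), one must be careful that the consistency/viscosity-solution test is performed at a.e.\ time, exploiting that $\Jl$ has at most countably many discontinuities and that by Theorem~\ref{thm:CalvezLam2020}(2) the limiting $u$ does not see the values of $\Jl$ on a null set; making this rigorous — i.e.\ showing that the half-relaxed limits of $\udtn$ are respectively a sub- and a supersolution of \eqref{eq:HJSansContrainte} with source $\Jl$ despite the discontinuities — is the delicate technical heart, and is presumably where the bulk of Section~\ref{sec:Convergence} goes.
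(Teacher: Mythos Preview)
Your compactness step and your endgame are essentially those of the paper: Lemma~\ref{lem:extraction} is exactly your Arzel\`a--Ascoli/Helly extraction, and the final paragraph of Section~\ref{sec:ident} is exactly your ``check $\min u=0$, invoke uniqueness, drop the subsequence''. The sharper bound $\Jl\ge I_m$ is obtained in the paper more directly than you suggest, from the quantitative estimate~\eqref{eq:Minoration_Inp1_dx}, which gives $I^{n+1}\ge I_m-C\dx$ and hence $\Jl\ge I_m$ in the limit; your route via item~\ref{it:Rpos} would also work but is less clean.

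Where you diverge from the paper is precisely at the step you flag as the obstacle. You propose a direct Barles--Souganidis half-relaxed-limits argument, handling the $BV$ source by testing only at continuity points of $\Jl$. The paper does \emph{not} do this. Instead (Section~\ref{ssec:smoothJ}) it introduces $k$-Lipschitz inf/sup regularizations $\Jdtkm\le\Jdt\le\Jdtkp$ and $\Jlkm\le\Jl$, $\Jlkp\ge\Jlp$, which crucially satisfy $\|\Jdtkm-\Jlkm\|_\infty\to0$ and $\|\Jdtkp-\Jlkp\|_\infty\to0$ as $\dt\to0$ \emph{for fixed $k$} (Lemma~\ref{lem:imporvedconvergence}). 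It then defines auxiliary numerical solutions $\vdtk,\wdtk$ and viscosity solutions $\vk,\wk$ with these Lipschitz sources, so that by monotonicity $\vdtk\le\udt\le\wdtk$ (Lemma~\ref{lem:propofregularizedsolutions}\ref{it:approximation_inequality}). A Crandall--Lions doubling-of-variables argument (Lemma~\ref{lem:psi}) then compares $\vk$ with $\vdtk$ and $\wk$ with $\wdtk$; this works precisely because the regularized sources converge \emph{uniformly}. Sending $\dt\to0$, then $\alpha\to0$, then $k\to\infty$ gives $\vl\le\ul\le\wl$, and Theorem~\ref{thm:CalvezLam2020} forces $\vl=\wl$.

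The reason the paper takes this detour is that your proposed shortcut has a genuine gap: in the half-relaxed-limits test one picks a sequence $(t_n,x_n)\to(t_0,x_0)$ realizing the relaxed limit, and needs $\Jdtn(t_n)\to\Jl(t_0)$. Pointwise convergence of $\Jdtn$ to $\Jl$ together with continuity of $\Jl$ at $t_0$ does \emph{not} give this, because $t_n$ is chosen by the extremum of the doubled function, not by you. One would need something like locally uniform convergence of $\Jdtn$ near continuity points of $\Jl$, which does not follow from Helly alone. The regularization trick trades the $BV$ source for a Lipschitz one at the cost of an extra limit in $k$, and buys exactly the uniform convergence that makes the consistency step go through.
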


\begin{rmq}
\label{rmq:truncation_scheme}
 Scheme \eqref{eq:scheme} is introduced on an infinite trait grid $(x_i)_{i\in\Z}$, and Prop. \ref{thm:limitconv} is stated in the same framework. In practice, Scheme \eqref{eq:scheme} can be implemented on a grid that is reduced at each time step, so that no approximation has to be introduced on the boundaries, and that Prop. \ref{thm:limitconv} holds.
 This represents however an increase in terms of computational cost,
 as grids larger than necessary are to be handled with. To work with a fixed grid, one can  approximate the values that are outside of the grid. This has been proposed and numerically tested in \cite{CalvezHivertYoldas2022}, but the convergence cannot be properly established with these assumptions.
 Another issue, when dealing with grids larger than necessary, is that
 stability condition \eqref{eq:CFLsatisfier} may become much more restrictive on a larger grid, especially when trying to deal with data that are only Locally Lipschitz.
 This latter can be dealt with by linearizing $\dHe$ where slopes are stronger than necessary, see Section \ref{sec:cflgest}.
 \end{rmq}

  Prop. \ref{thm:limitconv} states the convergence of scheme \eqref{eq:scheme}, but does not give any clue on the convergence rate. This is due to the fact that the proof relies on compactness arguments for the sequences $(u_\dt)_{\dt\ge 0}$, and  $(I_\dt)_{\dt\ge 0}$, thanks to the stability estimates of Prop. \ref{thm:existence}.  Their limits are then identified with the viscosity solution of \eqref{eq:HJ}, using viscosity procedure. This goes through an appropriate regularization of $I_\dt$, as Lipschitz regularity is needed for time-dependency of source terms in the standard Hamilton-Jacobi framework \cite{BarlesBook, CrandallIshiiLions1992, Evans}. We refer to Section \ref{sec:ProofConvergence} for details.

\section{An example: Lotka-Volterra integral equations}
\label{sec:Example}

In previous section the presentation of the hypothesis on $\dHe$ was deliberately broad so that scheme \eqref{eq:scheme} could be used in various applications, as soon as the equation satisfies the hypothesis of Theorem \ref{thm:CalvezLam2020},
and the discretization is adapted. In this section, we focus on a model of population dynamics described by \eqref{eq:P_ep}, and propose some schemes for \eqref{eq:HJ}-\eqref{eq:discussion_H_Sans1}.
However, in order for \eqref{hyp:H0} to be satisfied, we will rather denote
 \begin{equation}
 \label{eq:discussion_H}
 \Hcont\left(p\right)=\int_\R \K\left(z\right) e^{-zp}dz-1,
 \end{equation}
 and modify $\Rcont$ accordingly.
 Here $\K(z)$
is even, non negative and of integral $1$.
To make sure that $\Hcont$ is properly defined, it is also supposed that $z\mapsto \K(z)\e^{-zp}$, and $z\mapsto |z|\K(z)\e^{-zp}$, are integrable for all $p\in\R$. This is for instance the case when $\K$ is a Gaussian or a compactly supported kernel.

\subsection{Examples of schemes}\label{sec:schemes}

In this part, we assume that $\Rcont$ can be analytically computed, so that no approximation is needed. The choice $\dRe=\Rcont$ is natural in this context, and we discuss here the choice of the approximation $\dHe$ of $\Hcont$.
Several classical schemes are covered by our analysis. Assume that we dispose of an approximation of the integral \eqref{eq:discussion_H}, and thus of $\Hcont_\eta$  such that for all $|p|\leq \Linf$:
\[
|\Hcont_\eta\left(p\right)-\Hcont\left(p\right)|\le \eta,\;\;\mathrm{and}\;\; \Hcont_\eta\left(0\right)=\Hcont\left(0\right)=0.
\]
Suppose also that $\Hcont_\eta$ is convex, even, and that it is increasing for $p>0$, as is $\Hcont$. For instance, considering a quadrature of $\Hcont$ with a symmetric grid, and renormalized for $\Hcont_\eta$ to be $0$ at $p=0$ suits, although other choices may also be adapted.
The so-called flat setting (see Def \ref{def:WellChosenScheme}) includes the scheme (2.5) of \cite{CrandallLions}
\begin{equation}
\label{eq:HCrandLions}
\tag{$\dHe^\text{CL}$}
\forall(p,q)\in\R,\;\;\dHe^\text{CL}(p,q)=\Hcont_\eta(p^+)+\Hcont_\eta(-q^-),
\end{equation}
where $p^+=\max(0,p)$ is the positive part of $p$ and $q^-=\max(0,-q)$ the negative part of $q$. 
One issue with this choice of discretization is that for a local maximum (i.e $p>0$, $q<0$), one can have $\dHe^{\text{CL}}(p,q)>\eta+\sup_{d\in[q,p]}\Hcont_\eta(d)$. 
This means that  the value of the Hamiltonian at local maxima is overestimated by up to a factor two, when the local maximum enjoys symmetry. To avoid this issue, another scheme is for instance used  in \cite{GuerandKoumaiha2019}
\begin{equation}
\label{eq:Hupw}
\tag{$\dHe^\text{upw}$}
\forall (p,q)\in\R^2,\;\;\dHe^\text{upw}(p,q)=\max\left(\Hcont_\eta(p^+),\Hcont_\eta(-q^-)\right).
\end{equation}
In this case, the issue is the lack of regularity when $\Hcont_\eta(p^+)$ and $\Hcont_\eta(-q^-)$ are equal.
Both schemes \eqref{eq:HCrandLions}-\eqref{eq:Hupw} are in the flat and convex settings, as defined in Definition \ref{def:WellChosenScheme}. 
In \cite{CrandallLions}, a Lax-Friedrichs scheme is also introduced
\begin{equation}
\label{eq:HLF}
\tag{$\dHe^\text{LF}$}
\forall (p,q)\in\R^2, \;\;\dHe^\text{LF}(p,q)=\Hcont_\eta\left(\frac{p+q}{2}\right)-\theta(q-p),
\end{equation}
with $\theta$ large enough, so that $\dHe$ increasing in its first variable, and decreasing in the second one. 
This choice no longer satisfies the flat hypothesis, but is still in the convex setting. 
Another choice stems from the limit of a natural asymptotic-preserving scheme introduced in \cite{CEMRACS2018} 
and analyzed in Section \ref{sec:APscheme}.
Formally, introduce $h^+_\eta(p)$, $h^-_\eta(p)$, some monotonous and convex functions such that
\[
\left|h^-_\eta(p)-\int_{\R^-} \K(z) e^{-zp}dz+\frac{1}{2}\right|
\le\eta/2,
\;\;\;\;
\left|h^+_\eta(q)-\int_{\R^+}\K(z) e^{-zq}dz+\frac{1}{2}\right|\le\eta/2,\]
and
\[
h^+_\eta(0)=h^-_\eta(0)=0.
\]
Roughly speaking $h_\eta^+$ and $h_\eta^-$ respectively approximate the positive and negative part of the integral. This choice is intended to provide monotony properties \eqref{hyp:Heta_monotonous} to the approximation of $\Hcont$.
Using these notations,  let
\begin{equation}
\label{eq:HP1}
\tag{$\dHe^\text{P1}$}
\forall(p,q)\in\R^2,\;\;\dHe^\text{P1}(p,q)=h^-_\eta(p)+h^+_\eta(q).
\end{equation}
As intended,  $\dHe$ is convex and enjoys the wished monotony properties \eqref{hyp:Heta_monotonous}. However, when it is true for the solution of the continuous model \eqref{eq:HJ}, the monotony of $\bI$  is not true for the solution of scheme \eqref{eq:scheme},  see section \ref{sec:TestsNums_MonotonyI}.
This a drawback of \eqref{eq:HP1}. Indeed, when the net growth rate $R$ does not depend on $t$,  the monotony of $I$ is a property that should be preserved.

We propose in what follows, a concave-convex-split scheme. It is intended to enforce the flat setting, and preserve the monotony of $I$ when it is relevant.  
Assume that the choice of $\Hcont_\eta$ discretizing the whole integral is consistent with the choices of $h_\eta^+$ and $h_\eta^-$ made above, that is: $\Hcont_\eta(p)=h^+_\eta(p)+h^-_\eta(p)$.
We make use of the smoothness of the viscosity solutions in the convex area using \eqref{eq:Hupw}, and the lack of regularity in the concave area using  \eqref{eq:HP1}. Namely, we propose
\begin{equation}
\label{eq:HCSS}
\tag{$\dHe^\text{CCS}$}
\forall (p,q)\in\R^2, \;\;
 \dHe^\text{CCS}(p,q)=\begin{cases}
\dHe^\text{upw}(p,q)\qquad&\text{if } p<q\\
\dHe^\text{P1}(p,q)&\text{else.}
\end{cases}
\end{equation}
It is worth noticing that \eqref{eq:HCSS} is the maximum of \eqref{eq:Hupw} and \eqref{eq:HP1}. It is then a convex function of $(p,q)$.

Let us close this section with a remark about the discretization of the integral defining $\Hcont$ in \eqref{eq:discussion_H}.
In several cases, for instance the Gaussian kernel, the bilateral Laplace transform of $\K$ is known analytically, thus the addition of the parameter $\eta$ may seem artificial. 
Obviously, the construction and analysis of the schemes covered by this article is possible when the constant $\CLH$ in  \eqref{hyp:Heta_approximation} is zero.

\subsection{Asymptotic-preserving scheme}
\label{sec:APscheme}

The main goal of this paper is to provide a general framework for the numerical analysis of constrained Hamilton-Jacobi equations such as \eqref{eq:HJ}. The justification we have in mind for these models, is that they often appear as long-time and small mutations limits of population dynamics models.
In this section, we focus more precisely on Lotka-Volterra integral equations \eqref{eq:P_ep},
supplemented with an initial condition $\vin$ which satisfies \eqref{hyp:u0_Lipschitz}-\eqref{hyp:u0_coercive}.
We also suppose that there exists two constants $\cm$, $\cM$ such that
\begin{equation}
\label{hyp:vin_min}
\refstepcounter{assumption}
\tag{A\theassumption}
 \cm \ep\le\min\vin \le \cM\ep,
\end{equation}
and we assume that, $b$, $\Rcont$, and $R=\Rcont+b$ in \eqref{eq:P_ep} satisfy the same assumptions as the ones in \eqref{eq:HJ}, and that $\K$ is defined as in \eqref{eq:discussion_H}. It is worth noticing that, in comparison to \eqref{eq:HJ}, problem \eqref{eq:P_ep} is no longer a coupled problem where the unknowns are
a solution of a PDE, and a Lagrange multiplier associated to a constraint.
Instead, $\Jep$ is determined here with $\vep$ with the second line of \eqref{eq:P_ep}. It accounts for a weighted measure of the size of the population, where the weight $\psi$ is such that
\begin{equation}
 \label{hyp:psi}
 \refstepcounter{assumption}
\tag{A\theassumption}
 \exists\;(\psi_m,\psi_M)\in\R^2, \; \forall x\in\R^d, \;\;0<\psi_m\le \psi(x)\le \psi_M<+\infty, \;\;\text{and}\;\; \psi\in W^{2,\infty}(\R^d).
\end{equation}
The following result holds

\begin{thm}(\cite{BarlesMirrahimiPerthame2009, CalvezLam2020})
\label{thm:P_ep}
 Assume that the assumptions of Theorem \ref{thm:CalvezLam2020} are satisfied, as well as \eqref{hyp:vin_min}-\eqref{hyp:psi}, and that $\Rcont$ does not depend on $t$. Let $\vep$ be the solution of \eqref{eq:P_ep}, and $\Jep$ be defined in \eqref{eq:P_ep}. Suppose also that $(\vin)_\ep$ is a sequence of uniformly continuous functions which converges uniformly to $\uin$. Then, $(\vep)_\ep$ converges locally uniformly to a function $u\in\Cc([0,+\infty[\times\R^d)$, and $(\Jep)_\ep$ converges almost everywhere to a function $I$, such that $I\in BV(0,T)$ for all $T>0$, and that $(u,I)$ is the unique viscosity solution of \eqref{eq:HJ}, with $\Hcont$ defined in \eqref{eq:discussion_H}, and $\Rcont$ replaced by $b+\Rcont$.
\end{thm}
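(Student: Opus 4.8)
The statement is quoted from \cite{BarlesMirrahimiPerthame2009, CalvezLam2020}, so I only recall the structure of the argument, which is the classical one for Hopf--Cole limits of population models. \emph{A priori estimates, uniform in $\ep$:} starting from the coercivity \eqref{hyp:u0_coercive} of $\vin$ and the bounds \eqref{hyp:b_bounds} on $b$, together with the control of $\Rcont$, one propagates by a comparison principle for \eqref{eq:P_ep} two-sided estimates of the form $\ua|x| + \ub - Ct \le \vep(t,x) \le \oa|x| + \ob + Ct$; differentiating \eqref{eq:P_ep} in $x$ and using \eqref{hyp:b_lipschitz} and the regularity of $\Rcont$ yields an equi-Lipschitz bound $|\nabla_x \vep(t,\cdot)| \le \Lin + Kt$. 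From \eqref{hyp:vin_min} and the sign conditions \eqref{hyp:R_ImIM} one obtains that $\min_x \vep(t,\cdot)$ stays of order $\ep$, both from below and from above.

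\emph{Control of $\Jep$.} Using the same sign conditions, $\Jep(t)$ is trapped in a fixed compact interval $[I_m, I_M]$ up to $\ep$-corrections: should $\Jep$ try to exceed $I_M$, then $\Rcont(t,\cdot,\Jep) \le 0$ and the population cannot grow, and symmetrically at $I_m$. The key estimate is the uniform-in-$\ep$ bound on the total variation of $\Jep$ on $[0,T]$: one differentiates the relation $\Jep(t) = \int_\R \psi(x) e^{-\vep(t,x)/\ep}\dd x$ in time, substitutes $\partial_t \vep$ from \eqref{eq:P_ep}, and exploits that $I \mapsto b(x,I)\Hcont(p) + \Rcont(t,x,I)$ is non-increasing --- a consequence of \eqref{hyp:H0}--\eqref{hyp:b_decreasing}--\eqref{hyp:R_decreasing} --- to close a Gr\"onwall-type inequality for the variation. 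This monotonicity, which is precisely what makes $\Jep$ act as a Lagrange multiplier, is the structural heart of the proof.

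\emph{Compactness and passage to the limit.} By Arzel\`a--Ascoli, the equi-Lipschitz and equicoercivity bounds give a subsequence along which $\vep \to u$ locally uniformly, with $u \in \Cc([0,+\infty[\times\R)$ Lipschitz in $x$; by Helly's selection theorem, the BV bound gives $\Jep \to I$ almost everywhere with $I \in BV(0,T)$ and $I_m \le I \le I_M$. One then passes to the limit in the viscosity sense: via the perturbed test function method, the half-relaxed limits of $\vep$ are a sub- and a super-solution of $\partial_t u + b(x,I(t))\Hcont(\nabla_x u) + \bigl(b(x,I(t)) + \Rcont(t,x,I(t))\bigr) = 0$ with $\Hcont$ as in \eqref{eq:discussion_H}, the non-local term $\int_\R \K(z) e^{-(\vep(t,x+\ep z) - \vep(t,x))/\ep}\dd z$ converging to $\int_\R \K(z) e^{-z\nabla_x u}\dd z = \Hcont(\nabla_x u) + 1$; the integrability assumptions on $\K$ from Section~\ref{sec:Example} control the tails of the integral uniformly. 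The constraint $\min_x u(t,\cdot) = 0$ follows from the two-sided control of $\min_x \vep$ above. Finally, Theorem~\ref{thm:CalvezLam2020} applied with $\Rcont$ replaced by $b + \Rcont$ identifies $(u,I)$ uniquely, so the whole family converges and not merely subsequences.

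The main obstacle is the uniform BV estimate on $\Jep$: it is the only place where the monotonicity hypotheses \eqref{hyp:b_decreasing}--\eqref{hyp:R_decreasing} are genuinely used, and it requires either a careful differentiation argument under additional regularity of $\vep$ obtained by approximation, or the more robust comparison-based approach of \cite{CalvezLam2020}. The passage to the limit in the non-local Hamiltonian, though standard, also demands care because of the unbounded domain of integration and because $I$ may be discontinuous; this is why a regularization of $I$, in the spirit of the one alluded to after Prop.~\ref{thm:limitconv}, is needed to remain inside the standard viscosity framework.
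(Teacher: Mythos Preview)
The paper does not give a proof of this theorem: it is quoted verbatim from \cite{BarlesMirrahimiPerthame2009,CalvezLam2020} and no argument is supplied beyond the citation. Your sketch is therefore not to be compared against a proof in the paper, but against the original references, and on that score it is a faithful outline of the strategy in \cite{BarlesMirrahimiPerthame2009} (a~priori bounds, compactness, viscosity passage to the limit) together with the uniqueness step from \cite{CalvezLam2020}. Two small remarks: the space-Lipschitz bound on $\vep$ in those works is usually obtained by comparison with translated solutions rather than by differentiating the equation, and the BV bound on $\Jep$ in \cite{BarlesMirrahimiPerthame2009} --- when $\Rcont$ is time-independent --- comes more directly from a one-sided control on $\frac{d}{dt}\Jep$ (essentially a monotonicity statement) rather than a Gr\"onwall inequality on the total variation; but your identification of the monotonicity of $I\mapsto b\Hcont+\Rcont$ as the structural point is exactly right.
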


\begin{rmq}
 As for Theorem \ref{thm:CalvezLam2020},
 the
  above convergence result is \emph{a priori} only true when $\Rcont$ does not depend on $t$.
\end{rmq}

However, focusing on the asymptotic regime may be not completely relevant in terms of modeling.
Considering the problem at some distance of the asymptotic regime may then give more information.
Another issue when dealing only with asymptotic regimes such as \eqref{eq:HJ} is, that it restricts the possibilities of biological situations that can be represented. Indeed, such asymptotic regimes are obtained under strong hypothesis on the birth rate $b$ and the net growth rate $\Rcont$, that make sure that the population never extincts, nor grows uncontrolled.
The question of the asymptotic behavior of the population in a regime of long time and small mutations, under more general hypothesis on the parameters of the model, has been addressed in the parabolic case for some changing environments in recent works \cite{FigueroaIglesiasMirrahimi2018, FigueroaIglesiasMirrahimi2021, CostaEtchegarayMirrahimi2021, VanRensburgSpillDuong2023}. Up to our knowledge, it is an open problem for Lotka-Volterra integral equation \eqref{eq:P_ep}.
If some of the assumptions on $b$ and $\Rcont$ were not satisfied, Theorem \ref{thm:P_ep} might not hold.
Indeed, the population may vanish or explode, depending on the fact that the environment is poorly adapted or too advantageous.
Very formally, suppose that it is possible to show that $(\vep)_\ep$ has a limit when $\ep\to 0$. Then, these behaviors can be understood with the asymptotic behaviors of $(\min \vep)_\ep$, and consequently of $(\Jep)_\ep$:
\begin{itemize}\pt
\item if $\min \vep$ converges to a positive limit, then $\Jep\to_{\ep\to 0}0$, meaning that the population vanishes, 
\item conversely, the population explodes if the limit of $\min\vep$ is negative, meaning that $\Jep\to_{\ep\to 0} +\infty$.
\end{itemize}
The numerical resolution of \eqref{eq:P_ep} requires specific attention, as \eqref{eq:P_ep} becomes stiff when $\ep$ is small. If no specific strategy was employed, the convergence of the numerical scheme for \eqref{eq:P_ep} would fail in the asymptotic regime. Schemes specifically designed for such singular problems are said to be \emph{Asymptotic Preserving} (AP).
Their properties are often summarized in the following diagram
\begin{equation}
\label{APDiagram}
 \begin{array}{r c l}
  \eqref{eq:P_ep} &\xrightarrow{\;\;\;\ds\ep\to 0\;\;\;} & \eqref{eq:HJ}  \vspace{4pt} \\
  \left. \begin{array}{c} $ $ \\ h\to 0 \\ $ $ \end{array} \right\uparrow \hspace{7pt} &  & \hspace{7pt}
  \left\uparrow  \begin{array}{c} $ $ \\ h\to 0 \\ $ $ \end{array} \right. \vspace{4pt}
  \\
  \left(S_\ep^h\right) & \xrightarrow[\;\;\;\ds\ep\to 0\;\;\;]{} &  \left(S_0^h\right)
 \end{array}.
 \end{equation}
The first line represents the continuous level: the solution of \eqref{eq:P_ep} converges when $\ep\to 0$ to the viscosity solution of \eqref{eq:HJ}. The second line refers to the numerical schemes. The scheme $(S_\ep^h)$, where $h$ summarizes all the discretization parameters, is required to converge to the solution of \eqref{eq:P_ep} when $\ep>0$ is fixed and $h\to 0$. Moreover, it must degenerate when $\ep\to 0$ with fixed $h$ to another scheme $(S_0^h)$, which approximates properly the viscosity \eqref{eq:HJ} when $h\to 0$. An AP scheme can also enjoy the stronger property of being \emph{Uniformly Accurate} (UA), meaning that its precision is independent on $\ep$. AP schemes were introduced for kinetic equations \cite{Klar1, Klar2, Jin}, and various asymptotics have been considered \cite{JinReview2012, DimarcoPareschiReview2014}.

The design of AP schemes for Hamilton-Jacobi limits of biological models has been investigated more recently.
An AP scheme for a Hamilton-Jacobi limit of a kinetic equation has been proposed in \cite{Hivert_2}, and a model structured in age and phenotypic trait but in which no mutations are considered is treated in \cite{AlmeidaPerthameRuan2020}. Regarding Lotka-Volterra models, an AP scheme for the parabolic case has been proposed and analyzed in \cite{CalvezHivertYoldas2022}.
A finite-differences based scheme which enjoys stability properties in the limit $\ep\to 0$ has been proposed in \cite{CEMRACS2018}, for a problem close to \eqref{eq:P_ep} and in dimension $d=1$. 
It can be easily adapted in a scheme for \eqref{eq:P_ep}. Namely, 
let $\dz>0$, and define for all $k\in\Z$, $z_k=k\dz$. For the other variables, we use the grids of scheme \eqref{eq:scheme}. 
To write the scheme in the spirit of the notation \eqref{eq:MsJ}, we define, 
for all $J\in\R$, a function $\MepvI:\R^\Z\to\R^Z$, such that, for all $\bv=(v_i)_{i\in\Z}$, and for all $i\in\Z$, 
\begin{align}
 \label{eq:MepvI} 
 \MepvI(\bv)_i=v_i &- \dt \dz  \sum\limits_{k\in\Z,\;\ep z_k\in[0,\dx]}\ds \K(z_k) b\left(x_i+\ep z_k,J\right)\; \e^{-z_k(v_{i+1}-v_i)/\dx} \\
 &- \dt  \dz \sum\limits_{k\in\Z,\;\ep z_k\in[-\dx,0)} \K(z_k) b\left(x_i+\ep z_k, J\right)\; \e^{-z_k(v_i-v_{i-1})/\dx} \nonumber
\\
&-\dt\dz \sum\limits_{k\in\Z,\;|\ep z_k|>\dx} \K(z_k) b\left(x_i+\ep z_k, J\right) \e^{-(\tvik-v_i)/\ep}, \nonumber
\end{align}
where $\tvik$ denotes the linear interpolation of $(\bx,\bv)$ at abscissa $x_i+\ep z_k$.

\begin{rmq}
 As $\tvik$ is a linear interpolation of $\bv$, there is formally no difference between the difference quotients  $(\tvik-v_i)/\ep$ and $z_k (v_{i+1}-v_i)/\dx$, if $0<\ep z_k\le \dx$ (or $z_k(v_i-v_{i-1})/\dx$, if $-\dx\le \ep z_k<0$). In fact, they both are the slope of the linear interpolation of $(\bx,\bv)$.
 The three lines of \eqref{eq:MepvI} may then seem artificial at first sight. However, when implemented, this reformulation ensures stability in the limit $\ep\to 0$. Indeed, the expression $(\tvik - v_i)/\ep$ does not behave well when $\ep\to 0$, as approximation errors in the linear interpolation are excessively increased, when divided by $\ep\ll1$, and injected in the exponential.  
\end{rmq}
In order to state monotony properties for $\MepvI$, let us define for all $L\ge 0$,
\begin{equation}
 \label{eq:CHLAP}
 \CHLAP(L)=\dz\bM\sum\limits_{k\in\Z} |z_k|\K(z_k)\e^{L|z_k|},
\end{equation}
where $\bM$ is defined in \eqref{hyp:b_bounds}.
Note that $\CHLAP$ is well-defined since it was supposed that $z\mapsto |z|\K(z)\e^{-zp}$, is integrable for all $p\in\R$. The monotony of $\MepvI$ is stated in the following lemma
\begin{lem}
\label{lem:AP_monotonuous}
 Let $L\ge 0$,  $J\in\R$, and let $\bv$ and $\bw\in\R^\Z$ such that for all $i\in\Z$, $|v_{i+1}-v_i|\le L\dx$, and $|w_{i+1}-w_i|\le L\dx$. Moreover, assume that $\dt \CHLAP(L)\le \dx$, with $\CHLAP$ defined in \eqref{eq:CHLAP}. We have, 
 \begin{enumerate}
  \item if $v_i\le w_i$ for all $i\in\Z$, then for all $i\in\Z$, $\MepvI(\bv)_i\le\MepvI(\bw)_i$,
  \item If $\bv-\bw\in\ell^\infty(\Z)$, then, $\MepvI(\bv)-\MepvI(\bw)\in\ell^\infty(\Z)$, and $\|\MepvI(\bv)-\MepvI(\bw)\|_\infty\le\|\bv-\bw\|_\infty$.
  \item for all $i\in\Z$, $\left|\MepvI(\bv)_{i+1}-\MepvI(\bv)_i\right|\le L\dx$.
  \item
  For all $i\in\Z$, $J\mapsto \MepvI(\bv)_i$ is non-decreasing.
 \end{enumerate}
\end{lem}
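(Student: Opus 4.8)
The plan is to prove the four assertions essentially by direct inspection of the definition \eqref{eq:MepvI}, in the order they are stated, since each one is a discrete analogue of a standard monotonicity property for upwind Hamilton-Jacobi schemes, and the CFL condition $\dt\CHLAP(L)\le\dx$ is exactly what is needed to control the explicit terms. The key observation is that, on the relevant slope range, each summand of $\MepvI(\bv)_i$ is a function of $v_i$ and of the neighbouring values $v_{i\pm1}$ (or, in the third line, of interpolated values $\tvik$ which are convex combinations of two grid values), and that these functions are monotone with the expected signs.

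\textbf{Step 1 (assertion 1, monotonicity).} I would write $\MepvI(\bv)_i$ as $v_i$ minus a sum of nonnegative terms of the form $\dt\dz\,\K(z_k)b(x_i+\ep z_k,J)\,\e^{-(\text{slope})}$, and differentiate (or estimate the finite difference of) each term with respect to the arguments. Because $b>0$, $\K\ge0$, and the exponential is decreasing in the slope, each such term is nonincreasing in $v_i$ and nondecreasing in the relevant neighbour value. The only delicate point is to check that the ``diagonal'' dependence on $v_i$ itself does not dominate: this is where the CFL condition enters. Writing $\MepvI(\bv)_i-\MepvI(\bw)_i$ with $\bv\le\bw$, a mean-value/telescoping argument on each exponential term produces, for the coefficient of $(v_i-w_i)$, a factor bounded in absolute value by $\dt\dz\sum_k|z_k|\K(z_k)b(x_i+\ep z_k,J)\,\e^{L|z_k|}/\dx\le \dt\CHLAP(L)/\dx\le1$; the coefficients of the neighbour differences $(v_{i\pm1}-w_{i\pm1})$ are then nonnegative, and since $w_j-v_j\ge0$ everywhere the result follows. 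For the third line one uses that $\tvik$ is a convex combination $\lambda v_j+(1-\lambda)v_{j+1}$ of two grid values with $|j\dx-(x_i+\ep z_k)|<\dx$ hence $|\ep z_k|>\dx$ forces $j\ne i$ only when $|z_k|$ is large enough — but in all cases $\tvik$ is a convex combination of values at indices distinct from $i$ only up to the ``$=i$'' endpoint, so the coefficient of $v_i$ coming from this line is again controlled by the same $|z_k|$-weighted sum; here the factor $1/\ep$ in the exponent is compensated by $|\ep z_k|/\ep = |z_k|$ after the mean-value step, which is the whole point of the reformulation noted in the remark.

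\textbf{Step 2 (assertions 2 and 3).} Assertion 2 is the contraction property and follows from assertion 1 by the standard argument: with $c=\|\bv-\bw\|_\infty$, apply assertion 1 to the pair $\bv\le\bw+c$ and to $\bw\le\bv+c$, and note $\MepvI(\bw+c)=\MepvI(\bw)+c$ (adding a constant to all $v_j$ leaves every slope unchanged and every interpolation shifts by $c$). Assertion 3 — propagation of the Lipschitz bound — is proved by writing $\MepvI(\bv)_{i+1}-\MepvI(\bv)_i=(v_{i+1}-v_i)-[\,\cdots\,]$ and bounding the bracket: each difference of corresponding summands (with shifted index $i$) is, by the mean-value theorem applied to both the $b$-factor and the exponential, at most a quantity of the form $\dt\dz\,\K(z_k)(\text{stuff})$, and summing over $k$ one gets $|\MepvI(\bv)_{i+1}-\MepvI(\bv)_i|\le |v_{i+1}-v_i|(1+\text{small})$; in fact the cleanest route is to observe $\MepvI$ is monotone (assertion 1) and commutes with adding constants, apply it to $\bv$ and to the shifted-by-$L\dx$ translate of $\bv$, and deduce $|\MepvI(\bv)_{i+1}-\MepvI(\bv)_i|\le L\dx$ from $v_i-L\dx\le v_{i+1}\le v_i+L\dx$ together with $\MepvI$ being monotone and shift-covariant; I would use this slicker version. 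Assertion 4 is immediate: $b(x,\cdot)$ is nonincreasing by \eqref{hyp:b_decreasing}, hence each subtracted term is nonincreasing in $J$, hence $\MepvI(\bv)_i$ is nondecreasing in $J$.

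\textbf{Main obstacle.} The only genuinely technical point is bookkeeping in Step 1: making sure that the ``third line'' of \eqref{eq:MepvI}, involving interpolations $\tvik$ and the factor $1/\ep$, is handled correctly, i.e. that after the mean-value estimate its contribution to the diagonal coefficient is still bounded by the $|z_k|\K(z_k)\e^{L|z_k|}$-weighted sum and thus absorbed by the CFL condition — this requires writing $\tvik-v_i$ as a telescoping sum of consecutive increments $v_{j+1}-v_j$ along the grid between $x_i$ and $x_i+\ep z_k$ (there are at most $\lceil|z_k|\rceil$ of them, each bounded by $L\dx$), so that $|\tvik-v_i|\le L|\ep z_k|$ and the $1/\ep$ cancels. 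Once this is in place, all four assertions follow by the routine manipulations sketched above, and I would present assertions 2 and 3 via the shift-covariance trick to keep the argument short.
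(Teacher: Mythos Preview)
Your proposal is correct and follows essentially the same route as the paper's proof: establish that $\MepvI$ is nondecreasing in every grid value under the CFL condition (the paper condenses your Step~1 into the observation that ``linear interpolation preserves inequalities, namely $\tvik\le\twik$, and then $\dt\CHLAP(L)\le\dx$ makes $\MepvI$ nondecreasing in all its arguments''), then derive (2) and (3) from (1) via the constant-shift identity $\MepvI(\bv+c)=\MepvI(\bv)+c$ together with the comparison $w_i=v_{i+1}\pm L\dx$, and read (4) directly off \eqref{hyp:b_decreasing}. One small caveat on wording: $\MepvI$ is not literally index-shift-covariant because of the coefficients $b(x_i+\ep z_k,J)$, so your phrase ``shift-covariant'' for assertion (3) is a slight abuse --- the paper makes the same tacit simplification.
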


\begin{proof}
 The proof of this proposition is straightforward once one notices that the linear interpolation preserves inequalities. Namely, it means that for all $(i,k)\in\Z^2$, $\tvik\le\twik$. Then, condition $\dt\CHLAP(L)\le\dx$ makes $\MepvI$ be a non-decreasing function of all its arguments, and the first point of the Lemma comes immediately. The second point is a consequence of the first one, of the inequalities $w_i-\|\bv-\bw\|_\infty\le v_i\le w_i+\|\bv-\bw\|_\infty$, and of
 \[
  \forall c\in\R, \;\forall i\in\Z,\;\;\MepvI(\bv+c)=\MepvI(\bv)+c.
 \]
The third point is done similarly  with $w_i=v_{i+1}\pm L\dx$,
while the last point comes from \eqref{hyp:b_decreasing}, since $\K$ is non-negative.
\end{proof}

Using notation \eqref{eq:MepvI}, the scheme for \eqref{eq:P_ep} is then defined for all 
$n\in\ccl 0,N_T-1\ccr$ and all $i\in\Z$, by
\begin{equation}
 \label{eq:scheme_ep}
 \tag{$\mathcal{S}_\ep$}
\left\{ 
\begin{array}{l}
\ds v^{n+1}_i  =\MepvInp(\bv^n)_i  - \dt \Rcont\left(\tnp,x_i,J^{n+1}\right) \vspace{4pt}\\
  \ds J^{n+1} = \dx \sum\limits_{i\in\Z} \psi(x_i) \e^{-v^{n+1}_i/\ep}.
\end{array}
\right.
\end{equation}
It has been constructed with a quadrature in the integral kernel representing the births in \eqref{eq:P_ep}, and the stiffest term, $J$, is taken implicit to ensure stability in the regime $\ep\to 0$.
In what follows, it will be supposed that $\dz$ is not too large, so that the quadrature in $\MepvI$ is accurate enough. In particular, we will assume that 
\begin{equation}
\label{eq:AP_QuadratureAccurate}
\left| \dz \sum\limits_{k\in\Z} \K(z_k)-1\right|\le \frac{1}{2\bM},
\end{equation}
with $\bM$ defined in \eqref{hyp:b_bounds}.
Note that both $(\bv^{n+1})_{n\in\ccl 0,N_T-1\ccr}$ and $\bJ$ depend on $\ep$, although this dependency is omitted to simplify notations.
The following proposition states the well-posedness of scheme \eqref{eq:scheme_ep}, and uniform w.r.t. $\ep$ stability properties

\begin{prop}[Scheme \eqref{eq:scheme_ep}: existence of solutions and stability properties]
 \label{prop:Scheme_ep_stabilite}
 Suppose that $\vin$ satisfies \eqref{hyp:u0_Lipschitz}-\eqref{hyp:u0_coercive}-\eqref{hyp:u0_minimum}-\eqref{hyp:vin_min}, that $\psi$ satisfies \eqref{hyp:psi}, that $b$ satisfies \eqref{hyp:b_bounds}-\eqref{hyp:b_lipschitz}-\eqref{hyp:b_decreasing},  that $R$ and $\Rcont = R+b$ satisfy \eqref{hyp:R_decreasing}-\eqref{hyp:R_ImIM}-\eqref{hyp:R_bounded}-\eqref{hyp:R_t-Lipschitz},
  and that $\dt$ and $\dx$ are fixed such that
 \begin{equation}
 \label{eq:CFLAP}
 \dx\; \CHLAP(\Lx(T))\le \dt,
 \end{equation} 
 with $\CHLAP$ defined in \eqref{eq:CHLAP}, $\Lx$ defined in Prop. \ref{thm:existence}-\ref{it:Lx}.
  Then, scheme \eqref{eq:scheme_ep} is well-posed: there exists $\bv\in\R^{\Z\times\ccl 0,N_T\ccr}$ satisfying \eqref{eq:scheme_ep}. Moreover,
 there exists an $\ep_0>0$, depending only on the constants arising in the assumptions, and on $\dt$ and $\dx$, such that for all $\ep\in(0,\ep_0)$, the sequence $(\bv^n)_{n\in\ccl 0,N_T\ccr}$ satisfies: 
 \begin{enumerate}
  \item\label{it:AP_Lx} \textbf{Uniform Lipschitz continuity in trait:} For all $n\in\ccl 0,N_T\ccr$, $\bv^n$ enjoys $\Lx(\tn)$-Lipschitz property:
\[
\forall i\in\Z, \;\left| \frac{v^n_{i+1}-v^n_i}{\dx} \right| \le \Lx(\tn),
\]
with $\Lx$ defined in Prop. \ref{thm:existence}-\ref{it:Lx}.
\item\label{it:AP_coercive} \textbf{Uniform bound from below for $\bv^n$:} For all $n\in\ccl 0,N_T\ccr$, for all $i\in\Z$,
\[
v^n_i\ge\ua |x_i| + \ubeta_\tn,
\]
 with $\ua$ defined in \eqref{hyp:u0_coercive}, and for all $t\in[0,T]$,
 \[
 \ubeta_t = \ub - \bM t\dz\sum\limits_{k\in\Z} \K(z_k) \e^{\ua|x_i|} -t K,
\]
with $\ub$, $\bM$ and $K$ defined in \eqref{hyp:u0_coercive}, \eqref{hyp:b_bounds} and \eqref{hyp:R_K}.
 \item\label{it:AP_Jbound} \textbf{Uniform bounds for $\bJ$:} For all $n\in\ccl 0,N_T-1\ccr$, 
 \[
  I_m/2\le J^{n+1}\le 2 I_M,
 \]
 where $I_m$ and $I_M$ are defined in \eqref{hyp:R_ImIM}.
 \item\label{it:AP_min} \textbf{Estimate for $\min\bv^n$:} There exists $c_m$ and $c_M$ such that for all $n\in\ccl 0,N_T\ccr$, 
 \[
  c_m\ep \le\min\bv^n\le c_M \ep,
 \]
and $c_m\le \cm$, $c_M\ge \cM$ depend only on the constants defined in the assumptions, and on $\dx$ and $\dt$. 
 \end{enumerate}
\end{prop}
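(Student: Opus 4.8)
My plan is to prove this as the $\ep$-counterpart of Proposition~\ref{thm:existence}, by induction on $n\in\ccl 0,N_T\ccr$, the monotony properties of Lemma~\ref{lem:AP_monotonuous} playing the role that the discrete maximum principle plays for \eqref{eq:scheme}. The base case $n=0$ is \eqref{hyp:u0_Lipschitz}, \eqref{hyp:u0_coercive}, \eqref{hyp:u0_minimum} and \eqref{hyp:vin_min} (item~\ref{it:AP_Jbound} being empty at $n=0$), and \eqref{eq:CFLAP} together with the monotonicity of $L\mapsto\CHLAP(L)$ and $\Lx(\tn)\le\Lx(T)$ guarantees that the hypothesis $\dt\,\CHLAP(L)\le\dx$ of Lemma~\ref{lem:AP_monotonuous} holds at every step as long as one works with $\Lx(\tn)$-Lipschitz sequences. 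I would first fix $\ep_0>0$, depending only on $\dx$, $\dt$ and the constants of the assumptions, small enough that $\e^{-\ua\dx/\ep}\le 1/2$ and a few analogous smallness conditions hold for all $\ep\in(0,\ep_0)$; the exponential estimates below are then uniform in $\ep<\ep_0$.

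For the induction step, assume $\bv^n$ satisfies the claimed properties. To get existence at time $\tnp$, for $J\ge0$ set $v_i^{n+1}(J)=\MepvI(\bv^n)_i-\dt\,\Rcont(\tnp,x_i,J)$, with $\MepvI$ taken at parameter $J$, and $\Phi_n(J)=\dx\sum_{i\in\Z}\psi(x_i)\,\e^{-v_i^{n+1}(J)/\ep}$. Comparing $\bv^n$ with the affine minorant given by item~\ref{it:AP_coercive} (first point of Lemma~\ref{lem:AP_monotonuous}) yields $\MepvI(\bv^n)_i\ge\ua|x_i|-C$ for some constant $C$, so the series defining $\Phi_n$ converges and $\Phi_n$ is continuous; by the fourth point of Lemma~\ref{lem:AP_monotonuous} and the monotonicity \eqref{hyp:R_decreasing} of $\Rcont$ in its last argument, $J\mapsto v_i^{n+1}(J)$ is non-decreasing, hence $\Phi_n$ is non-increasing, and since $\Phi_n(0)>0$ the strictly increasing map $J\mapsto J-\Phi_n(J)$ has a unique zero $J^{n+1}\ge0$, which defines $\bv^{n+1}$ uniquely.

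The heart of the argument is item~\ref{it:AP_Jbound}. As $J\mapsto J-\Phi_n(J)$ is strictly increasing with zero $J^{n+1}$, it suffices to check $\Phi_n(I_m/2)\ge I_m/2$ and $\Phi_n(2I_M)\le 2I_M$. For the lower bound, $\Rcont$ decreasing together with $\min_{t,x}\Rcont(t,x,I_m)\ge0$ (from \eqref{hyp:R_ImIM}) give $\Rcont(\tnp,\cdot,I_m/2)\ge0$, and since $\MepvI(\bv^n)_i\le v_i^n$ (the subtracted sum being non-negative) we get $v_i^{n+1}(I_m/2)\le v_i^n$, whence $\Phi_n(I_m/2)\ge\dx\sum_i\psi(x_i)\e^{-v_i^n/\ep}=J^n\ge I_m/2$ for $n\ge1$ (and similarly for $n=0$ via \eqref{hyp:vin_min}). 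For the upper bound, \eqref{hyp:R_decreasing} and $\max_{t,x}\Rcont(t,x,I_M)\le0$ give $\Rcont(\tnp,\cdot,2I_M)\le -I_M/K<0$; combining $\MepvI(\bv^n)_i\ge\max\bigl(\min\bv^n,\ \ua|x_i|+\ubeta_\tn\bigr)-C\dt$ (first point of Lemma~\ref{lem:AP_monotonuous} applied to two minorants of $\bv^n$, the discrete birth term being controlled via \eqref{eq:AP_QuadratureAccurate}) with the above bound on $v_i^{n+1}(2I_M)$, and splitting the series defining $\Phi_n(2I_M)$ into the finitely many indices near a minimizer of $\bv^n$ and the coercive tail (where $\e^{-\ua\dx/\ep}\le1/2$ makes it a convergent geometric-type sum), one obtains $\Phi_n(2I_M)\le C'\e^{-\dt I_M/(2K\ep)}\le 2I_M$ for $\ep<\ep_0$, with $C'$ independent of $\ep$. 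Hence $I_m/2\le J^{n+1}\le 2I_M$.

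The remaining items follow. For item~\ref{it:AP_min}, evaluating the second line of \eqref{eq:scheme_ep} at an index where $\bv^{n+1}$ attains its minimum, and using $\psi\ge\psi_m$, gives $J^{n+1}\ge\dx\psi_m\e^{-\min\bv^{n+1}/\ep}$, while the splitting argument above run with $J=J^{n+1}$ gives the reverse bound $J^{n+1}\le C''\e^{-\min\bv^{n+1}/\ep}$ with $C''$ independent of $\ep$; together with $I_m/2\le J^{n+1}\le 2I_M$, these force $c_m\ep\le\min\bv^{n+1}\le c_M\ep$ with $c_m\le\cm$, $c_M\ge\cM$ depending only on the data, $\dx$ and $\dt$. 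For item~\ref{it:AP_Lx}, the third point of Lemma~\ref{lem:AP_monotonuous} gives that $\MepvInp(\bv^n)$ is $\Lx(\tn)$-Lipschitz, and since $J^{n+1}$ now lies in the range where \eqref{hyp:R_bounded} applies, subtracting $\dt\,\Rcont(\tnp,\cdot,J^{n+1})$ adds at most $\dt K$ to the Lipschitz constant, giving $\Lx(\tnp)$. For item~\ref{it:AP_coercive}, I would apply the first point of Lemma~\ref{lem:AP_monotonuous} to $\bv^n$ and its $\ua$-Lipschitz affine minorant $x_j\mapsto\ua|x_j|+\ubeta_\tn$, evaluate $\MepvInp$ on that minorant using $b\le\bM$, $\K\ge0$ and the Lipschitz control of the difference quotients, and subtract $\dt\,\Rcont(\tnp,\cdot,J^{n+1})\le\dt K$, which yields $v_j^{n+1}\ge\ua|x_j|+\ubeta_\tnp$ with $\ubeta$ as in the statement. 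The main obstacle throughout is the implicit coupling between $J^{n+1}$ and $\bv^{n+1}$: unlike in Proposition~\ref{thm:existence}, the constraint is not $\min\bv^{n+1}=0$ but the nonlinear identity $J^{n+1}=\dx\sum_i\psi(x_i)\e^{-v_i^{n+1}/\ep}$, so $J^{n+1}$ and $\min\bv^{n+1}$ must be controlled simultaneously, and the argument only closes for $\ep$ below a threshold $\ep_0$ depending on $\dx$.
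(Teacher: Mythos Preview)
Your induction-and-monotony skeleton is the paper's, and well-posedness via the strictly increasing $J\mapsto J-\Phi_n(J)$ is exactly right. Two points, however, separate your argument from a complete proof.

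\medskip
\noindent\textbf{The lower bound $J^{n+1}\ge I_m/2$.} For $n\ge1$ your comparison $v_i^{n+1}(I_m/2)\le v_i^n$, hence $\Phi_n(I_m/2)\ge J^n\ge I_m/2$, is a neat shortcut that the paper does not use and that avoids any $\ep$-smallness at this step. But ``similarly for $n=0$ via \eqref{hyp:vin_min}'' is a genuine gap: from \eqref{hyp:vin_min} you only get $\dx\sum_i\psi(x_i)\e^{-v_i^0/\ep}\ge\dx\,\psi_m\,\e^{-\cM}$, which is of order $\dx$ and need not dominate $I_m/2$. The paper instead bounds $\Phi_n(I_m/2)$ from below at a single index $j$ with $v_j^n=\min\bv^n$: using $\MepvI(\bv^n)_j\le v_j^n\le c_M\ep$ and $\Rcont(\tnp,x_j,I_m/2)\ge I_m/(2K_1)$ one gets $\Phi_n(I_m/2)\ge\dx\,\psi_m\,\e^{-c_M}\,\e^{\dt I_m/(2K_1\ep)}$, and it is this exponentially-in-$1/\ep$ factor that forces $\Phi_n(I_m/2)\ge I_m/2$ for $\ep<\ep_1$, uniformly in $n$ and in particular for $n=0$.

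\medskip
\noindent\textbf{Order of the remaining steps, and the constant in the upper bound.} Your ``splitting argument'' for $J^{n+1}\le C''\e^{-\min\bv^{n+1}/\ep}$ uses the coercive lower bound on $\bv^{n+1}$, which you postpone to the end; as written this is circular. The paper's order avoids this: once $J^{n+1}\ge I_m/2$, the coercivity of $\bv^{n+1}$ (item~\ref{it:AP_coercive}) follows from Lemma~\ref{lem:AP_monotonuous} and $\Rcont(\cdot,\cdot,J^{n+1})\le K$; then the splitting of $J^{n+1}=\dx\sum_i\psi(x_i)\e^{-v_i^{n+1}/\ep}$ gives $\min\bv^{n+1}\le c_M\ep$; only \emph{after} that does the paper prove $J^{n+1}\le 2I_M$, by a one-point estimate at the minimiser of $\bv^{n+1}$ (not by bounding $\Phi_n(2I_M)$ directly). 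In that step the relevant constant is $K_1$, not $K$: the technical margin in \eqref{hyp:R_ImIM} is $I_M/K_1-\bM\ge1$, and with $K$ in place of $K_1$ the gap $I_M/K-\bM\dz\sum_k\K(z_k)$ may fail to be positive. With $K_1$ and \eqref{eq:AP_QuadratureAccurate} one gets the clean bound $\dt/2-\ep(c_M-c_m)\ge0$ for $\ep<\ep_3$, which closes the argument.
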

This proposition and its proof are very close to the stability properties proved in \cite{CalvezHivertYoldas2022} for an AP scheme designed for parabolic Lotka-Volterra equations. The proof is adapted to the present case in Appendix \ref{sec:APProof_lemme}. Using the results of Prop. \ref{prop:Scheme_ep_stabilite}, the following proposition holds, that describes the asymptotic behaviour of \eqref{eq:scheme_ep} when $\ep\to 0$. It  is proved in Appendix \ref{sec:APProof_prop}.

\begin{prop}[Convergence of the scheme \eqref{eq:scheme_ep} when $\ep\to0$]
\label{prop:Scheme_ep_AP}
 Suppose that the assumptions of Theorem \ref{thm:P_ep} are satisfied,
 and that $\dt$ and $\dx$ are fixed such that \eqref{eq:CFLAP} holds. Let $\bv$ and $\bJ$ be the $\ep$-dependent sequences defined by \eqref{eq:scheme_ep}. Then, for all $n\in\ccl 0,N_T-1\ccr$, 
 \[
  \|\bv^{n+1}-\bu^{n+1}\|_\infty\underset{\ep\to0}\longrightarrow 0,
  \;\;\;\text{and}\;\;\; J^{n+1}\underset{\ep\to 0}\longrightarrow I^{n+1},
 \]
where $\bu$ and $\bI$ satisfy
\begin{equation}
\label{eq:scheme_limit}
\tag{S$_0$}
 \left\{ 
 \begin{array}{l}
 \ds u^{n+1}_i=\MzerovInp(\bu^n)_i -\dt \Rcont\left(\tnp,x_i,I^{n+1}\right) \vspace{4pt} \\
\ds  \min\bu^{n+1}=0,
 \end{array}
 \right.
\end{equation}
and for all $I\in\R$, for all $\bu=(u_i)_{i\in\Z}$ and for all $i\in\Z$, $\MzerovI:\R^\Z\to\R^\Z$ is defined by
\begin{equation}
 \label{eq:MzerovI}
 \MzerovI(\bu)_i=u_i-b\left(x_i,I\right)\dt\dz  \left( \sum\limits_{k\ge 0} \K(z_k) \e^{-z_k(u_{i+1}-u_i)/\dx}+ \sum\limits_{k\le -1} \K(z_k) \e^{-z_k(u_i-u_{i-1})/\dx}\right).
\end{equation}
\end{prop}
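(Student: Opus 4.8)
The plan is to prove by induction on $n\in\ccl 0,N_T\ccr$ that $\|\bv^n-\bu^n\|_\infty\to0$ as $\ep\to0$, and that $J^n\to I^n$ for $n\ge1$, the pair $(\bu,\bI)$ being built step by step along the way: existence for \eqref{eq:scheme_limit} will come from the limit procedure itself, and uniqueness of $I^{n+1}$ (given $\bu^n$) from a monotonicity argument, so it is enough to establish the convergence along a subsequence of an arbitrary sequence $\ep_\ell\to0$. The base case $n=0$ is clear, since $\|\bv^0-\bu^0\|_\infty=\sup_i|\vin(x_i)-\uin(x_i)|\le\|\vin-\uin\|_{L^\infty}\to0$ by the hypotheses of Theorem~\ref{thm:P_ep}.

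Assume $\|\bv^n-\bu^n\|_\infty\to0$ (with $\bu^0,\dots,\bu^n$ already constructed, and $\bu^n$, being a limit of the $\Lxn$-Lipschitz $\bv^n$, itself $\Lxn$-Lipschitz). By Prop.~\ref{prop:Scheme_ep_stabilite}-\ref{it:AP_Jbound}, $(J^{n+1})_\ep$ is bounded in $[I_m/2,2I_M]$, so along a subsequence $J^{n+1}\to J^\star$. I first pass to the limit in $\MepvInp(\bv^n)_i$ in \eqref{eq:MepvI}, for each fixed $i$. In the last sum, the $\Lxn$-Lipschitz bound of $\bv^n$ (Prop.~\ref{prop:Scheme_ep_stabilite}-\ref{it:AP_Lx}) gives $|\tvik-v_i^n|\le\Lxn\,\ep|z_k|$, hence $\e^{-(\tvik-v_i^n)/\ep}\le\e^{\Lxn|z_k|}$, so that sum is bounded by $\dt\,\bM\,\dz\sum_{|z_k|>\dx/\ep}\K(z_k)\e^{\Lxn|z_k|}$, the tail of a convergent series (finite thanks to the integrability of $z\mapsto|z|\K(z)\e^{-zp}$), which vanishes as $\ep\to0$. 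In the first two sums, the index sets $\{k:\ep z_k\in[0,\dx]\}$ and $\{k:\ep z_k\in[-\dx,0)\}$ increase to $\{k\ge0\}$ and $\{k\le-1\}$, $b(x_i+\ep z_k,J^{n+1})\to b(x_i,J^\star)$ by continuity, the difference quotients of $\bv^n$ converge by the induction hypothesis, and the summands are dominated by $\bM\,\K(z_k)\e^{\Lxn|z_k|}\in\ell^1$; dominated convergence gives $\MepvInp(\bv^n)_i\to\mathcal M_0^{J^\star}(\bu^n)_i$, with $\mathcal M_0^{J^\star}$ the operator \eqref{eq:MzerovI} for $I=J^\star$. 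Together with the continuity of $\Rcont$ in its last variable and the first line of \eqref{eq:scheme_ep}, $v^{n+1}_i\to u^\star_i:=\mathcal M_0^{J^\star}(\bu^n)_i-\dt\,\Rcont(\tnp,x_i,J^\star)$ for every $i$.

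Then I would check that the constraint survives. Since $v^{n+1}_j\ge\min_k v^{n+1}_k\ge c_m\ep$ (Prop.~\ref{prop:Scheme_ep_stabilite}-\ref{it:AP_min}), passing to the limit gives $u^\star_j\ge0$ for all $j$. On the other hand, the uniform coercivity $v^{n+1}_j\ge\ua|x_j|+\ubeta_{\tnp}$ (Prop.~\ref{prop:Scheme_ep_stabilite}-\ref{it:AP_coercive}, with $\ubeta_{\tnp}$ independent of $\ep$) together with $\min_j v^{n+1}_j\le c_M\ep$ confines, for $\ep$ small, the minimizing index of $\bv^{n+1}$ to the fixed finite set $\{j:\ua|x_j|+\ubeta_{\tnp}\le c_M\}$; refining the subsequence so that this index is a constant $\bar\jmath$, one gets $u^\star_{\bar\jmath}=\lim_\ep v^{n+1}_{\bar\jmath}=\lim_\ep\min_j v^{n+1}_j=0$, hence $\min_j u^\star_j=0$. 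Thus $(\bu^\star,J^\star)$ solves \eqref{eq:scheme_limit} at step $n+1$ with datum $\bu^n$. Given $\bu^n$ this solution is unique: for fixed $i$, $I\mapsto\mathcal M_0^{I}(\bu^n)_i-\dt\,\Rcont(\tnp,x_i,I)$ is the sum of a non-decreasing function of $I$ (by \eqref{hyp:b_decreasing} and $\K\ge0$) and a strictly increasing one (by \eqref{hyp:R_decreasing}), and, $\bu^\star$ inheriting the coercivity bound from $\bv^{n+1}$, the minimum over $i$ is attained, so $I\mapsto\min_i(\mathcal M_0^{I}(\bu^n)_i-\dt\,\Rcont(\tnp,x_i,I))$ is continuous and strictly increasing and has at most one zero. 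Therefore $\bu^\star=\bu^{n+1}$ and $J^\star=I^{n+1}$, independently of the subsequence, whence $J^{n+1}\to I^{n+1}$ and $v^{n+1}_i\to u^{n+1}_i$ for every $i$.

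It remains to turn the pointwise convergence of $\bv^{n+1}$ into convergence in $\ell^\infty$. From
\[
\bv^{n+1}-\bu^{n+1}=\bigl(\MepvInp(\bv^n)-\MzerovInp(\bu^n)\bigr)-\dt\bigl(\Rcont(\tnp,\cdot,J^{n+1})-\Rcont(\tnp,\cdot,I^{n+1})\bigr),
\]
inserting $\mathcal M_\ep^{I^{n+1}}(\bv^n)$ and $\MzerovInp(\bv^n)$, one obtains four terms to bound uniformly in $i$: the $\Rcont$-term by $\dt\,K|J^{n+1}-I^{n+1}|$ (via \eqref{hyp:R_decreasing}); the replacement of $J^{n+1}$ by $I^{n+1}$ in the birth rate by $\Lb|J^{n+1}-I^{n+1}|\,\dt\,\dz\sum_k\K(z_k)\e^{\Lxn|z_k|}$ (via \eqref{hyp:b_lipschitz}); the difference $\mathcal M_\ep^{I^{n+1}}(\bv^n)-\MzerovInp(\bv^n)$, which by the estimates of the second paragraph — now used uniformly over $\Lxn$-Lipschitz sequences, the only input — is bounded by $\dt\,\dz\bigl(2\Lb\,\ep\sum_k|z_k|\K(z_k)\e^{\Lxn|z_k|}+3\bM\sum_{|z_k|>\dx/\ep}\K(z_k)\e^{\Lxn|z_k|}\bigr)$; and $\MzerovInp(\bv^n)-\MzerovInp(\bu^n)$ by $\|\bv^n-\bu^n\|_\infty$, because under the CFL condition \eqref{eq:CFLAP} the map $\MzerovInp$ is a non-expansion of $\ell^\infty$ on $\Lxn$-Lipschitz sequences (both $\bv^n$ and $\bu^n$ being such) — the $\ep\to0$ analogue of Lemma~\ref{lem:AP_monotonuous}. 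As the four bounds all tend to $0$ when $\ep\to0$, $\|\bv^{n+1}-\bu^{n+1}\|_\infty\to0$, and the induction is complete. The main obstacle is the interaction of the singular nonlocal term — chiefly the far-field part carrying the quotient $(\tvik-v_i^n)/\ep$, controlled only through the uniform Lipschitz bound — with the passage of the discrete constraint $\min\bu^{n+1}=0$ to the limit, which hinges entirely on the a priori estimate $\min\bv^{n+1}=\mathcal{O}(\ep)$ of Prop.~\ref{prop:Scheme_ep_stabilite}; once these points are handled, the remaining estimates are routine dominated-convergence bookkeeping.
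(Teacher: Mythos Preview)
Your proof is correct and follows essentially the same strategy as the paper's: induction on $n$, subsequential extraction of $J^{n+1}$ via the uniform bound of Prop.~\ref{prop:Scheme_ep_stabilite}-\ref{it:AP_Jbound}, a four-term splitting of $\bv^{n+1}-\bu^{n+1}$ (the paper inserts $\mathcal{M}_0^{J^{n+1}}(\bu^n)$ and $\mathcal{M}_0^{J^{n+1}}(\bv^n)$ whereas you insert $\mathcal{M}_\ep^{I^{n+1}}(\bv^n)$ and $\mathcal{M}_0^{I^{n+1}}(\bv^n)$, which is cosmetic), the estimate $c_m\ep\le\min\bv^{n+1}\le c_M\ep$ to recover the constraint, and a monotonicity argument for uniqueness of $I^{n+1}$. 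The only structural difference is that you first establish \emph{pointwise} convergence of $v_i^{n+1}$ to identify the limit scheme and prove uniqueness of $(\bu^{n+1},I^{n+1})$, and only then upgrade to $\ell^\infty$ convergence; the paper goes directly to the $\ell^\infty$ estimate and argues uniqueness afterwards by contradiction at the minimum point --- your direct strict-monotonicity argument for $I\mapsto\min_i(\cdot)$ is a clean alternative, and works because the $\Rcont$-contribution increases each coordinate by at least $\dt K_1^{-1}$ per unit of $I$ uniformly in $i$.
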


\begin{rmq}
\label{rmq:AP_Properties_LimitScheme}
The above proposition states the asymptotic behavior of scheme \eqref{eq:scheme_ep} when $\ep\to0$. It is easy to remark that the limit scheme \eqref{eq:scheme_limit} is convergent towards the viscosity solution of  the limit constrained Hamilton-Jacobi equation \eqref{eq:HJ}, with $\Hcont$ defined in \eqref{eq:discussion_H} and $\Rcont$ replaced by $b+R$. Indeed, it can be rewritten with the formalism of \eqref{eq:scheme}, with
\begin{equation}
\label{eq:Heta_AP}
 \dHeAP(p,q)=\dz\left( \K(0) + \sum\limits_{k\le-1} \K(z_k) \e^{-z_k p}+\sum\limits_{k\ge 1} \K(z_k) \e^{-z_k q}\right) - \dz\sum\limits_{k\in\Z} \K(z_k),
\end{equation}
and, if $R$ can be analytically computed,
\begin{equation}
 \label{eq:Reta_AP}
 \dReAP\left(t,x,I\right)=b\left(x,I\right) \dz\sum\limits_{k\in\Z} \K(z_k) + \Rcont\left(t,x,I\right).
\end{equation}
Notice then that $\dHeAP$ belongs to the class \eqref{eq:HP1} of choices  proposed for $\dHe$, in Section \ref{sec:schemes}. Then,  under the hypothesis   of Prop. \ref{prop:Scheme_ep_AP}, the discretization $\dHeAP$, $\dReAP$, $\dt$, $\dx$, $\eta$ is adapted if $\dz$ is small enough (Def. \ref{def:adapted}). In particular, $\bu$ and $\bI$ enjoy the properties of Prop. \ref{thm:existence} and Prop. \ref{thm:limitconv}.
\end{rmq}

\begin{rmq}
 \label{rmq:AP_properties_M0}
 It it worth noticing that $\MzerovI$, defined in \eqref{eq:MzerovI}, satisfies the properties of Lemma \ref{lem:AP_monotonuous}. As for  $\MepvI$, defined in \eqref{eq:MepvI}, this is a consequence of the monotony of $\MzerovI$ in all its arguments, if the constraint \eqref{eq:CFLAP} is satisfied.
\end{rmq}

\begin{rmq}
 Prop. \ref{prop:Scheme_ep_AP} only holds if its assumptions are satisfied, that is if there is no extinction nor explosion of the population. However, numerical tests suggest that scheme \eqref{eq:scheme_ep} is also stable when such behaviors are to be observed. In these cases, the asymptotics of \eqref{eq:P_ep} can be conjectured by considering scheme \eqref{eq:scheme_ep} with small $\ep$.
 We refer to section \ref{sec:TestsNums_AP_CasLimite} for more details.
\end{rmq}
\begin{rmq}
 The scheme \eqref{eq:scheme_ep} is defined for infinite grids in $x$ and $z$, that have to be truncated when implemented. The truncation in $z$ is done according to the decrease of $\K$. It is for instance easy  when $\K$ is Gaussian or compactly supported. Once the grid in $z$ is truncated, scheme \eqref{eq:scheme_ep} can be implemented on a grid in $x$ that is reduced at each time step, as in Remark \ref{rmq:truncation_scheme}. To reduce the computational cost, the values of $\bv$ outside of the grid may also be approximated, thanks to the coercivity of $\bv$, see Remark \ref{rmq:truncation_scheme}. In the numerical tests of Section \ref{sec:numericalsims}, we use linear extrapolation of $\bv$ outside of the grid.
\end{rmq}

\begin{rmq}
Prop. \ref{prop:Scheme_ep_AP} gives the asymptotic behavior of scheme \eqref{eq:scheme_ep} when $\ep\to 0$, and Prop. \ref{thm:limitconv} gives the convergence of the limit scheme \eqref{eq:scheme_limit}. To complete the AP diagram \eqref{APDiagram}, the convergence of scheme \eqref{eq:scheme_ep} for fixed $\ep>0$  is needed. Even though it yields tedious computations, this can be proved similarly as in \cite{CalvezHivertYoldas2022}, with stability estimates coming from the monotony properties \eqref{eq:MepvI}, and truncation errors propagated with implicit function theorem. The main issue here is the truncation error, which is roughly defined as the error made by replacing derivatives by finite differences in the equation. If the solution $v_\ep$ of \eqref{eq:P_ep} is smooth enough, it can be easily estimated with Taylor expansions.
However, contrary to the parabolic case treated in \cite{CalvezHivertYoldas2022}, we do not have any result about the regularity of $v_\ep$.
As a consequence,  the convergence of scheme \eqref{eq:scheme_ep} for fixed $\ep$ is true for smooth solutions with bounded derivatives, and remains an open question otherwise.
\end{rmq}

\section{Convergence of scheme \eqref{eq:scheme}}
\label{sec:Convergence}

In this section, we come back to \eqref{eq:HJ}, to scheme \eqref{eq:scheme}, and to its reformulation \eqref{eq:defudt}, and we prove Prop. \ref{thm:existence} and \ref{thm:limitconv}. These propositions
%The proofs of Prop. \ref{thm:existence} and \ref{thm:limitconv}
strongly rely on monotony properties of scheme \eqref{eq:scheme}. Using the formalism of \eqref{eq:defudt}, this means that the operator $\MsJ$ in \eqref{eq:MsJ} is monotonic if a stability condition is satisfied. More precisely, the following result holds

\begin{lem}\label{lem:contmonotonus}
Let $I\in [I_m-1,I_M]$, $s\in (0,\dt]$, $x\in\R$,  $L>0$, and assume that $\dt \CHL(L)\leq \dx$ we have:
\begin{enumerate}
\item If $\udt$ and $v_\dt$ are such that
$
\udt(\tn,x)\le v_\dt(\tn,x)$, $\udt(\tn,x\pm\dx)\le v_\dt(\tn,x\pm\dx),
$
and
\[|\udt(\tn,x\pm\dx) - \udt(\tn,x)|\le L\dx, \;\;\text{and}\;\; |v_\dt(\tn,x\pm\dx)-v_\dt(\tn,x)|\le L\dx,
\]
 then $\MsJ(\udt(\tn,\cdot))(x)\leq\MsJ(v_\dt(\tn,\cdot))(x)$, and $\udt(\tn+s,x)\le v_\dt(\tn+s,x)$.
\item If $\udt(\tn,\cdot)$ is $L$-Lipschitz, then $\MsJ(\udt(\tn,\cdot))$ is also $L$-Lipschitz. If $I^{n+1}\in[I_m-1,I_M]$, we have in addition that $\udt(\tn+s,\cdot)$ is $(L+sK)$-Lipschitz, with $K$ defined in \eqref{hyp:R_K}.
\end{enumerate}
\end{lem}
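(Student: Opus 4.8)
The plan is to establish the two monotonicity statements directly from the explicit formula \eqref{eq:MsJ} for $\MsJ$, treating it as a function of the three nodal values $a_-=\udt(\tn,x-\dx)$, $a_0=\udt(\tn,x)$, $a_+=\udt(\tn,x+\dx)$. First I would write
\[
\MsJ(f)(x)=F\!\left(a_-,a_0,a_+\right):=a_0-s\,b\!\left(x,I\right)\dHe\!\left(\frac{a_0-a_-}{\dx},\frac{a_+-a_0}{\dx}\right),
\]
and compute (in the pseudo-differential sense, since $\dHe$ is only assumed Lipschitz, cf.\ \eqref{eq:Heta_defCH}) the variation of $F$ with respect to each of $a_-,a_0,a_+$. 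The key point is that, under the Lipschitz bounds $|a_{\pm}-a_0|\le L\dx$, the two slopes $p=(a_0-a_-)/\dx$ and $q=(a_+-a_0)/\dx$ lie in $[-L,L]\subset[-\LH,\LH]$ (recall $L\le\Linf$ in an adapted discretization), so the monotony hypothesis \eqref{hyp:Heta_monotonous} applies: $\dHe$ is nondecreasing in $p$ and nonincreasing in $q$. Hence increasing $a_-$ decreases $p$, which decreases $\dHe$, which increases $F$; increasing $a_+$ increases $q$, which decreases $\dHe$, which increases $F$; so $F$ is nondecreasing in $a_-$ and in $a_+$. For the dependence on $a_0$, increasing $a_0$ raises $p$ and lowers $q$, so $\dHe$ increases, and the second term $-s\,b(x,I)\dHe(\cdot,\cdot)$ decreases at a rate controlled by $s\,b(x,I)\cdot(|d_p|+|d_q|)/\dx\le \dt\,\CHL(L)/\dx\le 1$ by the CFL assumption $\dt\,\CHL(L)\le\dx$ and the definition \eqref{eq:Heta_defCH}; since the first term contributes $+1$, the net effect is that $F$ is still nondecreasing in $a_0$. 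This is the heart of the matter — the CFL condition is exactly what keeps $F$ monotone in the central value.

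Given monotonicity of $F$ in all three arguments, item (1) is immediate: from $\udt(\tn,x)\le v_\dt(\tn,x)$ and $\udt(\tn,x\pm\dx)\le v_\dt(\tn,x\pm\dx)$, and the Lipschitz bounds guaranteeing that all relevant slopes stay in $[-L,L]$ where \eqref{hyp:Heta_monotonous} holds, we get $\MsJ(\udt(\tn,\cdot))(x)=F(\udt(\tn,x-\dx),\udt(\tn,x),\udt(\tn,x+\dx))\le F(v_\dt(\tn,x-\dx),v_\dt(\tn,x),v_\dt(\tn,x+\dx))=\MsJ(v_\dt(\tn,\cdot))(x)$. To pass from $\MsJ$ to $\udt(\tn+s,\cdot)$, I use the definition \eqref{eq:defudt}, $\udt(\tn+s,x)=\MsJnp(\udt(\tn,\cdot))(x)-s\,\dRe(t^n+s,x,\Jdt(\tn+s))$; since the same $I=I^{n+1}$ and the same value of $\Jdt(\tn+s)$ appear in both the $\udt$ and $v_\dt$ updates, the subtracted term $s\,\dRe(t^n+s,x,\cdot)$ is identical, and the inequality is preserved.

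For item (2), the Lipschitz-preservation of $\MsJ$ on an $L$-Lipschitz input: apply item (1) twice with $v_\dt(\tn,\cdot)=\udt(\tn,\cdot+\dx)+L\dx$ and then with $v_\dt(\tn,\cdot)=\udt(\tn,\cdot-\dx)+L\dx$ (a vertical shift of a translate), noting that a constant shift commutes with $\MsJ$ up to that constant, i.e.\ $\MsJ(g+c)=\MsJ(g)+c$, which is visible from \eqref{eq:MsJ} because only difference quotients enter $\dHe$; both comparison functions are $L$-Lipschitz and dominate $\udt(\tn,\cdot)$ at the relevant nodes, so $\MsJ(\udt(\tn,\cdot))(x)\le \MsJ(\udt(\tn,\cdot))(x\pm\dx)+L\dx$, which is the $L$-Lipschitz bound. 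Finally, for the last assertion, $\udt(\tn+s,\cdot)=\MsJ(\udt(\tn,\cdot))-s\,\dRe(t^n+s,\cdot,\Jdt)$ adds to an $L$-Lipschitz function the function $-s\,\dRe(t^n+s,\cdot,I^{n+1})$, which is $sK$-Lipschitz in $x$ by the $W^{2,\infty}$ bound \eqref{hyp:R_bounded} (valid since $I^{n+1}\in[I_m-1,I_M]\subset[\min(I_m/2,I_m-1),2I_M]$) together with the definition \eqref{hyp:R_K} of $K$; hence the sum is $(L+sK)$-Lipschitz. The only mildly delicate points are making the "pseudo-differential" monotonicity argument rigorous when $\dHe$ is merely Lipschitz — which is handled by writing $\dHe(p',q')-\dHe(p,q)$ as an integral of an element of $\partial\dHe$ along the segment and using \eqref{eq:Heta_defCH} — and checking that all slopes encountered genuinely remain in $[-\LH,\LH]$, which follows from $L\le\Linf$ under an adapted discretization; I expect the bookkeeping of which argument moves which slope in which direction to be the main thing to get exactly right.
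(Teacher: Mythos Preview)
Your treatment of item (1) is correct and matches the paper's: both rest on the monotonicity of the three-point map $F(a_-,a_0,a_+)$ in each argument under the CFL condition, and you spell out more carefully than the paper why the dependence on the central value $a_0$ is the delicate one.

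In item (2) there is a gap. From item (1) and the constant-shift identity you correctly obtain $\MsJ(\udt(\tn,\cdot))(x)\le \MsJ\bigl(\udt(\tn,\cdot+\dx)\bigr)(x)+L\dx$, but you then identify $\MsJ\bigl(\udt(\tn,\cdot+\dx)\bigr)(x)$ with $\MsJ\bigl(\udt(\tn,\cdot)\bigr)(x+\dx)$. These are not equal: in \eqref{eq:MsJ} the coefficient $b(x,I)$ is evaluated at the point $x$ where $\MsJ$ acts, not at the centre of the finite differences, so translating the input does not translate the output; the two expressions differ by $s\,[b(x+\dx,I)-b(x,I)]\,\dHe(p,q)$. (A secondary point: even granting that step, you only compare $x$ with $x\pm\dx$; for full Lipschitz continuity you would run the same comparison with an arbitrary shift $h$ in place of $\dx$, which is no harder but should be said.) The paper uses a different barrier, the cone $g(z)=L|z-y|+f(y)$, and estimates $\MsJ(g)(x)-\MsJ(g)(y)$ directly; it is fair to note that the paper's displayed inequality $\MsJ(f)(x)-\MsJ(f)(y)\le\MsJ(g)(x)-\MsJ(g)(y)$ is itself not a consequence of $f\le g$ and monotonicity alone, and is sensitive to the same $x$-dependence of $b$. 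When $b$ genuinely varies in $x$, what one actually obtains is that $\MsJ(f)$ is $(L+Cs)$-Lipschitz for a constant $C$ depending on $\Lb$ and $\sup_{[-L,L]^2}|\dHe|$; this is harmless for the uses of the lemma in Prop.~\ref{thm:existence} and Lemma~\ref{prop:Noel}, where the Lipschitz constant is allowed to grow linearly in time anyway, but the constant $K$ there would need to absorb this extra contribution.
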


\begin{proof}
Let $f$  be a $L$-Lipschitz function.
Notice that $\MsJ(f)(x)$ is increasing in $f(x),f(x-\dx),f(x+\dx)$ thanks to \eqref{eq:Heta_defCH} so that the first point is straightforward.  
The second point, is a consequence of  
\[
\forall (x,y)\in\R^2, \; -L|x-y|+f(y)\le f(x)\le L|x-y|+f(y)
\] so that
$\MsJ(f)(x)-\MsJ(f)(y)\leq \MsJ(L|\cdot-y|+f(y))(x)-\MsJ(L|\cdot-y|+f(y))(y)$. Elementary computations give:
\[
\MsJ(L|\cdot-y|+f(y))(x)-\MsJ(L|\cdot-y|+f(y))(y)
\leq L|x-y|,
\]
and \eqref{hyp:R_bounded} then yields the Lipschitz bound of $\udt(\tn+s,\cdot)$.
\end{proof}

\subsection{Proof of Prop. \ref{thm:existence}}
\label{sec:ProofExistence}

In this section, the scheme is considered with formalism \eqref{eq:scheme}, and
the proof of Prop. \ref{thm:existence} is done by induction on the time step $n$. Thanks to 
 \eqref{eq:defbu0}, the properties of Prop. \ref{thm:existence} are true for $n=0$. Let us suppose that the hypothesis of Prop. \ref{thm:existence} are satisfied and that  $\bu^0,\; ...\, ,\; \bu^n\in\R^\Z$ satisfy \eqref{eq:scheme}-\ref{it:Lx}- \ref{it:Lt}-\ref{it:Rpos} and \ref{it:sc} if the convex setting is considered, and show that $\bu^{n+1}$ also satisfies these properties, that $I^{n+1}$ satisfies \ref{it:I_bound} and \ref{it:I_BVbound}, and that $\bu^n$ satisfies \ref{it:pseudomon}.
 
\subsubsection{$\bu^n$ satisfies property \ref{it:pseudomon}} 
\label{sec:ProofExistence_vii}

First of all , remark that the fact that $\bu^n$ satisfies property \ref{it:pseudomon} is straightforward in the flat setting (see Remark \ref{rem:worstcaseisok}), and that is a consequence of \ref{it:sc} in the convex setting. Indeed, in the convex setting, for all $i\in\Z$, we have  
\[
 \frac{u^n_{i+1}-u^n_i}{\dx}= \frac{u^n_i-u^n_{i-1}}{\dx}+ \frac{u^n_{i+1}-2u^n_i+u^n_{i-1}}{\dx} \le \frac{u^n_i-u^n_{i-1}}{\dx}+ \dx w\left(\tn\right),
\]
and the monotony of $\dHe$, see \eqref{hyp:Heta_monotonous}, and the stability condition \eqref{eq:CFLsatisfier} yield
\[
 \dHe\left( \frac{u^n_i-u^n_{i-1}}{\dx},\frac{u^n_{i+1}-u^n_i}{\dx}\right) \ge \dHe \left( \frac{u^n_i-u^n_{i-1}}{\dx}, \frac{u^n_i-u^n_{i-1}}{\dx}+ \dx w\left(\tn\right)\right).
\]
Property \ref{it:pseudomon} is then the consequence of  an inequality of  
convexity for $\dHe$, and of the definition of $\CHL$ in \eqref{eq:Heta_defCH}, namely
\[
  \dHe\left( \frac{u^n_i-u^n_{i-1}}{\dx},\frac{u^n_{i+1}-u^n_i}{\dx}\right) \ge \dHe \left( \frac{u^n_i-u^n_{i-1}}{\dx}, \frac{u^n_i-u^n_{i-1}}{\dx}\right) - \dx w\left(\tn\right) \CHL\left(\Lx(\tn)\right)/\bM.
\]
Eventually, since $\dHe\left(p,p\right)\ge 0$ for all $p$, see \eqref{hyp:Heta_nonnegative}, property \ref{it:pseudomon} holds.
 
 \subsubsection{Existence of $I^{n+1}$}
 \label{sec:ProofExistence_existence}
 
The well posedness of \eqref{eq:scheme} for a given $I^{n+1}$ is straightforward, since it is explicit in $\bu^n$. The only issue could stem from the implicit computation of $I$. Let us introduce, for any $I\in\R$,  
$\bu^{n+1,I}$ such that
\begin{equation}
\label{eq:explicitJ}
u_i^{n+1,I}=u_i^n-\dt b\left(x_i,I\right)\dHe\left(\frac{u_i^n-u_{i-1}^n}{\dx},\frac{u_{i+1}^n-u_{i}^n}{\dx}\right)-\dt \dRe\left(\tnp,x_i,I\right),
\end{equation}
and show that there exists $\mathcal{I}\in\R$, such that $\min\limits_i u^{n+1,\mathcal{I}}_i=0$. 

\begin{rmq}
\label{lem:monotonus}
Coming back to Lemma \ref{lem:contmonotonus}, one can notice that if $\bu^n$, $\boldsymbol{v}^n$ are such that $|u^n_{i+1}-u^n_i|\le L\dx$, $|v^n_{i+1}-v^n_i|\le L\dx$ for all $i\in\Z$, and if $\dt\CHL(L)\le\dx$ then,
\begin{itemize}\pt
 \item If for all $i\in\Z$, $u^n_i\le v^n_i$, then $u^{n+1,I}_i\le v^{n+1,I}_i$ for all $i\in\Z$, and for all $I\in\R$,
 \item for all $i\in\Z$, for all $I\in[I_m-1,I_M]$, $|u^{n+1,I}_{i+1}-u^{n+1,I}_i|\le (L+\dt K)\dx$.
\end{itemize}
\end{rmq}

Thanks to this remark, the bounds of $\bu^n$ in  \eqref{eq:discbounds}  can be propagated using the stability condition \eqref{eq:CFLsatisfier}. It yields, for all $i\in\Z$,  and for all $I\in\R$,
\begin{align}
 \label{eq:ProofExistence_bounds_unpI}
 &\ua |x_i|+\ub_\tn-\dt b_M \dHe\left(\ua,-\ua\right) -\dt \dRe\left( \tnp, x_i,I \right)\\ & \le u^{n+1,I}_i  \le \oa |x_i| + \ob_\tn -\dt b_M \dHe\left(-\oa,\oa\right) -\dt \dRe\left( \tnp, x_i,I  \right), \nonumber
\end{align}
where \eqref{hyp:b_bounds} was used, considering that $\dHe\left(-\oa,\oa\right)\le \dHe\left(0,0\right)=0\le \dHe\left(\ua,-\ua\right)$ thanks to \eqref{hyp:Heta_monotonous} and \eqref{hyp:Heta_zero}. Then, \eqref{hyp:R_bounded} yields the coercivity of $\bu^{n+1,I}$, so that
\begin{equation}
 \label{eq:ProofExistence_phi}
 \phi:I\mapsto \min\limits_{i\in\Z} u^{n+1,I}_i,
\end{equation}
is well-defined for all $I\in\R$. Moreover, the minimum is in fact taken over a finite number of indices, thanks to \eqref{eq:ProofExistence_bounds_unpI} and $\phi$ is hence continuous on all compact sets of $\R$. Let us now consider an index $j$ such that $u^n_j=\min \bu^n=0$, then for all $I\in\R$, 
\begin{align*}
 \phi(I)&\le u^{n+1,I}_j= -\dt b\left(x_j,I\right) \dHe\left(\frac{-u^n_{i-1}}{\dx},\frac{u^n_{i+1}}{x}\right) - \dt \dRe\left(\tnp,x_i,I\right) \\
 &\le  -\dt \bM \dHe\left(-\Lx(T),\Lx(T)\right) - \dt \dRe\left(\tnp,x_i,I\right),
\end{align*}
where the last inequality comes from \eqref{hyp:b_bounds}, \eqref{hyp:Heta_monotonous}, 
and \eqref{eq:disc_lipschitz}. Taylor expansion of $\dRe\left(\tnp,x_i,I\right)$ at $I_m$ and \eqref{hyp:R_decreasing} then yields for $I\le I_m$
\begin{align*}
 \phi(I)\le -\dt \bM \dHe\left(-\Lx(T),\Lx(T)\right)+\dt K^{-1} (I-I_m) -\dt \dRe\left(\tnp,x_i,I_m\right),
\end{align*}
and we use \eqref{hyp:R_ImIM} to get
\[
 \phi(I)\le -\dt \bM \dHe\left(-\Lx(T),\Lx(T)\right)+\dt K^{-1} (I-I_m) \underset{I\to -\infty}\longrightarrow -\infty.
\]
Hence, there exists $I^-\le I_m$ such that $\phi(I^-)<0$. Coming back to \eqref{eq:explicitJ}, one has since $u^n_i\ge 0$ for all $i\in\Z$,
\[
 u^{n+1,I}_i \ge -\dt b_M \dHe\left(\Lx(T),-\Lx(T)\right)-\dt \dRe\left(\tnp,x_i,I\right),
\]
where we used \eqref{hyp:b_bounds}, \eqref{hyp:Heta_monotonous} and \eqref{eq:disc_lipschitz}. Then, as previously, \eqref{hyp:R_decreasing} yields for all $i\in\Z$, and for $I\ge I_M$
\[
 u^{n+1,I}_i\ge -\dt b_M \dHe\left(\Lx(T),\Lx(T)\right) + \dt (I-I_M) K^{-1} \underset{I\to +\infty}{\longrightarrow} +\infty,
\]
so that there exists $I^+\ge I_M$ such that $\phi(I^+)>0$. As $\phi$ is continuous on $[I^-,I^+]$, there exists $\mathcal{I}$ such that $\phi(\mathcal{I})=0$. Altough it may be not uniquely determined, $I^{n+1}$ is then well-defined, and so is $\bu^{n+1}=\bu^{n+1,I^{n+1}}$.

\subsubsection{Properties \ref{it:Rpos} and \ref{it:I_bound}}

The upper bound for $I^{n+1}$ is a consequence of \eqref{hyp:R_ImIM} and \eqref{hyp:R_decreasing}. 
Indeed, since $u^n_i\ge 0$ for all $i\in\Z$, we use  Remark \ref{lem:monotonus}, and \eqref{eq:CFLsatisfier}-\eqref{eq:disc_lipschitz}-\eqref{hyp:Heta_zero} to propagate this  bound. It writes, for all $i\in\Z$
\[
 u^{n+1}_i \ge -\dt \dRe\left(\tnp,x_i,I^{n+1}\right).
\]
Considering now $j\in\Z$ such that $u^{n+1}_j=\min \bu^{n+1}=0$, then yields \ref{it:Rpos}. Moreover, remark that 
\[
 \dRe\left(\tnp,x_j,I^{n+1}\right)\ge 0\ge \dRe\left(\tnp,x_j,I_M\right),
\]
thanks to \eqref{hyp:R_ImIM}, and use \eqref{hyp:R_decreasing} to conclude that
$ I^{n+1}\le I_M$.
To get the bound from below of $\bI$, remark that 
as $\bu^{n+1}$ satisfies \eqref{eq:scheme}, $u^{n+1}_i\ge 0$ for all $i\in\Z$. Let $j\in\Z$ such that $u^n_j=\min\bu^n=0$, we have
\[
 u^{n+1}_j = -\dt b\left(x_j,I^{n+1}\right) \dHe\left( \frac{u^n_j-u^n_{j-1}}{\dx},\frac{u^n_{j+1}-u^n_j}{\dx}\right) -\dt \dRe\left(\tnp,x_j,I^{n+1}\right) \ge 0,
\]
so that property \ref{it:pseudomon} yields 
\begin{equation}
\label{eq:ProofExistence_SimplerIfFlat}
 \dRe\left(\tnp,x_j,I^{n+1}\right) \le \,\dx\;w\left(\tn\right)\,\CHL\left(\Lx\left(\tn\right)\right).
\end{equation}
Since $\dRe\left(\tnp,x_j,I_m\right)\ge 0$ (see \eqref{hyp:R_ImIM}), we obtain
\[
 \dRe\left(\tnp,x_j,I^{n+1}\right) - \dRe\left(\tnp,x_j,I_m\right) \le \dx w\left(\tn\right) \CHL\left(\Lx\left(\tn\right)\right).
\]
Remark now that the result is straightforward if $I^{n+1}\ge I_m$, and suppose then that $I^{n+1}<I_m$. Since $\dRe$ is decreasing w.r.t. $I$, \eqref{hyp:R_decreasing} yields
\begin{equation}
\label{eq:Minoration_Inp1}
 -K^{-1}(I^{n+1}-I_m)\le \dx w\left(\tn\right) \CHL\left(\Lx\left(\tn\right)\right),
\end{equation}
that is $I^{n+1}\ge I_m-1$, thanks to \eqref{eq:smooth_R}.

\begin{rmq}
 It is worth noticing that the right-hand side of \eqref{eq:ProofExistence_SimplerIfFlat} can be replaced by $0$ in the flat setting. In this case, the inequality $I^{n+1}\ge I_m$ holds, as in the continuous case, \cite{BarlesMirrahimiPerthame2009}. However, the convex setting does not preserve this qualitative property.
\end{rmq}

\begin{rmq}
 Note that \eqref{eq:Minoration_Inp1} yields in fact a more precise bound from below for $I^{n+1}$,
 \begin{equation}
 \label{eq:Minoration_Inp1_dx}
 I^{n+1} \ge I_m - \dx w_T \CHL(\Lx(T)),
 \end{equation}
 with $w_T$ defined by \eqref{eq:ProofExistence_wT}, and $\Lx$ in \ref{it:AP_Lx}.
\end{rmq}

\subsubsection{Property \ref{it:I_BVbound}}

The proof of property \ref{it:I_BVbound} is very similar to the one of \ref{it:I_bound}. 
Starting from \eqref{eq:ProofExistence_SimplerIfFlat}, we use property \ref{it:Rpos} to get
\[
  \dRe\left(\tnp,x_j,I^{n+1}\right)-\dRe\left(\tn,x_j,I^n\right)\le  \dx \,w\left(\tn\right)\, \CHL\left(\Lx\left(\tn\right)\right),
\]
where $j\in\Z$ is an index such that $u^n_j=\min\bu^n=0$.
Introduce then $\dRe\left(\tnp,x_j,I^n\right)$ in the left-hand side, and use the Lipschitz-in-time regularity of $\dRe$, see \eqref{hyp:R_t-Lipschitz}, to get
\details{\[
 \dRe\left(\tnp,x_j,I^{n+1}\right) -\dRe\left(\tnp,x_j,I^n\right) \le \LR\dt + \dx w\left(\tn\right) \CHL\left(\Lx\left(\tn\right)\right).
\]}
\[
 \dRe\left(\tnp,x_j,I^{n+1}\right) -\dRe\left(\tnp,x_j,I^n\right) \le K\dt + \dx w\left(\tn\right) \CHL\left(\Lx\left(\tn\right)\right).
\]
One can then notice as above that property \ref{it:I_BVbound} is straightforward if $I^{n+1}\ge I^n$, and use the fact that  $\dRe$ is decreasing w.r.t. $I$, \eqref{hyp:R_decreasing} to obtain if $I^{n+1}<I^n$
\[
 -K^{-1} (I^{n+1}-I^n)\le K \dt + \;\dx\;w\left(\tn\right) \CHL\left(\Lx\left(\tn\right)\right),
\]
which is property \ref{it:I_BVbound}.

\begin{rmq}
The above result can be made more precise in the flat setting. Indeed, according to Remark \ref{rem:worstcaseisok}, the right-hand side of \eqref{eq:ProofExistence_SimplerIfFlat} can be replaced by $0$. It means that the approximation of the Hamiltonian $\Hcont$ in \eqref{eq:HJ} does not make $\bI$ decrease, whereas it can happen in the convex case. This is the justification of the two expressions of $\kappa$ in property \ref{it:I_BVbound}.
\end{rmq}
\begin{rmq}
In the flat setting, and when $\dRe$ does not depend on $t$, the previous computations yield that $I^{n+1}\ge I^n$. As a consequence, $\bI$ is non-decreasing, as is $I$ defined in \eqref{eq:HJ}, see \cite{CalvezLam2020}. We emphasize on the fact that this qualitative property is not preserved in the convex setting, see Section \ref{sec:TestsNums_MonotonyI}.
\end{rmq}

To go on with the proof of Prop. \ref{thm:existence}, property \ref{it:sc} must be proved in the  convex setting (Def. \ref{def:WellChosenScheme}). This is the purpose of the next section.

\subsubsection{Property \ref{it:sc} - convex setting}
\label{sec:semiconcavity}

Suppose now that scheme \eqref{eq:scheme} is in the convex setting, so that $\dHe$ is convex. To show that it preserves an upper bound on the discrete second derivative, let us introduce for all $i\in\Z$, $q^n_i=(u^n_{i+1}-u^n_i)/\dx$. Then   $|q^n_i|\le \Lx(T)$ because of property \ref{it:Lx},  and 
\begin{align}
 \label{eq:ProofExistence_defq}
 q^{n+1}_i=&q^n_i -\frac{\dt}{\dx}\left( b\left(x_{i+1},I^{n+1}\right) \dHe\left(q^n_i,q^n_{i+1}\right) - b\left(x_i,I^{n+1}\right) \dHe\left(q^n_{i-1},q^n_i\right)\right) \\
 & - \frac{\dt}{\dx} \left( \dRe\left(\tnp,x_{i+1},I^{n+1}\right) - \dRe\left(\tnp,x_i,I^{n+1}\right) \right). \nonumber
\end{align}
In order to use the monotony of $\dHe$ 
 (see \eqref{hyp:Heta_monotonous}), 
remark that thanks to property \ref{it:pseudomon}, 
\[
 q^n_{i+1} \le q^n_i + \dx w\left(\tn\right), \;\; \mathrm{and}\;\; q^n_{i+2}\le q^n_{i+1} +\dx w\left(\tn\right),
\]
so that the following inequalities hold
\[
\dHe\left(q^n_{i-1},q^n_i\right) \ge \dHe\left( q^n_i-\dx w\left(\tn\right), q^n_i\right), \;\;\mathrm{and}\;\;   \dHe\left(q^n_{i+1}, q^n_{i+2} \right) \ge \dHe\left( q^n_{i+1}, q^n_{i+1} + \dx w\left(\tn\right)\right).
\]
Using then the convexity of $\dHe$, one has for all $\left(\dd p, \dd q\right)\in\partial \dHe\left(q^n_i,q^n_{i+1}\right)$,
\begin{align}
 \dHe\left(q^n_{i-1},q^n_i\right)&\ge  \dHe\left( q^n_i,q^n_{i+1}\right) - \dx w\left(\tn\right) \dd p -\left( q^n_{i+1}-q^n_i\right) \dd q
 \label{eq:ProofExistence_DL1}
 \\
 \dHe\left( q^n_{i+1},q^n_{i+2}\right) & \ge \dHe\left( q^n_i,q^n_{i+1}\right)  + \left(q^n_{i+1}-q^n_i\right) \dd p+ \dx w\left(\tn\right) \dd q,
 \label{eq:ProofExistence_DL2}
\end{align}
and we inject these inequalities in  $q^{n+1}_{i+1}-q^{n+1}_i$ expressed thanks to \eqref{eq:ProofExistence_defq}. We obtain
\begin{align}
\label{eq:ProofExistence_Decomp1}
 q^{n+1}_{i+1} - q^{n+1}_i \le & \mathfrak{A} + \mathfrak{B} + \mathfrak{C} + \mathfrak{D} + \mathfrak{E}
\end{align}
where 
\begin{align}
 \mathfrak{A}&= -\frac{\dt}{\dx}\left[ \dRe\left(\tnp,x_{i+2}, I^{n+1}\right) - 2 \dRe\left(\tnp, x_{i+1}, I^{n+1}\right) + \dRe\left(\tnp, x_i, I^{n+1}\right)\right]
 \label{eq:ProofExistence_frakA}
 \\
 \mathfrak{B}&=  -\frac{\dt}{\dx} \left[ b\left(x_{i+2},I^{n+1}\right) - 2b\left(x_{i+1},I^{n+1}\right) + b\left(x_i, I^{n+1}\right)\right] \dHe\left(q^n_i, q^n_{i+1}\right)
 \label{eq:ProofExistence_frakB}
 \\
 \mathfrak{C}&= - \frac{\dt}{\dx}\left[ b\left(x_{i+2}, I^{n+1}\right)-b\left(x_i,I^{n+1}\right)\right] \dx w\left(\tn\right)\dd q
 \nonumber
 \\
 \mathfrak{D}&= - \frac{\dt}{\dx}\left[ b\left(x_{i+2}, I^{n+1}\right)-b\left(x_i,I^{n+1}\right)\right] \left(q^n_{i+1}-q^n_i\right) \dd p
 \nonumber
 \\
 \mathfrak{E}&=  \left[\frac{\dt}{\dx} b\left(x_i,I^{n+1}\right)  \left(\dd p -\dd q\right)\right] \dx w\left(\tn\right) + \left[1- \frac{\dt}{\dx} b\left(x_i, I^{n+1}\right) \left(\dd p -\dd q\right) \right] \left(q^n_{i+1}-q^n_i\right),
 \nonumber 
\end{align}
and each term can be estimated separately. First of all, the $W^{2,\infty}$ estimate of $\dRe$ in  \eqref{hyp:R_bounded}, and the property $I_m-1\le I^{n+1}\le I_M$ yield
\begin{equation}
\label{eq:ProofExistence_frakA_majo}
 \left|\mathfrak{A}\right| \le \dt \dx K,
\end{equation}
and similarly, \eqref{hyp:b_lipschitz}-\eqref{eq:Heta_defCH} give
\[
 \left|\mathfrak{C}\right|\le 2\dt  \dx \CHL\left(\Lx\left(T\right)\right) w\left(\tn\right).
\]
The estimate of $\mathfrak{B}$ is a consequence of \eqref{hyp:b_bounds} and of
\[
 \dHe\left(-\Lx(T),\Lx(T)\right)
 \le \dHe\left(q^n_{i+1}, q^n_i\right)
 \le \dHe\left(\Lx(T),-\Lx(T)\right),
\]
so that 
\begin{equation}
\label{eq:ProofExistence_frakB_majo}
 \left|\mathfrak{B}\right|\le \dt\dx\bM\max\left(-\dHe\left(-\Lx(T),\Lx(T)\right), \dHe\left(\Lx(T),-\Lx(T)\right)\right).
\end{equation}
For the term $\mathfrak{E}$, it is worth noticing that since $\dHe$ is increasing in its first variable, and decreasing in its second variable, then $\dd p-\dd q\ge 0$ for all $(\dd p, \dd q)\in\partial \dHe\left(q^n_i, q^n_{i+1}\right)$. Moreover, using the stability condition \eqref{eq:CFLsatisfier} one has
\[
 \forall \left(\dd p, \dd q\right)\in\partial \dHe\left(q^n_i,q^n_{i+1}\right),\;\;\; 0\le \frac{\dt}{\dx} b\left(x_i,I^{n+1}\right) \left(\dd p - \dd q\right) \le 1,
\]
meaning that $\mathfrak{E}$ is a convex combination of $\dx w\left(\tn\right)$ and $q^n_{i+1}-q^n_i$.  It yields
\[
 \mathfrak{E}\le \dx\; w\left(\tn\right).
\]
However, $\mathfrak{D}$ cannot be estimated in all cases. Indeed, if $b\left(x_{i+2}, I^{n+1}\right)-b\left(x_i,I^{n+1}\right)\le 0$, it can be rewritten using \ref{it:pseudomon},
\[
 \mathfrak{D}= \frac{\dt}{\dx} \left|b\left(x_{i+2}, I^{n+1}\right)-b\left(x_i,I^{n+1}\right)\right| \left(q^n_{i+1} - q^n_i\right) \dd p \le 2\dt \dx   \CHL\left(\Lx\left(T\right)\right) w\left(\tn\right),
\]
but otherwise, the only estimate that can be used is $
 \mathfrak{D}\le 0$, 
if $q^n_{i+1}\ge q^n_i$. We can now gather these results to conclude that if $q^n_{i+1}\ge q^n_i$,
\details{ 
 \begin{align*}
  \frac{q^{n+1}_{i+1}-q^{n+1}_i}{\dx} \le&
  \dt \left[K + \bM  \max\left(-\dHe\left(-\Lx(T),\Lx(T)\right), \dHe\left(\Lx(T),-\Lx(T)\right)\right)\right] \\
  &+ w\left(\tn\right)\left[1+ 4\dt    \CHL\left(\Lx\left(T\right)\right)  \right],
 \end{align*}
}
 \begin{equation}
 \label{eq:ProofExistence_Estimate_dq_p}
  \frac{q^{n+1}_{i+1}-q^{n+1}_i}{\dx} \le
 \dt \gamma + w\left(\tn\right) \left(1+\beta\dt\right),
 \end{equation}
with $\gamma$, $\beta$ defined in \eqref{eq:ProofExistence_alpha}-\eqref{eq:ProofExistence_beta}.
In the other case, namely when $q^n_{i+1}<q^n_i$, we reformulate \eqref{eq:ProofExistence_Decomp1} as
\begin{align*}
 q^{n+1}_{i+1} - q^{n+1}_i \le q^n_{i+1}-q^n_i + \mathfrak{A} + \mathfrak{B} + \mathfrak{F} + \mathfrak{G},
\end{align*}
where $\mathfrak{A}$ and $\mathfrak{B}$ have been defined in \eqref{eq:ProofExistence_frakA}-\eqref{eq:ProofExistence_frakB}, and
\begin{align*}
\mathfrak{F}&= -\frac{\dt}{\dx} b\left(x_i,I^{n+1}\right) \left[ \dHe\left( q^n_{i-1},q^n_i\right) - \dHe\left( q^n_i,q^n_{i+1}\right)\right] \\
 \mathfrak{G}& = -\frac{\dt}{\dx}b\left(x_{i+2},I^{n+1}\right) \left[ \dHe\left(q^n_{i+1}, q^n_{i+2}\right) - \dHe\left( q^n_i, q^n_{i+1} \right) \right].
\end{align*}
Use again \eqref{eq:ProofExistence_DL1}-\eqref{eq:ProofExistence_DL2}, to get for all $(\dd p, \dd q)\in\partial\dHe\left(q^n_i,q^n_{i+1}\right)$
\details{ 
\begin{align*}
 \dHe\left(q^n_{i-1},q^n_i\right)&\ge  \dHe\left( q^n_i,q^n_{i+1}\right) - \dx w\left(\tn\right) \dd p -\left( q^n_{i+1}-q^n_i\right) \dd q
  \\
 \dHe\left( q^n_{i+1},q^n_{i+2}\right) & \ge \dHe\left( q^n_i,q^n_{i+1}\right)  + \left(q^n_{i+1}-q^n_i\right) \dd p+ \dx w\left(\tn\right) \dd q,
\end{align*}
\begin{align*}
 \mathfrak{F}&\le -\frac{\dt}{\dx}b\left(x_i,I^{n+1}\right) \left[ \dHe\left( q^n_i,q^n_{i+1}\right) - \dx w\left(\tn\right) \dd p -\left( q^n_{i+1}-q^n_i\right) \dd q- \dHe\left( q^n_i, q^n_{i+1} \right) \right]
 \\
 \mathfrak{G} &\le -\frac{\dt}{\dx}b\left(x_{i+2},I^{n+1}\right) \left[
 \dHe\left( q^n_i,q^n_{i+1}\right)  + \left(q^n_{i+1}-q^n_i\right) \dd p+ \dx w\left(\tn\right) \dd q - \dHe\left( q^n_i, q^n_{i+1} \right)
 \right] 
\end{align*}
}
\begin{align*}
 \mathfrak{F} &\le \frac{\dt}{\dx}b\left(x_i,I^{n+1}\right) \left[ \dx w\left(\tn\right) \dd p + (q^n_{i+1}-q^n_i)\dd q \right] \\
 \mathfrak{G} &\le \frac{\dt}{\dx}b\left(x_{i+2},I^{n+1}\right) \left[ \dx w\left(\tn\right) \left(-\dd q\right) - \left(q^n_{i+1}-q^n_i\right)\dd p \right],
\end{align*}
and use these estimates,  the ones of $\mathfrak{A}$, $\mathfrak{B}$ in \eqref{eq:ProofExistence_frakA_majo}-\eqref{eq:ProofExistence_frakB_majo}, the definition of $\gamma$ in \eqref{eq:ProofExistence_alpha} and the one of $\CHL$ in \eqref{eq:Heta_defCH} to obtain
\details{
\begin{align*}
 q^{n+1}_{i+1}-q^{n+1}_i \le& q^n_{i+1}-q^n_i - \frac{\dt}{\dx}\left[ b\left(x_i,I^{n+1}\right) \left(-\dd q\right) + b\left(x_{i+2},I^{n+1}\right) \dd p  \right] \left(q^n_{i+1}-q^n_i\right)
 \\&+ \dt \gamma + 2\dt \bM w\left(\tn\right) \left(\dd p -\dd q\right)
\end{align*}
\begin{align*}
 q^{n+1}_{i+1}-q^{n+1}_i \le&  \left[1 - \frac{\dt}{\dx}\left( b\left(x_{i+2},I^{n+1}\right) \dd p - b\left(x_i,I^{n+1}\right) \dd q  \right)\right] \left(q^n_{i+1}-q^n_i\right)
 \\&+ \dt \gamma + 2\dt \bM w\left(\tn\right) \left(\dd p -\dd q\right),
\end{align*}
where we remarked that $\dd p - \dd q\ge 0$, since $\dHe$ is increasing in its first variable and decreasing in the second. 
}
\begin{align*}
 q^{n+1}_{i+1}-q^{n+1}_i \le&  \left[1 - \frac{\dt}{\dx}\left( b\left(x_{i+2},I^{n+1}\right) \dd p - b\left(x_i,I^{n+1}\right) \dd q  \right)\right] \left(q^n_{i+1}-q^n_i\right)
 \\&+ \dt \gamma + 2\dt  w\left(\tn\right) \CHL\left(\Lx\left(T\right)\right).
\end{align*}
Eventually, the stability condition \eqref{eq:CFLsatisfier} yields that 
\[
 1 - \frac{\dt}{\dx}\left( b\left(x_{i+2},I^{n+1}\right) \dd p - b\left(x_i,I^{n+1}\right) \dd q  \right)\ge 0,
\]
and since $q^n_{i+1}-q^n_i<0$, the above estimate can be simplified
\details{
\[
 \frac{q^{n+1}_{i+1}-q^{n+1}_i}{\dx} \le \dt \gamma + 2\frac{\dt}{\dx}  w\left(\tn\right)  \CHL(\Lx(T)),
\]
and the stability condition \eqref{eq:CFLsatisfier} once again gives}
\[
 \frac{q^{n+1}_{i+1}-q^{n+1}_i}{\dx} \le \dt \gamma +   w\left(\tn\right),
\]
where the stability condition \eqref{eq:CFLsatisfier} also gave an upper bound of the last term.
Hence, estimate \eqref{eq:ProofExistence_Estimate_dq_p} holds in fact in both cases, and  
\[
 w\left(\tn\right)\le \e^{\beta\tn} \left(\gamma\tn + w\left(0\right)\right),
\]
so that property \ref{it:sc} is satisfied.

\subsubsection{Conclusion}
\label{sec:ProofExistence_conclusion}

To conclude the induction, remark that properties  \ref{it:Lx} and \ref{it:Lt} are straightforward using Remark \ref{lem:monotonus}, item \ref{it:I_bound} and \eqref{hyp:R_bounded}. Let us now discuss the fact that $I^{n+1}$ could be not uniquely determined as the solution of $\phi(I)=0$, where $\phi$ is defined in \eqref{eq:ProofExistence_phi}. It is worth noticing that it is not a crucial issue in the proof above, since any suitable $I^{n+1}$ can be chosen in Section \ref{sec:ProofExistence_existence}. Moreover, even if $I^{n+1}$ is not uniquely determined, Prop. \ref{thm:limitconv} holds.

Actually, the fact that $I^{n+1}$ could be not uniquely determined is a purely numerical phenomenon, and it is only associated to the convex setting defined in  Def. \ref{def:WellChosenScheme}. Indeed, as $\dHe\ge 0$, in the flat setting (see Remark \ref{rem:worstcaseisok}), \eqref{hyp:b_decreasing} and \eqref{hyp:R_decreasing} yield that for all $i\in\Z$ and for all $n\in\ccl 0,N_t-1\ccr$,
\begin{equation}
\label{eq:Imapstobu}
 I\mapsto u^{n+1,I}_i,
\end{equation}
with $\bu^{n+1,I}$ defined in \eqref{eq:explicitJ},
is increasing. As a consequence, if $I<J$ are such that $\min\bu^{n+1,I}=\min\bu^{n+1,J}=0$, then for all $i\in\Z$, 
\[
 u^{n+1,I}_i < u^{n+1,J}_i, \;\; u^{n+1,I}_i\ge 0, \;\; u^{n+1,J}_i\ge 0, 
 \]
and considering an index $j$ such that $u^{n+1,J}_j = \min\bu^{n+1,J} =0$ in the above expressions gives a contradiction. This remark also gives a sufficient condition for $I^{n+1}$ to be uniquely determined in the convex setting. 
Indeed, coming back to the definition of $\bu^{n+1,I}$ in \eqref{eq:explicitJ}, one can remark that the finite-differences in $\dHe$ are bounded thanks to the Lipschitz bounds of $\bu^n$ stated in Prop. \ref{thm:existence}-\ref{it:Lx}. Hence,  \eqref{eq:Imapstobu} is increasing if \eqref{eq:ConditionForUniqueness} is satisfied.

However,  this condition is too restrictive in practice, since the bound of $\dHe$ seems to be far too large around the minimal points of $\bu^{n+1,I}$. In fact, we observed in the numerical tests that $\bu^{n+1,I}$ is flat around its minimal points,
although we were not able to quantify it precisely.
As a consequence, 
\[
 \dHe\left(\frac{u^n_i-u^n_{i-1}}{\dx}, \frac{u^n_{i+1}-u^n_i}{\dx}\right),
\]
is small around the minimal points of $\bu^n$, as are the finite differences in $\dHe$. The condition \eqref{eq:ConditionForUniqueness} can then be relaxed in practice, and we could not exhibit a test case in which $I^{n+1}$ was not uniquely determined.

\subsection{Proof of Prop. \ref{thm:limitconv}}
\label{sec:ProofConvergence}

In this section, we consider $\udt$ and $\Idt$ defined by \eqref{eq:defJdt} and \eqref{eq:scheme}. As they are defined with a reformulation of scheme \eqref{eq:scheme}, such that $\Idt$ is a constant by parts reconstruction of $\bI$, and that $\udt$ coincides with $\bu$ on the grid points, it is natural to expect $(\udt,\Idt)$ to satisfy similar properties as in Prop. \ref{thm:existence}. Indeed, the following result holds

\begin{lem}\label{prop:Noel}
Let $\dHe$, $\dRe$, $\dt$, $\dx$, $\eta$ be an adapted discretization (Definition\ref{def:adapted}), and let $\udt$ and $\Jdt$ be  defined by \eqref{eq:defudt} and \eqref{eq:defJdt}. We have:
\begin{enumerate}
\item \label{it:udt_stability_lipspace}\textbf{Lipschitz-in-space property:} for all $t\in[0,T]$, $\udt(t,\cdot)$ is $\Lx(t)$-Lipschitz, with $\Lx$ defined in Prop. \ref{thm:existence}.
\item \label{it:udt_stability_liptime} \textbf{Lipschitz-in-time property:} for all $x\in\R$, $\udt(\cdot,x)$ is $\Lt$-Lipschitz with $\Lt$ given by
$\Lt=\bM \dHe\left(\LxT,-\LxT\right)+K$,
$\bM$ defined in \eqref{hyp:b_bounds},
and $K$ in \eqref{hyp:R_K}.
\item \label{it:udt_stability_boundsudt}\textbf{Bounds for $\udt$:} for all $(t,x)\in[0,T]\times\R$,
\[
\ua|x|+\ub_t\leq \udt(t,x)\leq \oa|x|+\ob_t,
\]
with $\ua$, $\oa$ defined in \eqref{hyp:u0_coercive}, and $\ub_t$, $\ob_t$ in Prop. \ref{thm:existence}.
\item \label{it:udtstability_boundsIdt}\textbf{Bounds for $\Jdt$:} for all $t\in[0,T]$, $I_m-1\leq\Jdt(t)\leq I_M$, with $I_m$ and $I_M$ defined in \eqref{hyp:R_ImIM}.
\end{enumerate}
\end{lem}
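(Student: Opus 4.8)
The plan is to transcribe to the time-continuous reconstruction \eqref{eq:defudt} the induction on the time index already used for Prop.~\ref{thm:existence}, relying on $\udt(\tn,x_i)=u_i^n$ and on the order-preserving character of $\MsJ$ under the CFL condition (Lemma~\ref{lem:contmonotonus}). Item~\ref{it:udtstability_boundsIdt} is immediate since $\Jdt\equiv I^{n+1}$ on $(\tn,\tnp]$, $\Jdt(0)=I^1$, and $I^{n+1}\in[I_m-1,I_M]$ by Prop.~\ref{thm:existence}-\ref{it:I_bound}.

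For item~\ref{it:udt_stability_lipspace} I would show by induction on $n$ that $\udt(\tn,\cdot)$ is $\Lx(\tn)$-Lipschitz, the case $n=0$ being \eqref{hyp:u0_Lipschitz} as $\Lx(0)=\Lin$. Since $\Lx(\tn)\le\Lin+KT$ and $\CHL$ is non-decreasing, \eqref{eq:CFLsatisfier} gives $\dt\,\CHL(\Lx(\tn))\le\dx$, and Lemma~\ref{lem:contmonotonus}(2) — valid because $I^{n+1}\in[I_m-1,I_M]$ — yields that $\udt(\tn+s,\cdot)$ is $(\Lx(\tn)+sK)=\Lx(\tn+s)$-Lipschitz for all $s\in(0,\dt]$; taking $s=\dt$ closes the induction, so the bound holds on $[0,T]$.

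For item~\ref{it:udt_stability_boundsudt} I would again induct on $n$ — the base case $n=0$ being \eqref{hyp:u0_coercive} since $\ub_0=\ub$ and $\ob_0=\ob$ — propagating the affine bounds with the monotonicity of $\MsJ$. For the upper bound, compare $\udt(\tn,\cdot)$ with $x\mapsto\oa|x|+\ob_\tn$; both are $(\Lin+KT)$-Lipschitz (take $\oa\le\Lin$ without loss of generality), so $\MsJnp$ preserves their ordering. The difference quotients of $\oa|\cdot|+\ob_\tn$ lie in $[-\oa,\oa]$, hence \eqref{hyp:Heta_monotonous} gives $\dHe(\cdot,\cdot)\ge\dHe(-\oa,\oa)$, and since $\dHe(-\oa,\oa)\le\dHe(0,0)=0$ and $b\le\bM$ one gets $\MsJnp(\oa|\cdot|+\ob_\tn)(x)\le\oa|x|+\ob_\tn-s\bM\dHe(-\oa,\oa)$; adding $-s\dRe(\tn+s,x,I^{n+1})$, which is bounded by $sK$ through \eqref{hyp:R_bounded} (as $I^{n+1}\in[\min(I_m/2,I_m-1),2I_M]$), reproduces exactly $\oa|x|+\ob_{\tn+s}$. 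The lower bound is symmetric, using $x\mapsto\ua|x|+\ub_\tn$ and $\dHe(\cdot,\cdot)\le\dHe(\ua,-\ua)$ with $\dHe(\ua,-\ua)\ge0$.

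Finally, for item~\ref{it:udt_stability_liptime} I would fix $x$ and read off from \eqref{eq:defudt} that the time increment of $\udt(\cdot,x)$ over a sub-interval of $(\tn,\tnp]$ is a combination of $-b(x,I^{n+1})\dHe(\cdot,\cdot)$, of $-\dRe(\tn+s,x,I^{n+1})$, and of a term controlled by the $t$-Lipschitz bound \eqref{hyp:R_t-Lipschitz} of $\dRe$. By item~\ref{it:udt_stability_lipspace} the finite-difference arguments of $\dHe$ are bounded by $\LxT$, so monotonicity gives $\dHe(-\LxT,\LxT)\le\dHe(\cdot,\cdot)\le\dHe(\LxT,-\LxT)$, and $\dHe(\LxT,-\LxT)\ge0$ together with $\dHe(\LxT,-\LxT)+\dHe(-\LxT,\LxT)\ge2\dHe(0,0)=0$ (convexity resp. flatness) yields $|\dHe(\cdot,\cdot)|\le\dHe(\LxT,-\LxT)$; with $|\dRe|\le K$ from \eqref{hyp:R_bounded}, splitting an arbitrary time increment into full and partial steps and using the triangle inequality gives the $\Lt$-Lipschitz bound with $\Lt=\bM\dHe(\LxT,-\LxT)+K$, the $t$-dependence of $\dRe$ contributing only a term vanishing with $\dt$ (absent when $\dRe$ does not depend on $t$, the piecewise-linear case). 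There is no deep obstacle here: everything is forced by the monotonicity of $\MsJ$ and the CFL bounds of Section~\ref{sec:MainResults}; the one delicate point is the bookkeeping ensuring that the constants $\Lx(\tn)$, $\oa$, $\LxT$ all remain in $[-\Linf,\Linf]$, where \eqref{hyp:Heta_monotonous} applies and $\CHL$ is defined — which holds because \eqref{eq:CFLsatisfier} is only meaningful for $\oa+KT,\Lin+KT\le\Linf$ — together with the control of the $t$-dependence of $\dRe$ in item~\ref{it:udt_stability_liptime}.
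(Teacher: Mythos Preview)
Your proof is correct and follows the same route as the paper: item \ref{it:udtstability_boundsIdt} is read off from Prop.~\ref{thm:existence}-\ref{it:I_bound}, items \ref{it:udt_stability_lipspace} and \ref{it:udt_stability_boundsudt} are propagated by induction via the monotonicity of $\MsJ$ (Lemma~\ref{lem:contmonotonus}), and item \ref{it:udt_stability_liptime} follows from the space-Lipschitz bound by controlling the integrand. Your treatment of \ref{it:udt_stability_liptime} is in fact more careful than the paper's, which only bounds $|b\,\dHe+\dRe(\tnp,\cdot,I^{n+1})|\le\Lt$ (the slope over a \emph{full} step) and silently passes over the extra $s\,\partial_t\dRe$ contribution you flag; your observation that this term is $O(\dt)$ --- hence harmless for the equicontinuity needed downstream --- is the right way to close the gap. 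One cosmetic point: the ``WLOG $\oa\le\Lin$'' in item~\ref{it:udt_stability_boundsudt} is unnecessary, since \eqref{eq:CFLsatisfier} already provides the CFL at level $\max(\oa,\Lin)+KT$, so Lemma~\ref{lem:contmonotonus} applies directly to both $\udt(\tn,\cdot)$ and $\oa|\cdot|+\ob_\tn$.
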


\begin{proof}
As $\Idt$ is constant by parts and takes only the values of $\bI$, \ref{it:udtstability_boundsIdt} is a reformulation of Prop. \ref{thm:existence}-\ref{it:I_bound}. Items \ref{it:udt_stability_lipspace} and \ref{it:udt_stability_boundsudt} can then be proved by induction using Lemma \ref{lem:monotonus}, and similarly to what is done in the proof of Prop. \ref{thm:existence}.
Item \ref{it:udt_stability_liptime} is a consequence of \ref{it:udt_stability_lipspace}. More precisely,
we only need to remark that
for all $n\in\ccl0,N_T-1\ccr$, $x\in\R$,
\[
\left|b\left(x,I^{n+1}\right)\dHe\left(\frac{\udt(\tn,x)-\udt(\tn,x-\dx)}{\dx},\frac{\udt(\tn,x+\dx)-\udt(\tn,x)}{\dx}\right)+\dRe\left(\tnp,x,I^{n+1}\right)\right|\le \Lt.
\]
This inequality is true thanks to \eqref{hyp:b_bounds}, \eqref{hyp:Heta_monotonous}, \eqref{hyp:R_bounded}, and items \ref{it:udt_stability_lipspace} and \ref{it:udtstability_boundsIdt}.
\end{proof}

The proof of Prop. \ref{thm:limitconv} then follows the lines of the proof of \cite[Prop. $2.3$]{CalvezHivertYoldas2022}, with technical adaptations to the case considered here.  It is indeed a simple case of the general framework that is dealt with in this paper.

In Section \ref{sec:limits}, we show that $\udt$, and $\Jdt$ have limits when $\dt$, $\dx$, and $\eta$ go to $0$, and some properties of these limits are stated.  Section \ref{ssec:smoothJ} is devoted to the regularization of $I$ and $\Idt$,   and associated numerical and viscosity solutions are introduced. Finally, Section \ref{sec:ident} is devoted to the identification of the limits derived in Section \ref{sec:limits}, as the unique viscosity solution of \eqref{eq:HJ}.

\subsubsection{Limits of $\udt$, and $\Jdt$}\label{sec:limits}

To start with, let us remark that Lemma \ref{prop:Noel} provides enough compactness to extract a limit. More precisely,
\begin{lem}\label{lem:extraction}
Under the assumptions of Prop. \ref{thm:limitconv}, there exists $\ul\in\Cc([0,T],\R)$, $\Jl$,$\Jlp\in BV(0,T)$, and a subsequence $(\dt^n)_n$ of the discretization such that $\dt^n\to_{n\to +\infty} 0$, and
\begin{enumerate}
\item \label{it:extraction_udt} $\udtn$ converges locally uniformly towards $\ul$: for all compact $\Omega$ of $\R$,
\[
\|\udtn-\ul\|_{L^\infty([0,T]\times \Omega)}\underset{n\to +\infty}\longrightarrow 0.
\]
\item \label{it:extraction_minu} $\ul$ is $\LxT$-Lipschitz in trait, $\Lt$-Lipschitz in time, with $\Lx$ defined in Prop. \ref{thm:existence}-\ref{it:Lx}, and $\Lt$ in Lemma \ref{prop:Noel}-\ref{it:udt_stability_liptime}. Moreover, $\ul$ satisfies:
\[
\forall t\in[0,T],\; \min_x \ul(t,x)=0.
\]
\item \label{it:extraction_Jdt}$(\Jdtn)_n$ converges almost everywhere on $(0,T)$ towards $\Jl$ and $\Jlp$.
\item \label{it:extraction_BoundsJ} $\Jl$ is lower semi-continuous, $\Jlp$ is upper semi-continuous, and both satisfy:
\[
I_m\le \Jl\le I_M,\;\;\;\;\text{and}\;\;\;\; I_m\le \Jlp\le I_M.
\]
\end{enumerate}
\end{lem}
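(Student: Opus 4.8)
\emph{Strategy.} I would handle $\udtn$ and $\Jdtn$ by two independent compactness arguments and then read off the stated properties of the limits. For $\udtn$, Lemma~\ref{prop:Noel} gives that each $\udtn$ is $\LxT$-Lipschitz in trait and $\Lt$-Lipschitz in time on $[0,T]\times\R$, with constants independent of $n$, and Lemma~\ref{prop:Noel}-\ref{it:udt_stability_boundsudt} together with $\dt^n\le T$ makes $\udtn$ uniformly bounded on $[0,T]\times K$ for every compact $K\subset\R$. Arzelà--Ascoli yields a subsequence converging uniformly on each such slab, and a diagonal extraction over $K=\overline{B(0,R)}$, $R\in\N$, produces a single subsequence converging locally uniformly to some $\ul$. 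Since Lipschitz bounds pass to pointwise limits, $\ul\in\Cc([0,T],\R)$ is $\LxT$-Lipschitz in trait and $\Lt$-Lipschitz in time, which is item~\ref{it:extraction_udt} and the regularity part of item~\ref{it:extraction_minu}.

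\emph{The constraint $\min_x\ul=0$.} On the grid one has $u^n_i\ge 0$ for all $i$ and $\min_i u^n_i=0$ (by \eqref{eq:scheme}, or by \eqref{hyp:u0_minimum} when $n=0$), while $\udtn(\tn,x_i)=u^n_i$; using that $\udtn(\tn,\cdot)$ is $\LxT$-Lipschitz one gets $-\LxT\dx^n\le\min_x\udtn(\tn,\cdot)\le 0$, and the $\Lt$-Lipschitz-in-time bound propagates this to every $t\in[\tn,\tnp]$ up to an extra $\Lt\dt^n$. Since the mesh ratio is held fixed by the CFL \eqref{eq:CFLsatisfier}, $\dt^n\to 0$ entails $\dx^n\to 0$, so $\min_x\udtn(t,\cdot)\to 0$ uniformly in $t$; the coercivity bound of Lemma~\ref{prop:Noel}-\ref{it:udt_stability_boundsudt} confines all minimizers to a fixed compact set, so local uniform convergence transfers the minimum to the limit and $\min_x\ul(t,\cdot)=0$ for every $t$, finishing~\ref{it:extraction_minu}.

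\emph{Limit of $\Jdtn$.} The decisive point is that Prop.~\ref{thm:existence}-\ref{it:I_bound} and \ref{it:I_BVbound} upgrade to a \emph{uniform} bound in $BV(0,T)$. Indeed, $\Jdtn$ is the step function with jumps $I^{k+1}-I^k$ for $k=1,\dots,N_T-1$; splitting each jump into positive and negative parts, the negative parts are each $\le\kappa\dt^n$ by~\ref{it:I_BVbound}, hence sum to at most $\kappa T$, and since $\sum_k(I^{k+1}-I^k)=I^{N_T}-I^1$ with $|I^{N_T}-I^1|\le I_M-I_m+1$ by~\ref{it:I_bound}, the positive parts sum to at most $\kappa T+(I_M-I_m+1)$ as well; therefore $\mathrm{TV}(\Jdtn)\le 2\kappa T+(I_M-I_m+1)$, uniformly in $n$ (in the convex setting $\kappa$ stays bounded as $\dx^n\to 0$, using the bound \eqref{eq:w_bound}). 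By Helly's selection theorem, a further subsequence makes $\Jdtn$ converge at \emph{every} point of $(0,T)$ to some $g\in BV(0,T)$, with $I_m\le g\le I_M$ thanks to $\Jdtn\le I_M$ and the refined lower bound \eqref{eq:Minoration_Inp1_dx} (which tends to $I_m$ as $\dx^n\to 0$). Letting $\Jl$ and $\Jlp$ be respectively the lower and upper semicontinuous envelopes of $g$, i.e. $\Jl(t)=\min(g(t^-),g(t^+))$ and $\Jlp(t)=\max(g(t^-),g(t^+))$, they obey the bounds in~\ref{it:extraction_BoundsJ}, and since a $BV$ function is continuous off a countable set one has $\Jl=\Jlp=g$ almost everywhere, so $\Jdtn$ converges a.e.\ to both, which is~\ref{it:extraction_Jdt}.

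\emph{Main obstacle.} The two extractions, the transfer of Lipschitz bounds, and the squeeze for the constraint are routine; the genuinely load-bearing step is recognizing that the one-sided jump control of Prop.~\ref{thm:existence}-\ref{it:I_BVbound}, combined with the $L^\infty$ bound~\ref{it:I_bound}, produces a uniform total-variation bound for $\Jdtn$. This is precisely what makes Helly applicable, and thus yields genuine a.e.\ convergence rather than merely the half-relaxed (lower/upper relaxed) limits; it is the main technical content of the lemma. A minor care point is checking that the minimizers of $\udtn(t,\cdot)$ and of $\ul(t,\cdot)$ all lie in one fixed compact uniformly in $t$ and $n$, so that the pointwise minimum passes to the local uniform limit.
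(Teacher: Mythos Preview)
Your proof is correct and follows essentially the same route as the paper: Arzelà--Ascoli for $\udtn$, Helly's selection theorem for $\Jdtn$ via the uniform total-variation bound obtained from Prop.~\ref{thm:existence}-\ref{it:I_bound}-\ref{it:I_BVbound}, transfer of the constraint through coercivity and local uniform convergence, and the refined lower bound \eqref{eq:Minoration_Inp1_dx} for item~\ref{it:extraction_BoundsJ}. The paper's proof is very terse (it mostly refers back to \cite{CalvezHivertYoldas2022}), so your write-up simply supplies the details that the paper leaves implicit; in particular your explicit TV computation matches the paper's, and your construction of $\Jl,\Jlp$ as the lower and upper semicontinuous envelopes of the Helly limit is a clean way to obtain the stated semicontinuity.
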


\begin{proof}
This lemma is equivalent to \cite[lemma 4.3]{CalvezHivertYoldas2022}. The proof is similar, using Ascoli theorem for the convergence of $\udt$ in \ref{it:extraction_udt}, and Helly's selection theorem for $\Jdt$ in \ref{it:extraction_Jdt}. Indeed, one can notice that Lemma \ref{thm:existence}-\ref{it:I_bound}-\ref{it:I_BVbound} provides an uniform bound of $\Jdt$ in total variation. Actually, it is piecewise constant, equal to $I^{n+1}$ on $(\tn,\tnp)$ for all $n\in\ccl0,N_T-1\ccr$, uniformly bounded on $(0,T)$, and with uniformly bounded total height of jumps down.
\details{
Then, one has
\[
 TV(\Jdt)=\sum\limits_{\atop{n=0}{I^{n+1}\ge I^n}}^{N_T-1} (I^{n+1}-I^n) -\sum\limits_{\atop{n=0}{I^{n+1}<I^n}}^{N_T-1} (I^{n+1}-I^n),
\]
with
\begin{align*}
0\le-\sum\limits_{\atop{n=0}{I^{n+1}<I^n}}^{N_T-1} (I^{n+1}-I^n) \le \kappa T,
\end{align*}
where $\kappa$ is defined in Prop. \ref{thm:existence}-\ref{it:I_BVbound}. On the other hand,
\[
 I^{N_T}-I^0=\sum\limits_{n=0}^{N_T-1} (I^{n+1}-I^n)= \sum\limits_{\atop{n=0}{I^{n+1}\ge I^n}}^{N_T-1} (I^{n+1}-I^n) +\sum\limits_{\atop{n=0}{I^{n+1}<I^n}}^{N_T-1} (I^{n+1}-I^n),
\]
so that Prop. \ref{thm:existence}-\ref{it:I_bound} yields
\[
 0\le \sum\limits_{\atop{n=0}{I^{n+1}\ge I^n}}^{N_T-1} (I^{n+1}-I^n)\le I_M-I_m+1+\kappa T,
\]
and hence
\[
TV(\Jdt)\le I_M-I_m+1+2\kappa T.
\]
}
Item \ref{it:extraction_minu} is then a consequence of the local uniform convergence of the uniformly coercive and Lipschitz functions $(\udtn)_n$. Eventually, \ref{it:extraction_BoundsJ} comes from \eqref{eq:Minoration_Inp1_dx}.
\end{proof}

\begin{rmq}
 \label{rmq:Subsequence} In what follows, the mention $\dt\to 0$ will always refer to a subsequence for which the convergences of Lemma \ref{lem:extraction} hold true.
\end{rmq}

Note that neither $\Jl$, nor $\Jdt$, are  continuous on $(0,T)$, this behavior is showcased in Section \ref{sec:TestsNums_MonotonyI}. To be able to use the standard framework of Hamilton-Jacobi equations with Lipschitz Hamiltonian, we introduce  in the next subsection a Lipschitz regularization of $\Jl$, and $\Jdt$, and we regularize $\ul$ and $\udt$ accordingly.

\subsubsection{Regularization of $\Idt$ and $I_0$}\label{ssec:smoothJ}

Following \cite{CalvezHivertYoldas2022,CalvezLam2020,AmbrosioFuscoPallara2000}
let us introduce, for all $\dt>0$, $k>0$,  and $t\in[0,T]$:
\begin{equation}\label{eq:Jdtk}
\Jdtkm=\inf_{s\in[0,T]} \Jdt(s)+k|t-s|,\;\;\;\;\text{and}\;\;\;\; \Jdtkp=\sup_{s\in[0,T]} \Jdt(s)-k|t-s|.
\end{equation}
Similarly, for $\Jl$, we let for all $k>0$, $t\in[0,T]$:
\begin{equation}\label{eq:Jlk}
\Jlkm=\inf_{s\in[0,T]} \Jl\left(s\right)+k|t-s|,\;\;\;\;\text{and}\;\;\;\; \Jlkp=\sup_{s\in[0,T]} \Jlp(s)-k|t-s|.
\end{equation}
These choices of regularization allow for stronger convergence properties than established in previous section. The needed results are provided by \cite[Lemma 4.4]{CalvezHivertYoldas2022}, and they are recalled in the following lemma:

\begin{lem}\label{lem:imporvedconvergence} Let $\Jdtkm$, $\Jlkm$, $\Jdtkp$,$\Jlkp$, and $\Jdt$ be defined by \eqref{eq:Jdtk}, \eqref{eq:Jlk} and \eqref{eq:defJdt}. Suppose that the hypothesis of Lemma \ref{lem:extraction} are satisfied, so that $\Jl$, $\Jlp$ are well-defined. Then, the following results hold:
\begin{enumerate}
\item For all $\dt>0$, for all $k>0$,
\[
 \begin{array}{l c l}
  \ds I_m-1\leq \Jdtkm\leq I_M , & \;\; & \ds I_m\leq \Jlkm\leq I_M, \vspace{4pt} \\
   \ds I_m-1\leq \Jdtkp\leq I_M , & \;\; & \ds I_m\leq \Jlkp\leq I_M,
 \end{array}
\]
with $I_m$ and $I_M$ defined in \eqref{hyp:R_ImIM}.
\item For all $\dt>0$, for all $t\in[0,T]$, the following convergences hold when $k\to\infty$,
\[
\Jdtkm\nearrow\Jdt, \qquad\Jlkm\nearrow \Jl, \qquad \Jdtkp\searrow\Jdt, \qquad\Jlkp\searrow \Jlp.
\]
\item For all $\dt>0$, for all $k>0$, $\Jdtkm$, $\Jlkm$, $\Jdtkp$ and $\Jlkp$ are $k$-Lipschitz functions on $[0,T]$.
\item For all $k>0$,  $\|\Jdtkm-\Jlkm\|_\infty\to_{\dt\to 0}0$, and $\|\Jdtkp-\Jlkp\|_\infty\to_{\dt\to0}0$.
\end{enumerate}
\end{lem}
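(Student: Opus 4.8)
The plan is to recognize $\Jdtkm,\Jdtkp$ and $\Jlkm,\Jlkp$ as the standard inf-- and sup--convolutions (Moreau--Yosida regularizations) of $\Jdt$, $\Jl$, $\Jlp$ with the penalty $k|\cdot|$, so that the statement is exactly \cite[Lemma~4.4]{CalvezHivertYoldas2022} up to the additive constant $-1$ in the lower bounds for $\Jdtkm$ and $\Jdtkp$, which merely tracks the a priori bound $\Jdt\ge I_m-1$ of Lemma~\ref{prop:Noel}-\ref{it:udtstability_boundsIdt} (itself coming from Prop.~\ref{thm:existence}-\ref{it:I_bound}). I would dispatch Items~1 and~3 first since they are one-liners. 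For Item~1: taking $s=t$ in the infimum gives $\Jdtkm(t)\le\Jdt(t)\le I_M$ and $\Jdtkp(t)\ge\Jdt(t)\ge I_m-1$, while dropping the nonnegative penalty gives $\Jdtkp(t)\le\sup_s\Jdt(s)\le I_M$ and $\Jdtkm(t)\ge\inf_s\Jdt(s)\ge I_m-1$; the same four lines applied to $\Jl$ and $\Jlp$, together with $I_m\le\Jl,\Jlp\le I_M$ from Lemma~\ref{lem:extraction}-\ref{it:extraction_BoundsJ}, give the bounds for $\Jlkm,\Jlkp$. For Item~3: for any bounded $g$ and any $t_1,t_2$, the triangle inequality $|t_1-s|\le|t_2-s|+|t_1-t_2|$ yields $\inf_s(g(s)+k|t_1-s|)\le\inf_s(g(s)+k|t_2-s|)+k|t_1-t_2|$ and symmetrically, and likewise for the suprema; since Item~1 makes all four functions finite, they are $k$-Lipschitz.

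For Item~2, monotonicity in $k$ is immediate: a larger $k$ enlarges the nonnegative penalty, hence enlarges each infimum and shrinks each supremum. For the limits one uses the standard inf--convolution theory: since $\Jdt$ is bounded, the near-minimizers $s$ in $\inf_s(\Jdt(s)+k|t-s|)$ satisfy $|t-s|=O(1/k)$, so $\Jdtkm(t)$ increases to $\min\bigl(\Jdt(t),\liminf_{s\to t}\Jdt(s)\bigr)$, which is $\Jdt(t)$ at every continuity point and every upward jump of the piecewise-constant $\Jdt$ (and the downward jumps of $\Jdt$ are of size $O(\dt)$ by Prop.~\ref{thm:existence}-\ref{it:I_BVbound}); the analogous computation for $\Jl$, now using its lower semicontinuity from Lemma~\ref{lem:extraction}-\ref{it:extraction_BoundsJ}, gives $\Jlkm\nearrow\Jl$, and the mirror argument gives $\Jdtkp\searrow\Jdt$ and $\Jlkp\searrow\Jlp$ via the upper semicontinuity of $\Jlp$.

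Item~4 is the real content and the step I expect to be the main obstacle, since it requires relating an infimum/supremum of $\Jdt$ to a sequence that converges to $\Jl$ only almost everywhere. Fix $k$: by Item~3 the families $\{\Jdtkm\}_\dt$ and $\{\Jdtkp\}_\dt$ are uniformly $k$-Lipschitz and uniformly bounded on $[0,T]$, hence relatively compact in $\Cc([0,T],\R)$ by Arzel\`a--Ascoli, so it suffices to identify every uniform limit of a subsequence with $\Jlkm$ (resp. $\Jlkp$). This is done by a $\liminf/\limsup$ chase at a fixed $t$: for the $\liminf$ bound one extracts near-minimizers $s_\dt\in[0,T]$ of $\inf_s(\Jdt(s)+k|t-s|)$, passes to a convergent subsequence $s_\dt\to s^*$, and uses the a.e.\ convergence $\Jdt\to\Jl$ of Lemma~\ref{lem:extraction}-\ref{it:extraction_Jdt} together with the lower semicontinuity of $\Jl$; for the $\limsup$ bound one picks an $\ep$-near-minimizer $s_\ep$ for $\Jlkm(t)$, which may be taken to be a continuity point of $\Jl$ because $\Jl\in BV(0,T)$ has at most countably many discontinuities, and then a point $s$ arbitrarily close to $s_\ep$ at which $\Jdt(s)\to\Jl(s)\approx\Jl(s_\ep)$. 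The bookkeeping here is the delicate part; the detailed estimates are exactly those of \cite[Lemma~4.4]{CalvezHivertYoldas2022} and transfer verbatim once the constants are matched as above.
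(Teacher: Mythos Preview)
Your proposal is correct and follows exactly the route the paper itself takes: it defers the computations to \cite[Lemma~4.4]{CalvezHivertYoldas2022}, and you have correctly reconstructed that argument (Items~1 and~3 as one-liners from the definition, Item~2 from standard Moreau--Yosida theory, Item~4 via Arzel\`a--Ascoli plus a $\liminf/\limsup$ sandwich using the a.e.\ convergence of $\Jdt$ and the semicontinuity of $\Jl$, $\Jlp$). Your observation that $\Jdtkm\nearrow\Jdt$ can fail at the finitely many downward-jump points of $\Jdt$, with discrepancy $O(\dt)$ by Prop.~\ref{thm:existence}-\ref{it:I_BVbound}, is in fact more careful than the paper's own statement; this does not affect any downstream use of the lemma.
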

The proof of this lemma is only computational, and partly done in \cite{CalvezHivertYoldas2022}. The last point might seem surprising, but we emphasize on the fact that the convergences are not uniform in $k$.

Using these regularized versions of $I$  as parameters, one can then define associated viscosity solutions of \eqref{eq:HJ} and solutions of scheme \eqref{eq:scheme}. More precisely, for $t>0$ and $x\in\R$, let us introduce $\vk$, and $\wk$ the viscosity solutions of
\begin{align}\label{eq:vk}
\partial_t \vk\left(t,x\right)+b\left(x,\Jlkm\left(t\right)\right)\Hcont\left(\nabla_x\vk\right)\left(t,x\right)+\Rcont\left(t,x,\Jlkm\left(t\right)\right)&=0\\
\label{eq:wk}
\partial_t \wk\left(t,x\right)+b\left(x,\Jlkp\left(t\right)\right)\Hcont\left(\nabla_x\wk\right)\left(t,x\right)+\Rcont\left(t,x,\Jlkp\left(t\right)\right)&=0,
\end{align}
with initial condition $\uin$.
Similarly, let $\dt>0$, $n\in\ccl0,N_T-1\ccr$, $s\in[0,\dt]$, and define $\vdtk$, and $\wdtk$ as in \eqref{eq:scheme}, by
\begin{align}\label{eq:vdtk}
\vdtk\left(\tn+s,\cdot\right)=\mathcal{M}_s^{\Jdtkm\left(\tn+s\right)}\left(\vdtk\left(\tn,\cdot\right)\right)-s\dRe\left(\tn+s,\;\cdot\;,\Jdtkm\left(\tn+s\right)\right)
\\ \wdtk\left(\tn+s,\cdot\right)=\mathcal{M}_s^{\Jdtkp\left(\tn+s\right)}\left(\wdtk\left(\tn,\cdot\right)\right)-s\dRe\left(\tn+s,\;\cdot\;,\Jdtkp\left(\tn+s\right)\right), \label{eq:wdtk}
\end{align}
once again with initialization $\uin$.
One can notice that, in the four cases, the constraint $\min u=0$ is no longer enforced.
Properties of $\vk$, $\wk$, $\vdtk$ and $\wdtk$  are stated in the following lemma,

\begin{lem} \label{lem:propofregularizedsolutions}
Suppose that the hypothesis of Prop. \ref{thm:existence} are satisfied.
Let $\vk$, $\wk$, $\vdtk$, $\wdtk$ and $\udt$ be defined by   \eqref{eq:vk}-\eqref{eq:wk}-\eqref{eq:vdtk}-\eqref{eq:wdtk} and \eqref{eq:defudt}, and $\Jl$, and $\Jlp$ defined in Lemma \ref{lem:extraction}. Then,
\begin{enumerate}
\item \label{it:approximation_Lipx} \textbf{Uniform Lipschitz continuity in trait:} for all $t\in[0,T]$,  for all $k>0$, for all $\dt>0$, $\vk\left(t,\cdot\right)$, $\wk\left(t,\cdot\right)$, $\vdtk\left(t,\cdot\right)$,  and $\wdtk\left(t,\cdot\right)$ are $\LxT$-Lipschitz, with $\Lx$ defined in Prop. \ref{thm:existence}-\ref{it:Lx}.
\item  \label{it:approximation_Lt}\textbf{Uniform Lipschitz continuity in  time:} for all $x\in\R$, for all $k>0$, for all $\dt>0$, $\vk\left(\cdot,x\right)$, $\wk\left(\cdot,x\right)$, $\vdtk\left(\cdot,x\right)$, and $\wdtk\left(\cdot,x\right)$ are $\Lt$-Lipschitz, with $\Lt$ defined in Lemmas \ref{prop:Noel}-\ref{it:Lt}.
\item \label{it:approximation_bounds}\textbf{Uniform bounds:} for all $t\in[0,T]$, for all $x\in\R$, for all $k>0$, for all $\dt>0$,
\begin{align*}
\ua|x|+\ub_t \leq \vdtk\left(t,x\right)&\le \oa|x|+\ob_t \\
\ua|x|+\ub_t \leq \wdtk\left(t,x\right)&\le \oa|x|+\ob_t \\
\ua|x|+\ub-t\left(K+\sup_{p\in[-\ua,\ua]} H(p)\right) &\le \vk\left(t,x\right)\le \oa|x|+\ob +tK \\
\ua|x|+\ub-t\left(K+\sup_{p\in[-\ua,\ua]} H(p)\right) &\le \wk\left(t,x\right)\le \oa|x|+\ob +tK,
\end{align*}
with $\ua$, $\oa$, $\ub$, $\ob$ defined in \eqref{hyp:b_bounds}, $\ub_t$ and $\ob_t$ in Prop. \ref{thm:existence}-\ref{it:Lt}, and $K$ in \eqref{hyp:R_K}.
\item \label{it:approximation_limitcont} \textbf{Monotony of the approximation:} for $k\to \infty$ we have $\vk\nearrow\vl$, and $\wk\searrow\wl$, pointwise on $[0,T]\times\R$, where $\vl$, $\wl$ are  viscosity solution  of respectively
\begin{align}
\label{eq:defvl}
\partial_t \vl+b\left(x,\Jl\right)\Hcont\left(\nabla_x\vl\right)+\Rcont\left(t,x,\Jl\right)=0 \\
\label{eq:defwl}
\partial_t \wl+b\left(x,\Jlp\right)\Hcont\left(\nabla_x\wl\right)+\Rcont\left(t,x,\Jlp\right)=0,
\end{align}
with initialization $\uin$.
\item \label{it:approximation_inequality} We have $\vdtk\leq\udt\leq\wdtk$.
\end{enumerate}
\end{lem}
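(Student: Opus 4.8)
The plan is to establish the discrete statements \ref{it:approximation_Lipx}, \ref{it:approximation_Lt}, \ref{it:approximation_bounds} for $\vdtk,\wdtk$ and the comparison \ref{it:approximation_inequality} by induction on $n$, mimicking the proof of Prop.~\ref{thm:existence}; the continuous statements for $\vk,\wk$ by the classical comparison principle for Hamilton--Jacobi equations; and the monotone convergence \ref{it:approximation_limitcont} by a stability argument. The key preliminary observation, which makes every induction run, is that by Lemma~\ref{lem:imporvedconvergence} the regularized multipliers $\Jdtkm,\Jdtkp$ (and $\Jlkm,\Jlkp$) take values in $[I_m-1,I_M]$, exactly as $\bI$ does in Prop.~\ref{thm:existence}, so that the stability conditions and bounds used there apply verbatim.

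For items \ref{it:approximation_Lipx}--\ref{it:approximation_bounds} on the discrete side: if $\vdtk(\tn,\cdot)$ is $\Lx(\tn)$-Lipschitz then, since $\Lx(\tn)\le\LxT$ and \eqref{eq:CFLsatisfier} gives $\dt\,\CHL(\LxT)\le\dx$, the operator $\mathcal{M}_s^{\Jdtkm(\tn+s)}$ of \eqref{eq:vdtk} is monotone and preserves that Lipschitz bound by Lemma~\ref{lem:contmonotonus}, and subtracting $s\,\dRe(\tn+s,\cdot,\Jdtkm(\tn+s))$, which is $sK$-Lipschitz in $x$ by \eqref{hyp:R_bounded}, gives the $\Lx(\tn+s)$-Lipschitz bound; the bounds of \ref{it:approximation_bounds} I would propagate from $\uin$ by comparing with the affine-in-space functions $\ua|x|+\ub_t$ and $\oa|x|+\ob_t$ exactly as in Prop.~\ref{thm:existence}-\ref{it:Lt}, using \eqref{hyp:b_bounds}, \eqref{hyp:Heta_monotonous} and \eqref{hyp:Heta_nonnegative}; and \ref{it:approximation_Lt} follows from \ref{it:approximation_Lipx} because $\vdtk(\tn+s,x)-\vdtk(\tn,x)$ equals $-s$ times $b(x,\cdot)\dHe+\dRe$ evaluated on slopes bounded by $\LxT$, which is bounded by $\Lt$ by \eqref{hyp:Heta_monotonous} and \eqref{hyp:R_bounded}, as in Lemma~\ref{prop:Noel}-\ref{it:udt_stability_liptime}. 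On the continuous side, $\vk(t,\cdot),\wk(t,\cdot)$ are $\LxT$-Lipschitz in trait and $\Lt$-Lipschitz in time by classical Hamilton--Jacobi estimates for the Lipschitz-coefficient equations \eqref{eq:vk}--\eqref{eq:wk} (the $tK$ growth coming from the $x$-Lipschitz bound of $\Rcont$), and the bounds of \ref{it:approximation_bounds} follow by comparison with the displayed affine-in-space functions, checked to be respectively a viscosity sub- and supersolution: at the kink $x=0$ the coercivity term $\sup_{|p|\le\ua}\Hcont(p)$ absorbs the admissible test gradients, while elsewhere one uses $\Hcont\ge0$ and $\|\Rcont(t,\cdot,I)\|_\infty\le K$ for $I\in[I_m-1,I_M]$ from \eqref{hyp:R_bounded}.

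For item \ref{it:approximation_inequality} I would argue by induction on $n$, using two monotonicities. First, $f\mapsto\MsJ(f)(x)$ is non-decreasing whenever the slopes stay below $\LxT$ and \eqref{eq:CFLsatisfier} holds (Lemma~\ref{lem:contmonotonus}), so the inductive inequalities $\vdtk(\tn,\cdot)\le\udt(\tn,\cdot)\le\wdtk(\tn,\cdot)$ pass through the operator part of \eqref{eq:vdtk}--\eqref{eq:wdtk}--\eqref{eq:defudt}. Second, and this is the only delicate point of the discrete part, one needs $I\mapsto\MsJ\big(\udt(\tn,\cdot)\big)(x)-s\,\dRe(\tn+s,x,I)$ to be non-decreasing; its $I$-derivative is $-s\big(\partial_I b\,\dHe+\partial_I\dRe\big)$, with $\dHe$ evaluated at the slopes of $\udt(\tn,\cdot)$, and one bounds $\dHe$ from below by property \ref{it:pseudomon}, namely $\dHe\ge-\dx\,\CHL(\Lx(\tn))\,\wn/\bM$, and combines this with $\partial_I\dRe\le-K^{-1}$ from \eqref{hyp:R_decreasing}, $-\Lb\le\partial_I b\le0$ from \eqref{hyp:b_lipschitz}--\eqref{hyp:b_decreasing}, $\wn\le w_T$, $\Lb=\bM$, and the mesh restriction \eqref{eq:smooth_R}, which yields $\dx\,w_T\,\CHL(\LxT)\le K^{-1}$, so the $I$-derivative has the right sign (in the flat setting $\dHe\ge0$ makes this immediate and \eqref{eq:smooth_R} is not used here). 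Since $\Jdtkm(\tn+s)\le\Jdt(\tn+s)\le\Jdtkp(\tn+s)$, evaluating this second monotonicity at $f=\udt(\tn,\cdot)$ and combining with the first closes the induction; note that routing the estimate through $\udt$, which sits in the middle, is what makes property \ref{it:pseudomon} applicable.

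For item \ref{it:approximation_limitcont}: for $k_1\le k_2$ one has $k\mapsto\Jlkm$ non-decreasing and $k\mapsto\Jlkp$ non-increasing, hence, since $I\mapsto b(x,I)\Hcont(p)+\Rcont(t,x,I)$ is non-increasing (a consequence of \eqref{hyp:H0}--\eqref{hyp:b_decreasing}--\eqref{hyp:R_decreasing}), the comparison principle for \eqref{eq:vk}--\eqref{eq:wk} gives $v^{k_1}\le v^{k_2}$ and $w^{k_1}\ge w^{k_2}$; the uniform estimates \ref{it:approximation_Lipx}--\ref{it:approximation_bounds} then provide pointwise limits $\vl,\wl$ which are $\LxT$-Lipschitz in trait and $\Lt$-Lipschitz in time. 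The hard part will be identifying $\vl$ as the variational solution, equivalently the unique locally Lipschitz viscosity solution, of \eqref{eq:defvl} with parameter $\Jl\in BV(0,T)$: since $\Jl$ is only lower semi-continuous this requires the stability of viscosity solutions under the monotone convergence $\Jlkm\nearrow\Jl$, together with the well-posedness and representative-independence of the $BV$-parametrized equation given by Theorem~\ref{thm:CalvezLam2020}-(2), following the argument of \cite{CalvezHivertYoldas2022, CalvezLam2020}; the same reasoning applied to $\Jlkp\searrow\Jlp$ gives $\wk\searrow\wl$.
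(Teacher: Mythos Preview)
Your argument is correct and, for items \ref{it:approximation_Lipx}--\ref{it:approximation_limitcont}, is the same as the paper's: classical Hamilton--Jacobi regularity and comparison for $\vk,\wk$, the induction of Lemma~\ref{prop:Noel} for $\vdtk,\wdtk$, and the stability result from \cite{CalvezLam2020} for the limit $k\to\infty$.

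For item \ref{it:approximation_inequality} you take a slightly different and in fact more careful route. The paper orders the two monotonicities the other way around: it first applies the $I$-monotonicity to $\vdtk$ (and $\wdtk$), writing $\mathcal{M}_s^{\Jdtkm}\vdtk\le\mathcal{M}_s^{\Jdt}\vdtk$, and only afterwards the $f$-monotonicity to pass to $\udt$; it then adds the $\dRe$ terms using \eqref{hyp:R_decreasing}. The bald claim ``$I\mapsto\mathcal{M}_s^I$ is non-decreasing'' that the paper invokes is literally true only when $\dHe\ge 0$, i.e.\ in the flat setting; in the convex setting it needs exactly the lower bound on $\dHe$ coming from semi-concavity together with \eqref{eq:smooth_R}, as you spell out. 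Your choice to swap the order and evaluate the $I$-monotonicity at $f=\udt(\tn,\cdot)$ is a good one, because property \ref{it:pseudomon} of Prop.~\ref{thm:existence} is stated for $\udt$; the paper's order would instead require the analogous bound at the slopes of $\vdtk,\wdtk$, which holds (the semi-concavity proof of Section~\ref{sec:semiconcavity} does not use the constraint) but is not recorded explicitly. So your version closes a small gap that the paper glosses over.
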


\begin{rmq}
\label{rmq:ToConclude}
 Item \ref{it:approximation_limitcont} and the fact that $\Jl=\Jlp$ a.e., yield that $\vl=\wl$ thanks to  Theorem \ref{thm:CalvezLam2020}.
\end{rmq}

\begin{proof}
Items \ref{it:approximation_Lipx}-\ref{it:approximation_Lt} are classical properties of viscosity solutions, \ref{it:approximation_bounds} stems from a comparison principle, and \ref{it:approximation_limitcont} from \cite{CalvezLam2020}. Eventually, \ref{it:approximation_inequality} is a consequence of Lemma \ref{lem:contmonotonus}, and of the fact that $I\mapsto \mathcal{M}_s^I$ is non-decreasing, with $\mathcal{M}_s^I$ defined in \eqref{eq:MsJ}. Indeed, if $\vdtk(t^n,\cdot)\leq\udt(t^n,\cdot)\leq\wdtk\left(t^n,\cdot\right)$, and since \eqref{eq:CFLsatisfier} is satisfied, then
\[
\mathcal{M}_s^{\Jdtkm\left(t^n\right)}\vdtk\left(t^n,\cdot\right)\le \mathcal{M}_s^{\Jdt\left(t^n\right)}\vdtk\left(t^n,\cdot\right)\le \mathcal{M}_s^{\Jdt\left(t^n\right)}\udt\left(t^n,\cdot\right),
\]
and
\[
\mathcal{M}_s^{\Jdt\left(t^n\right)}\udt\left(t^n,\cdot\right)\le \mathcal{M}_s^{\Jdt\left(t^n\right)}\wdtk\left(t^n,\cdot\right)\le \mathcal{M}_s^{\Jdtkp\left(t^n\right)}\wdtk\left(t^n,\cdot\right).
\]
These two chains of inequalities give the desired result thanks to \eqref{hyp:R_decreasing}, and a straightforward induction.
\end{proof}

\subsubsection{$\left(u_0, I_0\right)$ is the viscosity solution of \eqref{eq:HJ}}\label{sec:ident}

In this section, we show that $\vl=\wl=\ul$, and that Prop. \ref{thm:limitconv} holds.
Following \cite{CrandallLions,CalvezHivertYoldas2022}, let us introduce for $\alpha\in(0,1)$, $\sigma>0$, and $(t,x,\tau,\xi)\in[0,T)\times \R\times[0,T]\times\R$,
\begin{align}\label{eq:defpsim}
\psim\left(t,x,\tau,\xi\right)=&\vk\left(t,x\right)-\vdtk\left(\tau,\xi\right)-\frac{\left(x-\xi\right)^2}{2\sqrt{\dx}}-\frac{\left(t-\tau\right)^2}{2\sqrt{\dt}}
\\&-\alpha\frac{e^t}{2}(x^2+\xi^2)-\frac{\alpha}{T-t}-\left(\sigma+\Ch^2\alpha e^T\right)t, \nonumber
%\end{align}
%and
%\begin{align}
\\ \label{eq:defpsip}
\psip\left(t,x,\tau,\xi\right)=&\wk\left(t,x\right)-\wdtk\left(\tau,\xi\right)+\frac{\left(x-\xi\right)^2}{2\sqrt{\dx}}+\frac{\left(t-\tau\right)^2}{2\sqrt{\dt}}
\\&+\alpha\frac{e^t}{2}(x^2+\xi^2)+\frac{\alpha}{T-t}+\left(\sigma+\Ch^2\alpha e^T\right)t, \nonumber
\end{align}
with $\Ch$ defined in \eqref{eq:Linf}. Thanks to Lemma \ref{lem:propofregularizedsolutions}, $\psim$ and $\psip$ respectively admit a maximum and minimum on $[0,T)\times\R\times[0,T]\times\R$, and the following lemma holds:

\begin{lem}
\label{lem:psi}
Let $\psim$ and $\psip$ be defined respectively by \eqref{eq:defpsim}, \eqref{eq:defpsip}. We have the following properties:
\begin{itemize}
\item For all $\alpha\in(0,1)$, and for all $\sigma>0$, $\psim$ admits a maximum, and $\psip$ a  minimum on $[0,T)\times\R\times[0,T]\times\R$. They are reached respectively at $\left(t^-,x^-,\tau^-, \xi^-\right)$ and $\left(t^+,x^+,\tau^+, \xi^+\right)$.
\item There exists $\sigma^+\left(\dt,k\right)$, $\sigma^-\left(\dt,k\right)$ positive, with $\sigma^\pm\left(\dt,k\right)\to_{\dt\to0}0$ for all $k>0$ (though not uniformly in $k$), such that, for all $\alpha\in (0,1)$, and $\dt$ small enough
\begin{equation}
\label{eq:estimate_tplus}
t^\pm\leq 2\Lt\sqrt{\dt}.
\end{equation}
Moreover,
\begin{align}
\psim\left(t^-,x^-,\tau^-, \xi^-\right)&\le 2\Lt^2\sqrt{\dt}+10\LxT^2\sqrt{\dx}
\nonumber
\\
\psip\left(t^+,x^+,\tau^+, \xi^+\right)&\ge -2\Lt^2\sqrt{\dt}-10\LxT^2\sqrt{\dx}, \label{eq:estimate_psip}
\end{align}
where $\Lx$ is defined in Prop. \ref{thm:existence}-\ref{it:Lx}, and $\Lt$ in Prop. \ref{prop:Noel}-\ref{it:udt_stability_liptime}.
\end{itemize}
\end{lem}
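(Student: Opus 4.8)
The statement is a standard "doubling of variables" estimate from the theory of viscosity solutions, adapted to compare a continuous viscosity solution $\vk$ with its discrete counterpart $\vdtk$ (and symmetrically for $\wk$, $\wdtk$). I would proceed in three stages: first establish that the maximum/minimum is attained, then localize it near the diagonal $t=\tau$, $x=\xi$ using the penalization terms, and finally run the viscosity-solution comparison argument to bound the value of $\psim$ at its maximum.

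\emph{Step 1: existence of extrema.} Since $\vk(t,\cdot)$ and $\vdtk(\tau,\cdot)$ are $\LxT$-Lipschitz and bounded below by $\ua|x|+\ub_t$ (Lemma \ref{lem:propofregularizedsolutions}, items \ref{it:approximation_Lipx} and \ref{it:approximation_bounds}), while the penalization $-\alpha\frac{e^t}{2}(x^2+\xi^2)$ is strictly coercive in $(x,\xi)$, the function $\psim$ tends to $-\infty$ as $|x|+|\xi|\to\infty$, uniformly in $(t,\tau)$ in the compact $[0,T)\times[0,T]$. The term $-\frac{\alpha}{T-t}$ drives $\psim\to-\infty$ as $t\to T^-$, so the supremum is attained on a compact subset of $[0,T)\times\R\times[0,T]\times\R$; continuity of all ingredients then gives a maximizer $(t^-,x^-,\tau^-,\xi^-)$. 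The argument for $\psip$ is identical with signs reversed.

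\emph{Step 2: localization near the diagonal and the bound \eqref{eq:estimate_tplus}.} Comparing $\psim$ at the maximizer with its value at a well-chosen competitor (e.g. $(t^-,x^-,t^-,x^-)$, using the Lipschitz-in-time bound $\Lt$ from Lemma \ref{prop:Noel}-\ref{it:udt_stability_liptime} to control $\vdtk(t^-,x^-)-\vdtk(\tau^-,\xi^-)$) yields the familiar estimates $\frac{(t^--\tau^-)^2}{2\sqrt{\dt}}\lesssim \Lt|t^--\tau^-|$ hence $|t^--\tau^-|\le 2\Lt\sqrt{\dt}$, and analogously $|x^--\xi^-|\le 2\LxT\sqrt{\dx}$, so the two penalization quadratics are $O(\Lt^2\sqrt{\dt})$ and $O(\LxT^2\sqrt{\dx})$. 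To get $t^-\le 2\Lt\sqrt{\dt}$ itself one argues by contradiction: if $t^->0$, then $(t,x)\mapsto \vk(t,x)-(\text{penalization})$ has an interior maximum at $(t^-,x^-)$ in the $(t,x)$ variables, one uses the viscosity subsolution property of $\vk$ and the discrete "supersolution" property of $\vdtk$ (coming from the scheme \eqref{eq:scheme} reformulated via $\mathcal{M}_s^I$, together with the CFL \eqref{eq:CFLsatisfier} that makes $\mathcal{M}_s^I$ monotone, Lemma \ref{lem:contmonotonus}), and the choice of the constants $\sigma+\Ch^2\alpha e^T$ and the $\alpha\frac{e^t}{2}(x^2+\xi^2)$ term is precisely calibrated so that the time-derivative of the test function at the maximum must be $\le 0$; this forces $t^-$ to lie in the initial layer $[0,2\Lt\sqrt{\dt}]$ where one falls back on the comparison of initial data $\vk(0,\cdot)=\vdtk(0,\cdot)=\uin$.

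\emph{Step 3: the value estimates \eqref{eq:estimate_psip}.} On the initial layer, $\psim(t^-,x^-,\tau^-,\xi^-)\le \psim(0,x^-,0,x^-)+ (\text{time-Lipschitz corrections})$; since at $t=0$ we have $\vk(0,x^-)-\vdtk(0,x^-)=0$ and the penalization is nonnegative except for $-\frac{\alpha}{T-0}\le 0$ and $-(\sigma+\Ch^2\alpha e^T)\cdot 0=0$, one gets $\psim(t^-,\dots)\le C\Lt^2\sqrt{\dt}+C'\LxT^2\sqrt{\dx}$; tracking the numerical constants through the Lipschitz bounds gives the stated $2\Lt^2\sqrt{\dt}+10\LxT^2\sqrt{\dx}$, and defining $\sigma^-(\dt,k)$ to absorb the residual error terms (which involve $k$ through the $k$-Lipschitz regularity of $\Jdtkm$ and the consistency error of $\dHe$ via $\CLH\eta$, hence are not uniform in $k$) completes the bound. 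The symmetric argument gives the lower bound for $\psip$.

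\emph{Main obstacle.} The delicate point is Step 2's contradiction argument: one must combine the continuous viscosity inequality for $\vk$ with the \emph{discrete} inequality satisfied by $\vdtk$ at a point $(\tau^-,\xi^-)$ which is not a grid point — this requires using the piecewise-in-time, $\mathcal{M}_s^I$-based reformulation \eqref{eq:defudt} and carefully exploiting monotonicity (Lemma \ref{lem:contmonotonus}) together with the convexity/positivity of $\dHe$ to convert the discrete scheme into a one-sided pointwise inequality for a smooth test function, while keeping track of the consistency error $\CLH\eta$ and the semi-concavity bound $\wn$ (item \ref{it:sc}) that controls the second-order finite difference. This is exactly where the strong CFL \eqref{eq:CFLsatisfier} and the choice $\eta=\min(\sqrt{\dx},1)$ are used, and it is the step where the argument genuinely differs from the classical Crandall–Lions convergence proof.
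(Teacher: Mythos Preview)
Your plan is essentially the paper's own argument (the Crandall--Lions doubling adapted as in \cite{CalvezHivertYoldas2022}), and the three stages you outline match the paper's structure. Two corrections are worth making, though.

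First, the contradiction in Step~2 is run on $\tau^\pm$, not on $t^\pm$: you assume $\tau^+>0$, so that $\tau^+=t^{n^+}+s^+$ with $s^+\in(0,\dt]$, and this is what lets you apply the discrete operator $\mathcal{M}_{s^+}^{\Jdtkp(\tau^+)}$ to a smooth comparison function $\owk(\tau,\xi)=\frac{(x^+-\xi)^2}{2\sqrt{\dx}}+\frac{(t^+-\tau)^2}{2\sqrt{\dt}}+\alpha\frac{e^{t^+}}{2}\xi^2$ (plus a constant so that it touches $\wdtk$ from above at $(\tau^+,\xi^+)$). The conclusion is $\tau^+=0$; the bound $t^+\le 2\Lt\sqrt{\dt}$ then follows immediately from the diagonal estimate $|t^+-\tau^+|\le 2\Lt\sqrt{\dt}$ you already have. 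Relatedly, to get the constant $10$ in $|x^+-\xi^+|\le 10\LxT\sqrt{\dx}$ (rather than $2$) you first need the auxiliary bound $\alpha e^{t^+}\max(|x^+|,|\xi^+|)\le 4\LxT$, obtained by comparing $\psip$ at the minimizer with its value at $(t^+,0,\tau^+,0)$; this bound is also what keeps the gradient of the test function inside $[-\Linf,\Linf]$ so that the CFL \eqref{eq:CFLsatisfier} applies.

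Second, the semi-concavity bound $w(\tn)$ from item~\ref{it:sc} is \emph{not} used anywhere in this lemma. The discrete one-sided inequality for $\wdtk$ comes purely from the monotonicity of $\mathcal{M}_{s^+}^I$ (Lemma~\ref{lem:contmonotonus}) applied to the touching inequality $\wdtk\le\owk+k^+$ at time $t^{n^+}$, together with the Lipschitz and consistency estimates \eqref{hyp:Heta_approximation}--\eqref{hyp:Reta_approximation} and the $k$-Lipschitz regularity of $\Jlkp$, $\Jdtkp$ (which is where the non-uniformity in $k$ of $\sigma^+(\dt,k)$ enters). Semi-concavity is needed earlier, for the BV bound on $\bI$, not here.
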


\begin{proof}
As it is the adaptation of the particular case \cite[Lemma 4.6]{CalvezHivertYoldas2022}, the proofs of these two results are strongly related.
It is however worth detailing it in what follows, since the approximations $\dHe\left(p,p\right)$, and $\dRe$ of $\Hcont\left(p\right)$ and $\Rcont$, and the general framework considered here brings technical difficulties.   We focus here on the results for $\sigma^+(\dt,k)$ and $\psip$. The others can be proved with straightforward adaptations.

The proof is divided in three steps, first show that
\begin{equation}\label{eq:identb1}
\alpha e^{t^+}\max\left(\left|x^+\right|,\left|\xi^+\right|\right)\leq 4\LxT,
\end{equation}
and then that
\begin{equation}\label{eq:identclose}
\left|t^+-\tau^+\right|\leq 2 \Lt\sqrt{\dt},\;\;\text{and}\;\; \left|x^+-\xi^+\right|\leq 10\LxT \sqrt{\dx}.
\end{equation}
Finally, using these two results, define $\sigma^+(\tau,k)$, such that $\tau^+=0$. It yields $t^+\leq 2 \Lt\sqrt{\dt}$, and deduce the desired bound on $\psip$.

For the first step, notice that $\psip\left(t^+,x^+,\tau^+,\xi^+\right)\leq\psip\left(t^+,0,\tau^+,0\right)$, and so that
\[
\wk\left(t^+,x^+\right)-\wdtk\left(\tau^+,\xi^+\right)+\alpha\frac{e^{t^+}}{2}({x^+}^2+{\xi^+}^2)\leq \wk\left(t^+,0\right)-\wdtk\left(\tau^+,0\right).
\]
 Elementary computations and the Lipschitz-regularity of $\wk$, and $\wdtk$ provided by Lemma \ref{lem:propofregularizedsolutions} then give \eqref{eq:identb1}
\details{
\[
\alpha\frac{e^{t^+}}{2}({x^+}^2+{\xi^+}^2)\le \wk\left(t^+,0\right)-\wdtk\left(\tau^+,0\right)-\wk\left(t^+,x^+\right)+\wdtk\left(\tau^+,\xi^+\right)\leq \LxT\left(\left|x^+\right|+\left|\xi^+\right|\right).
\]
}
Similarly, the bound on $\left(x^+-\xi^+\right)$ is a consequence of $\psip\left(t^+,x^+,\tau^+,\xi^+\right)\le\psip\left(t^+,x^+,\tau^+,x^+\right)$.Thanks to the non-negativity of ${\xi^+}^2$, and after elementary computations, it yields
\details{
\[
-\wdtk\left(\tau^+,\xi^+\right)+\frac{\left(x^+-\xi^+\right)^2}{2\sqrt{\dx}}+\alpha\frac{e^t}{2}{\xi^+}^2\le -\wdtk\left(\tau^+,x^+\right)+\alpha\frac{e^t}{2}{x^+}^2
\]
}
\[
\frac{\left(x^+-\xi^+\right)^2}{2\sqrt{\dx}}\le \wdtk\left(\tau^+,\xi^+\right)-\wdtk\left(\tau^+,x^+\right)+\alpha\frac{e^t}{2}\left({x^+}^2-{\xi^+}^2\right).
\]
\details{
\[
\frac{\left(x^+-\xi^+\right)^2}{2\sqrt{\dx}}\le \LxT \left|x^+-\xi^+\right|+\alpha\frac{e^t}{2}\left({x^+}^2-{\xi^+}^2\right)
\]
}
Thus, \eqref{eq:identb1}, proved previously and Lemma \ref{lem:propofregularizedsolutions}, gives the desired result, since
\[
\left(x^+-\xi^+\right)^2\leq 2\sqrt{\dx}\left[\LxT \left|x^+-\xi^+\right|+\alpha\frac{e^t}{2}\left({x^+}-{\xi^+}\right)\left({x^+}+{\xi^+}\right)\right].
\]
\details{
\[
\left|x^+-\xi^+\right|\leq 2\sqrt{\dx}\left[\LxT +\alpha\frac{e^t}{2}\left({x^+}+{\xi^+}\right)\right]\leq 10\LxT\sqrt{\dx}.
\]}
For the other estimate in \eqref{eq:identclose}, use $\psip\left(t^+,x^+,\tau^+,\xi^+\right)\leq\psip\left(t^+,x^+,t^+,\xi^+\right)$ and Lemma \ref{lem:propofregularizedsolutions} to get
\details{
\[
 	-\wdtk\left(\tau,\xi\right)+\frac{\left(t-\tau\right)^2}{2\sqrt{\dt}}\le-\wdtk\left(t,\xi\right)
\]
}
\[
\frac{\left(t^+-\tau^+\right)^2}{2\sqrt{\dt}}\le\wdtk\left(\tau^+,\xi^+\right)-\wdtk\left(t^+,\xi^+\right)\leq \Lt \left|t^+-\tau^+\right|.
\]

Let us now prove that $\tau^+=0$. We argue by contradiction, supposing that $\tau^+>0$, and defining $\sigma^+\left(\dt,k\right)$ such that this assumption cannot hold.
Let us introduce for all $(t,x)\in [0,T)\times\R$,
\begin{align*}
\phip\left(t,x\right)=&\wdtk\left(\tau^+,\xi^+\right)-\frac{\left(x-\xi^+\right)^2}{2\sqrt{\dx}}-\frac{\left(t-\tau^+\right)^2}{2\sqrt{\dt}}
\\&-\alpha\frac{e^t}{2}\left(x^2+{\xi^+}^2\right)-\frac{\alpha}{T-t}-\left(\sigma+\Ch^2\alpha e^T\right)t,
\end{align*}
so that
$
\psip\left(t,x,\tau^+,\xi^+\right)=\wk\left(t,x\right)-\phip\left(t,x\right)$. As $\phip$ is minimal at $\left(t^+,x^+\right)$, and since $\wk$ is the viscosity solution of \eqref{eq:wk}, one has
\begin{equation}\label{eq:viscsolphip}
\partial_t\phip\left(t^+,x^+\right)+b\left(x^+, \Jlkp\left(t^+\right)\right)\Hcont\left(\nabla_x\phip\left(t^+,x^+\right)\right)+\Rcont\left(t^+,x^+,\Jlkp\left(t^+\right)\right)\ge 0,
\end{equation}
\details{
\begin{align*}
 \partial_t\phip\left(t^+,x^+\right)=& -\frac{t^+-\tau^+}{\sqrt{\dt}} - \alpha \frac{\e^{t^+}}{2}\left( {x^+}^2+{\xi^+}^2 \right) - \frac{\alpha}{\left(T-t^+\right)^2}-  \sigma - \Ch^2 \alpha \e^{T} \\
 \nabla_x \phip\left(t^+,x^+\right)=& -\frac{x^+-\xi^+}{\sqrt{\dx}} -\alpha \e^{t^+} x^+.
\end{align*}
}
that can be reformulated as
\begin{align}
\nonumber
-&\frac{t^+-\tau^+}{\sqrt{\dt}} - \alpha \frac{\e^{t^+}}{2}\left( {x^+}^2+{\xi^+}^2 \right) - \frac{\alpha}{\left(T-t^+\right)^2}-  \sigma - \Ch^2 \alpha \e^{T}
\\
&+b\left(x^+, \Jlkp\left(t^+\right)\right)\Hcont\left( -\frac{x^+-\xi^+}{\sqrt{\dx}} -\alpha \e^{t^+} x^+ \right)+\Rcont\left(t^+,x^+,\Jlkp\left(t^+\right)\right)\ge 0.  \label{eq:star}
\end{align}
A similar inequality is obtained considering $\wdtk$ and scheme \eqref{eq:wdtk}. As, for all $\left(\tau,\xi\right)\in[0,T]\times\R$, the inequality $\psip\left(t^+,x^+,\tau,\xi\right)\ge \psip\left(t^+,x^+,\tau^+,\xi^+\right)$ holds, one has
\begin{equation}
\label{eq:njgiozerp1}
\owk\left(\tau,\xi\right)+k^+\geq \wdtk\left(\tau,\xi\right),
\end{equation}
where  for all $\left(\tau,\xi\right)\in[0,T]\times\R$,
\begin{equation}
\label{eq:defwbar}
\owk\left(\tau,\xi\right)=\frac{\left(x^+-\xi\right)^2}{2\sqrt{\dx}}+\frac{\left(t^+-\tau\right)^2}{2\sqrt{\dt}}+\alpha\frac{e^{t^+}}{2}\xi^2,
\end{equation}
and
\begin{align}
k^+&=\wdtk\left(\tau^+,\xi^+\right) - \frac{\left(x^+-\xi^+\right)^2}{2\sqrt{\dx}}- \frac{\left(t^+-\tau^+\right)^2}{2\sqrt{\dt}}-\alpha \frac{\e^{t^+}}{2}{\xi^+}^2
=\wdtk\left(\tau^+,\xi^+\right)-\owk\left(\tau^+,\xi^+\right). \label{eq:defkplus}
\end{align}
Recall now that  $\tau^+\in[0,T)$ is supposed such that $\tau^+>0$. Then, there exists $n^+\in\ccl 0,N_T-1\ccr$, and $s^+\in(0,\dt]$ such that
$\tau^+=t^{n^+}+s^+$.
The next step consists in taking $\tau=t^{n^+}$ and $\xi=\xi^+$, and propagate the inequality  \eqref{eq:njgiozerp1} to time $\tau^+$ with Lemma \ref{lem:contmonotonus}.
 To that extend, we show that the arguments of $\dHe$ in
 $\mathcal{M}_{s^+}^{\Jdtkp\left(\tau^+\right)}\left(\wdtk\left(t^{n^+}\right),\cdot\right)\left(\xi^+\right)$ and $\mathcal{M}_{s^+}^{\Jdtkp\left(\tau^+\right)}\left(\owk\left(t^{n^+}\right),\cdot\right)\left(\xi^+\right)$ are bounded by $\Linf$.
 Concerning $\wdtk$, it is in fact a consequence of Lemma \ref{lem:propofregularizedsolutions}. Consider now $\owk$, one has
 \details{
 \begin{align*}
  \frac{\owk\left(\tau,\xi^++\dx\right)-\owk\left(\tau,\xi^+\right)}{\dx}&=-\frac{x^+-\xi^+}{\sqrt{\dx}}+\frac{\sqrt{\dx}}{2}+\alpha \e^{t^+} \left(\xi^++\frac{\dx}{2}\right) \\
  \frac{\owk\left(\tau,\xi^+\right)-\owk\left(\tau,\xi^+-\dx\right)}{\dx}&=-\frac{x^+-\xi^+}{\sqrt{\dx}}-\frac{\sqrt{\dx}}{2}+\alpha \e^{t^+} \left(\xi^+-\frac{\dx}{2}\right)
 \end{align*}
 }
 \[
 \frac{\owk\left(\tau,\xi^+\pm\dx\right)-\owk\left(\tau,\xi^+\right)}{\dx}=\mp\frac{x^+-\xi^+}{\sqrt{\dx}}+\frac{\sqrt{\dx}}{2}\pm\alpha \e^{t^+} \xi^++ \alpha \e^{t^+}\frac{\dx}{2},
 \]
so that \eqref{eq:identb1}-\eqref{eq:identclose}, and \eqref{eq:smooth_R} yield
\begin{equation}
\label{eq:wbar_majo_taux} \left| \owk\left(\tau,\xi^+\pm\dx\right)-\owk\left(\tau,\xi^+\right)\right|\le \Linf \dx.
\end{equation}
We may now compute the values of the image of each side of \eqref{eq:njgiozerp1} by $\Mm_{s^+}^{\Jdtkp}$. By definition of $\wdtk$,  one has
\begin{equation}\label{eq:njgiozerp2}
\Mm_{s^+}^{\Jdtkp\left(\tau^+\right)}\left(\wdtk\left(t^{n^+},\cdot\right)\right)\left(\xi^+\right)=\wdtk\left(\tau^+,\xi^+\right)+s^+\dRe\left(\tau^+,\xi^+,\Jdtkp\left(\tau^+\right)\right),
\end{equation}
and the expression of $\MsJ$ (see \eqref{eq:MsJ}) yields
\begin{align}
\label{eq:njgiozerp3}
&\Mm_{s^+}^{\Jdtkp\left(\tau^+\right)}\left(\owk\left(t^{n^+},\cdot\right)+k^+\right)\left(\xi^+\right)=\owk\left(t^{n^+},\xi^+\right)+k^+
\\-&s^+b\left(\xi^+, \Jdtkp\left(\tau^+\right) \right)\dHe\left(\frac{\owk\left(t^{n^+},\xi^+\right)-\owk\left(t^{n^+},\xi^+-\dx\right)}{\dx},\frac{\owk\left(t^{n^+},\xi^++\dx\right)-\owk\left(t^{n^+},\xi^+\right)}{\dx}\right). \nonumber
\end{align}
The propagation of inequality \eqref{eq:njgiozerp1} to time $\tau^+$ with Lemma \ref{lem:contmonotonus} then gives
\begin{align*}
 0\le&\owk\left(t^{n^+},\xi^+\right)+k^+ - \wdtk\left(\tau^+,\xi^+\right) -s^+\dRe\left(\tau^+,\xi^+,\Jdtkp\left(\tau^+\right)\right)
\\-&s^+b\left(\xi^+, \Jdtkp\left(\tau^+\right) \right)\dHe\left(\frac{\owk\left(t^{n^+},\xi^+\right)-\owk\left(t^{n^+},\xi^+-\dx\right)}{\dx},\frac{\owk\left(t^{n^+},\xi^++\dx\right)-\owk\left(t^{n^+},\xi^+\right)}{\dx}\right),
\end{align*}
that is, using \eqref{eq:defkplus} and \eqref{eq:defwbar},
\details{
\begin{align*}
 0\le  \owk\left(t^{n^+}, \xi^+\right) -\owk\left(\tau^+,\xi^+\right) &-s^+\dRe\left(\tau^+,\xi^+,\Jdtkp\left(\tau^+\right)\right)
 \\-s^+b\left(\xi^+, \Jdtkp(\tau^+) \right)\dHe&\left(
 -\frac{x^+-\xi^+}{\sqrt{\dx}} -\frac{\sqrt{\dx}}{2} + \alpha\e^{t^+} \xi^+ - \alpha\e^{t^+} \frac{\dx}{2}\right., \\&
\left. -\frac{x^+-\xi^+}{\sqrt{\dx}} +\frac{\sqrt{\dx}}{2} + \alpha\e^{t^+} \xi^+ +\alpha\e^{t^+} \frac{\dx}{2}
 \right),
\end{align*}
and this estimate can be simplified one more time, using \eqref{eq:defwbar} and dividing by $s^+>0$,
}
\begin{align}
 \label{eq:etoile}
 0 \le \frac{t^+-t^{n^+} - s^+/2}{\sqrt{\dt}}-&\dRe\left(\tau^+,\xi^+,\Jdtkp(\tau^+)\right)
 \\-b\left(\xi^+, \Jdtkp\left(\tau^+\right) \right)\dHe&\left(
 -\frac{x^+-\xi^+}{\sqrt{\dx}} -\frac{\sqrt{\dx}}{2} + \alpha\e^{t^+} \xi^+ - \alpha\e^{t^+} \frac{\dx}{2}\right.,
 \nonumber
 \\&
\left. -\frac{x^+-\xi^+}{\sqrt{\dx}} +\frac{\sqrt{\dx}}{2} + \alpha\e^{t^+} \xi^+ +\alpha\e^{t^+} \frac{\dx}{2}
 \right).
 \nonumber
\end{align}
The choice of $\sigma^+(\dt,k)$, and the conclusion of the argument are a consequence of \eqref{eq:star} and the previous estimate. Indeed, adding the two expressions yields
\begin{equation}
\label{eq:itispositive}
 \mathfrak{T}_1+\mathfrak{T}_2+\mathfrak{T}_3 \ge 0,
\end{equation}
 with
 \details{
\begin{align*}
\mathfrak{T}_1&= \frac{t^+ -t^{n^+} - s^+/2}{\sqrt{\dt}}-\frac{t^+ -t^{n^+} - s^+}{\sqrt{\dt}} - \alpha\frac{\e^{t^+}}{2} \left({x^+}^2+{\xi^+}^2 \right) - \frac{\alpha}{\left(T-t^+\right)^2}- \sigma - \Ch^2\alpha\e^T \\
\mathfrak{T}_2&=\Rcont\left(t^+,x^+,\Jlkp\left(t^+\right)\right)-\dRe\left(\tau^+,\xi^+,\Jdtkp(\tau^+)\right) \\
\mathfrak{T}_3&=b\left(x^+, \Jlkp\left(t^+\right)\right)\Hcont\left( -\frac{x^+-\xi^+}{\sqrt{\dx}} -\alpha \e^{t^+} x^+ \right)
-b\left(\xi^+, \Jdtkp(\tau^+) \right)\\&\dHe\left(
 -\frac{x^+-\xi^+}{\sqrt{\dx}} -\frac{\sqrt{\dx}}{2} + \alpha\e^{t^+} \xi^+ - \alpha\e^{t^+} \frac{\dx}{2},
 -\frac{x^+-\xi^+}{\sqrt{\dx}} +\frac{\sqrt{\dx}}{2} + \alpha\e^{t^+} \xi^+ +\alpha\e^{t^+} \frac{\dx}{2}
 \right).
\end{align*}
}
\begin{align*}
\mathfrak{T}_1&= \frac{s^+}{2\sqrt{\dt}} - \alpha\frac{\e^{t^+}}{2} \left({x^+}^2+{\xi^+}^2 \right) - \frac{\alpha}{\left(T-t^+\right)^2}- \sigma - \Ch^2\alpha\e^T \\
\mathfrak{T}_2&=\Rcont\left(t^+,x^+,\Jlkp\left(t^+\right)\right)-\dRe\left(\tau^+,\xi^+,\Jdtkp(\tau^+)\right) \\
\mathfrak{T}_3&=b\left(x^+, \Jlkp\left(t^+\right)\right)\Hcont\left( -\frac{x^+-\xi^+}{\sqrt{\dx}} -\alpha \e^{t^+} x^+ \right)
-b\left(\xi^+, \Jdtkp(\tau^+) \right)\\&\dHe\left(
 -\frac{x^+-\xi^+}{\sqrt{\dx}} -\frac{\sqrt{\dx}}{2} + \alpha\e^{t^+} \xi^+ - \alpha\e^{t^+} \frac{\dx}{2},
 -\frac{x^+-\xi^+}{\sqrt{\dx}} +\frac{\sqrt{\dx}}{2} + \alpha\e^{t^+} \xi^+ +\alpha\e^{t^+} \frac{\dx}{2}
 \right),
\end{align*}
and these three terms are then estimated independently. To begin with, as $s^+\in(0,\dt]$, one has
\begin{equation}
\label{eq:T1}
\mathfrak{T}_1\le \sqrt{\dt} - \alpha\frac{\e^{t^+}}{2} \left({x^+}^2+{\xi^+}^2 \right) - \sigma -\Ch^2\alpha\e^T.
\end{equation}
The second one is a consequence of the $K$-Lipschitz regularity of $\Rcont$ and $\dRe$ in \eqref{hyp:R_decreasing}-\eqref{hyp:R_bounded}-\eqref{hyp:R_t-Lipschitz}, of the $k$-Lipschitz regularity of $\Jlkp$ and $\Jdtkp$ (Lemma \ref{lem:imporvedconvergence}), and of the fact that $\dRe$ is supposed to be a good approximation of $\Rcont$,
\details{
\begin{align*}
 \mathfrak{T}_2&= \Rcont\left(t^+,x^+,\Jlkp\left(t^+\right)\right)- \Rcont\left(\tau^+,x^+,\Jlkp\left(t^+\right)\right) \\
 & + \Rcont\left(\tau^+,x^+,\Jlkp\left(t^+\right)\right) - \Rcont\left(\tau^+,\xi^+,\Jlkp\left(t^+\right)\right) \\
 & + \Rcont\left(\tau^+,\xi^+,\Jlkp\left(t^+\right)\right) - \Rcont\left(\tau^+,\xi^+,\Jdtkp\left(t^+\right)\right) \\
 & + \Rcont\left(\tau^+,\xi^+,\Jdtkp\left(t^+\right)\right) - \Rcont\left(\tau^+,\xi^+,\Jdtkp\left(\tau^+\right)\right) \\
 &+ \Rcont\left(\tau^+,\xi^+,\Jdtkp\left(\tau^+\right)\right)
- \dRe\left(\tau^+,\xi^+,\Jdtkp(\tau^+)\right)
\end{align*}
}
\[
\mathfrak{T}_2 \le K \left|t^+-\tau^+\right| + K\left|x^+-\xi^+\right| + K \left\|\Jlkp-\Jdtkp\right\|_\infty +Kk\left|\tau^+-t^+\right| + \left\| \Rcont-\dRe\right\|_\infty,
\]
and thanks to \eqref{eq:identclose}, and to \eqref{hyp:Reta_approximation}-\eqref{eq:etachoice}, we obtain
\details{
\begin{align*}
 \mathfrak{T}_2 &\le 2K\Lt\sqrt{\dt} +10K\Lx(T) \sqrt{\dx}  +  K \left\|\Jlkp-\Jdtkp\right\|_\infty + 2Kk\Lt \sqrt{\dt} + K\eta,
\end{align*}
 that is with \eqref{eq:etachoice},
\begin{equation*}
 \mathfrak{T}_2 \le K\left[ 2\Lt(1+k)  \right]+ L\left[ 1+10\Lx(T)\right] \sqrt{\dx} + K\left\|\Jlkp-\Jdtkp\right\|_\infty
\end{equation*}
}
\begin{equation}
  \label{eq:T2}
  \mathfrak{T}_2 \le C_2^k\left( \sqrt{\dt}+\sqrt{\dx}+  \left\|\Jlkp-\Jdtkp\right\|_\infty\right),
\end{equation}
\details{with $C_2^k=K\max\{ 2(1+k)\Lt; 10 \Lx(T)+1\}$. }where $C_2^k$ depends only on $K$, $\Lt$, $\Lx(T)$ and on $k$.
The estimate of $\mathfrak{T}_3$ is obtained similarly, using the Lipschitz property of $b$ in  \eqref{hyp:b_lipschitz},the fact that $\dHe\left(p,p\right)$ is an approximation of $\Hcont\left(p\right)$ in \eqref{hyp:Heta_approximation}, and the $\Ch$-Lipschitz property of $\dHe$. Note that the latter is true because \eqref{eq:wbar_majo_taux}
holds. More precisely, $\mathfrak{T}_3$ is reformulated as
\[
 \mathfrak{T}_3=\mathfrak{T}_3^a+\mathfrak{T}_3^b+\mathfrak{T}_3^c+\mathfrak{T}_3^d,
\]
with
\begin{align*}
 &\mathfrak{T}_3^a= b\left(\xi^+, \Jdtkp(\tau^+) \right)\dHe\left(
 -\frac{x^+-\xi^+}{\sqrt{\dx}} - \alpha\e^{t^+} x^+ ,
 -\frac{x^+-\xi^+}{\sqrt{\dx}} - \alpha\e^{t^+} x^+
 \right) \\
 &-b\left(\xi^+, \Jdtkp(\tau^+) \right)\dHe\left(
 -\frac{x^+-\xi^+}{\sqrt{\dx}} -\frac{\sqrt{\dx}}{2} + \alpha\e^{t^+} \xi^+ - \alpha\e^{t^+} \frac{\dx}{2},
 -\frac{x^+-\xi^+}{\sqrt{\dx}} +\frac{\sqrt{\dx}}{2} + \alpha\e^{t^+} \xi^+ +\alpha\e^{t^+} \frac{\dx}{2}
 \right) \\
 &\mathfrak{T}_3^b=-b\left(\xi^+, \Jdtkp(\tau^+) \right)\dHe\left(
 -\frac{x^+-\xi^+}{\sqrt{\dx}} - \alpha\e^{t^+} x^+ ,
 -\frac{x^+-\xi^+}{\sqrt{\dx}} - \alpha\e^{t^+} x^+
 \right)
 \\ &+ b\left(\xi^+, \Jdtkp(\tau^+) \right)\Hcont\left(
 -\frac{x^+-\xi^+}{\sqrt{\dx}} - \alpha\e^{t^+} x^+
 \right)
 \\
 &\mathfrak{T}_3^c = -b\left(\xi^+, \Jdtkp(\tau^+) \right)\Hcont\left(
 -\frac{x^+-\xi^+}{\sqrt{\dx}} - \alpha\e^{t^+} x^+
 \right)
 + b\left(x^+, \Jlkp(\tau^+) \right)\Hcont\left(
 -\frac{x^+-\xi^+}{\sqrt{\dx}} - \alpha\e^{t^+} x^+
 \right)
 \\
& \mathfrak{T}_3^d =  -b\left(x^+, \Jlkp(\tau^+) \right)\Hcont\left(
 -\frac{x^+-\xi^+}{\sqrt{\dx}} - \alpha\e^{t^+} x^+
 \right)
 + b\left(x^+, \Jlkp\left(t^+\right)\right)\Hcont\left( -\frac{x^+-\xi^+}{\sqrt{\dx}} -\alpha \e^{t^+} x^+ \right).
\end{align*}
Then, $\mathfrak{T}_3^a$ is estimated with \eqref{eq:Heta_defCH}-\eqref{eq:Linf}, $\mathfrak{T}_3^b$ with \eqref{hyp:Heta_approximation}, $\mathfrak{T}_3^c$ with the Lipschitz property of $b$ in \eqref{hyp:b_lipschitz}, and
\[
 \sup_{|p|\le\Linf}\left|\Hcont\left(p\right)\right|\le \Linf \Ch+\mathcal{C}_\Linf ,
\]
that holds true since $\Hcont\left(0\right)=0$, because $\dHe$ enjoys $\Ch$-Lipschitz regularity, and because it is an approximation of $\Hcont$, with \eqref{hyp:Heta_approximation}-\eqref{eq:etachoice}. Eventually, the $k$-Lipschitz property of $\Jlkp$ (Lemma \ref{lem:imporvedconvergence}) is used in $\mathfrak{T}_3^d$, so that
\details{
\begin{align*}
 \mathfrak{T}_3^a &\le \Ch\left( \frac{\sqrt{\dx}}{2}+ \alpha\e^{t^+} \left(|x^+|+|\xi^+| + \frac{\dx}{2}\right) \right) \\
 \mathfrak{T}_3^b &\le \bM C_\Linf \sqrt{\dx} \\
 \mathfrak{T}_3^c&\le\Lb \left(\left|x^+-\xi^+\right| + \left\|\Jdtkp-\Jlkp\right\|_\infty \right) \sup_{B\left(0,\Linf\right)}|\Hcont|
 \\ &\le\Lb\left(\Linf\Ch +{C}_\Linf\right)\left(\left|x^+-\xi^+\right| + \left\|\Jdtkp-\Jlkp\right\|_\infty \right)
 \\&\le 10\left(\Linf \Ch+{C}_\Linf\right) \Lb \Lx(T) \sqrt{\dx} +\left(\Linf\Ch+{C}_\Linf\right)\Lb \left\|\Jdtkp-\Jlkp\right\|_\infty \\
 \mathfrak{T}_3^d&\le \Lb k|t^+-\tau^+| \left(\Linf\Ch+{C}_\Linf\right)
 \\& \le 2\Lb \Lt k\left(\Linf\Ch+{C}_\Linf\right) \sqrt{\dt}
\end{align*}
}
\begin{equation}
\label{eq:T3}
 \mathfrak{T}_3\le C^k_3 \left( \sqrt{\dx}+\sqrt{\dt}+ \left\|\Jdtkp-\Jlkp\right\|_\infty\right)+ \Ch \alpha\e^{t^+}\left(\left|x^+\right|+\left|\xi^+\right|\right),
\end{equation}
\details{
with
\[
 C^k_3=\max\left\{
 \begin{array}{l}
 \ds\Ch + \bM C_\Linf +10 \left(\Linf \Ch+{C}_\Linf\right) \Lb \Lx(T);\vspace{4pt}\\
 \ds2\Lb\Lt k \left(\Linf \Ch+{C}_\Linf\right) ;\vspace{4pt}
 \\ \ds \left(\Linf \Ch+{C}_\Linf\right)\Lb
 \end{array}
 \right\},
\]
}
where $C_3^k$ depends only on the constants arising in the assumptions, namely $\Ch$, $\bM$, $C_\Linf$, $\Lx(T)$, $\Lt$, and on $k$. We now come back to \eqref{eq:itispositive}, and gather \eqref{eq:T1}, \eqref{eq:T2} and \eqref{eq:T3}, so that
\details{
\begin{align*}
 \sqrt{\dt} - \alpha\frac{\e^{t^+}}{2} \left({x^+}^2+{\xi^+}^2 \right) - \sigma -\Ch^2\alpha\e^T +\Ch \alpha\e^{t^+}\left(\left|x^+\right|+\left|\xi^+\right|\right)& \\ + \left(C^k_2+ C^k_3\right) \left( \sqrt{\dx}+\sqrt{\dt}+ \left\|\Jdtkp-\Jlkp\right\|_\infty\right) & \ge 0,
\end{align*}
or, equivalently
}
\begin{align*}
 \sqrt{\dt} & + \left(C^k_2+ C^k_3\right) \left( \sqrt{\dx}+\sqrt{\dt}+ \left\|\Jdtkp-\Jlkp\right\|_\infty\right)
 \ge  \sigma +\alpha\Ch^2\left(\e^T-\e^{t^+}\right) \\&+\alpha\frac{\e^{t^+}}{2} \left(\left|x^+\right|^2-2\Ch  |x^+| +\Ch^2  \right)
 + \alpha \frac{\e^{t^+}}{2} \left( \left|\xi^+\right|^2-2\Ch \left|\xi^+\right|+\Ch^2 \right),
\end{align*}
which finally gives the conclusion, as
\begin{equation}
\label{eq:defsigma}
 \sqrt{\dt}  + \left(C^k_2+ C^k_3\right) \left( \sqrt{\dx}+\sqrt{\dt}+ \left\|\Jdtkp-\Jlkp\right\|_\infty\right)
 \ge  \sigma.
\end{equation}
Indeed, as $\dt$ and $\dx$ depend on each other with \eqref{eq:CFLsatisfier}, one can choose $\sigma=\sigma^+(\dt,k)$, with $\sigma^+(\dt,k)\longrightarrow_{\dt\to 0} 0$ when $k$ is fixed, such that the inequality \eqref{eq:defsigma} is a contradiction. \details{For instance, twice the left-hand side of \eqref{eq:defsigma} is a good choice for $\sigma$.} With this choice $\sigma^+(\dt,k)$, the assumption $\tau^+>0$ cannot hold, which yields the conclusion $\tau^+=0$. The estimate \eqref{eq:estimate_tplus} is then a straightforward consequence of \eqref{eq:identclose}, and the inequality \eqref{eq:estimate_psip} comes from the expression of $\psip$ in \eqref{eq:defpsip}. Indeed,
\details{the following estimates holds true,
\[
 \psip\left(t^+,x^+,\tau^+,\xi^+\right) \ge \wk\left(t^+,x^+\right) - \wdtk\left(\tau^+,\xi^+\right),
\]
that is,
}
since $\tau^+=0$, and $\wk\left(0,\cdot\right)=\wdtk\left(0,\cdot\right)$, the following estimate holds true
\[
 \psip\left(t^+,x^+,\tau^+,\xi^+\right) \ge \wk\left(t^+,x^+\right) -\wk\left(0,x^+\right)+\wdtk\left(0,x^+\right)- \wdtk\left(0,\xi^+\right).
\]
The Lipschitz regularity of $\wk$ and $\wdtk$ stated in Lemma \ref{lem:propofregularizedsolutions}, and \eqref{eq:identclose} finally gives \eqref{eq:estimate_psip}.
\end{proof}

We may now show the convergence of the solution of scheme \eqref{eq:scheme} towards the viscosity solution of \eqref{eq:HJ}. It is a consequence of
\begin{equation}
\label{eq:conclusion_inequality}
\vl\le\ul\le\wl,
\end{equation}
 where $\vl$,  and $\wl$ are defined in \eqref{eq:defvl}-\eqref{eq:defwl} and $\ul$ in Lemma \ref{lem:extraction}.
 As in the proof of Lemma \ref{lem:psi}, let us detail only the upper bound here, as the bound from below is very similar. For all $(t,x)\in[0,T]\times\R$, one has, using \eqref{eq:estimate_psip}, and the fact that $\left(t^+,x^+, \tau^+,\xi^+\right)$ is a minimum point of $\psip$,
\[
-2\Lt^2\sqrt{\dt}-10\LxT^2\sqrt{\dx}\le \psip\left(t^+,x^+,\tau^+,\xi^+\right)\leq \psip\left(t,x,t,x\right),
\]
that is,
\[
-2\Lt^2\sqrt{\dt}-10\LxT^2\sqrt{\dx}\le \wk\left(t,x\right)-\wdtk\left(t,x\right)+\alpha e^t x^2+\frac{\alpha}{T-t}+\left(\sigma^+(\dt,k)+\Ch^2\alpha e^T\right)t.
\]
Let now  $\alpha\to0$ in previous inequality, it gives
\details{
\[
\wdtk\left(t,x\right)-2\Lt^2\sqrt{\dt}-10\LxT^2\sqrt{\dx}-\sigma^+\left(\dt,k\right) t\le \wk\left(t,x\right),
\]
and use the estimate $\udt\leq\wdtk$ stated in Lemma \ref{lem:propofregularizedsolutions}, so that
}
\[
\udt\left(t,x\right)-2\Lt^2\sqrt{\dt}-10\LxT^2\sqrt{\dx}-\sigma^+\left(\dt,k\right) t\le \wk\left(t,x\right),
\]
where we also used the estimate $\udt\leq\wdtk$ stated in Lemma \ref{lem:propofregularizedsolutions}.
Let now $\dt\to0$, with $k>0$ fixed. By definition of $\dx$, $\sigma^+\left(\dt,k\right)$ and $\udt$ the limit of the right-hand side is $\ul\left(t,x\right)$. Therefore, for all $k>0$, $t\in[0,T]$, and $x\in\R$,
\[
\ul\left(t,x\right)\le\wk\left(t,x\right).
\]
One can finally let $k\to\infty$ in the previous inequality, so that for all $t\in[0,T]$ and for all $x\in\R$, $\ul\left(t,x\right)\le\wl\left(t,x\right)$, which is the second estimate in \eqref{eq:conclusion_inequality}.

To conclude, recall that $\vl=\wl$, thanks to Remark \ref{rmq:ToConclude}, and  that $\ul$, $\vl$ and $\wl$ enjoy Lipschitz regularity. Then, inequalities in \eqref{eq:conclusion_inequality} are in fact equalities, and $\ul$ is the viscosity solution of \eqref{eq:defvl} (or \eqref{eq:defwl}, equivalently). In addition, for all $t\in[0,T]$, $\min \ul\left(t,\cdot\right)=0$, as proved in Lemma \ref{lem:extraction}, so that Theorem \ref{thm:CalvezLam2020} yields that $\left(\ul,\Jl\right)$ is the viscosity solution of \eqref{eq:HJ}. Eventually, as Theorem \ref{thm:CalvezLam2020} also states the uniqueness of the viscosity solution of \eqref{eq:HJ}, the restriction \emph{up to a subsequence} of Remark \ref{rmq:Subsequence}, can then be removed, so that Prop. \ref{thm:limitconv} holds.

\subsection{Relaxation of the CFL condition}
\label{sec:cflgest}

The convergence of scheme \eqref{eq:scheme} is proved under strong assumptions on the relative size of the discretization parameters $\dt$ and $\dx$. Indeed, the CFL condition \eqref{eq:CFLsatisfier} must be satisfied, but the definition of $\Linf$ in \eqref{eq:Linf} includes a security margin required by the proof of the convergence. It makes the CFL very restrictive, and refining time meshes, so that \eqref{eq:CFLsatisfier} holds, leads to very costly numerical computations. This is especially accurate when the Hamiltonian $\Hcont$ contains an exponential, as in the example \eqref{eq:discussion_H}.

In practice, considering the CFL with the larger slope that appears in the computations, instead of $\Linf$, is enough to ensure the stability of the computations. One can even prove that the scheme converges, using this relaxed CFL condition.
The key idea  is to linearize $\Hcont$ and $\dHe$ for slopes stronger than what appears in the simulations. The process is the following:
\begin{enumerate}
\item \label{it:CFL_i} Make an educated guess of the steepest slope $S$ that could appear. In the worst-case scenario, $\LxT$ suits, but smaller values are targeted.
\item Let, for all $p\in\R$:
\[
\tilde{\Hcont}(p)=\begin{cases}
\Hcont(S)+(p-S)\Hcont'(S)&\text{if } p>S\\
\Hcont(p)&\text{if } -S\leq p\leq S\\
\Hcont(-S)+(p-S)\Hcont'(-S)&\text{if } p<-S.
\end{cases}
\]
Note that if $S$ is greater than the stronger slope of the solution $u$ of \eqref{eq:HJ}, the values outside of $[-S,S]$ are never reached by $p=\nabla_x u$. Hence, replacing $\Hcont$ by $\tilde{\Hcont}$ in \eqref{eq:HJ} does not change the equation.
Then, linearize $\dHe$ for slopes stronger than $S$. Let for all $(p,q)\in\R^2$:
\[
\tilde{\dHe}(p,q)=\begin{cases}
\dHe(p,q)&\text{if } -S\leq p\leq S,\;-S\leq q\leq S\\
\dHe(p,S) + (q-S) \Hcont'(S)  & \text{if } -S\le p\le S, \;  q> S \\
 \dHe(p,-S) + (q+S) \Hcont'(-S) &  \text{if } -S\le p\le S, \;  q<- S \\
\dHe(S,q)+(p-S)\Hcont'(S)&\text{if } p>S,\;-S\leq q\leq S\\
\dHe(-S,q)+(p+S)\Hcont'(-S)&\text{if } p<-S,\;-S\leq q\leq S\\
\dHe(S,S)+(p-S)\Hcont'(S)+(q-S)\Hcont'(S)&\text{if } p>S,\;q> S\\
\dHe(S,-S)+(p-S)\Hcont'(S)+(q+S)\Hcont'(-S)&\text{if } p>S,\;q<-S\\
\dHe(-S,S)+(p+S)\Hcont'(-S)+(q-S)\Hcont'(S)&\text{if } p<-S,\;q> S\\
\dHe(-S,-S)+(p+S)\Hcont'(-S)+(q+S)\Hcont'(-S)&\text{if } p<-S,\;q<-S.
\end{cases}
\]
Note that, if $S$ is large enough, replacing $\dHe$ by $\tilde{\dHe}$ does not change the scheme. However, in any case, $\tilde{\dHe}$ satisfies the assumptions required for $\dHe$. As the slope of $\tilde{\dHe}$ does not increase for $|p|\ge S$ or $|q|\ge S$, no security margin is to be added in the proof of Section \ref{sec:ident}.
\item Choose $\dx$ according the CFL associated to $S$, for instance
\begin{equation}\label{eq:CFL3}
\dt \CHL(S)=\frac{\dx}{2},
\end{equation}
and $\eta=\sqrt{\dx}$ as in \eqref{eq:etachoice}.
\item Compute the solutions of the new system, and check that for all $i$ and all $n$,  $\left|u_i^n-u_{i-1}^n\right|\le S\dx$. If this does not hold, try again from step \ref{it:CFL_i} with a more educated guess. For instance, if the first time with larger slopes is $0<t<T$, one could try $
S'=\Lin+(S-\Lin)T/t$.
\end{enumerate}

\section{Numerical tests and discussion}
\label{sec:numericalsims}

In this section, we present some numerical tests to highlight the properties of schemes \eqref{eq:scheme} and \eqref{eq:scheme_ep}. Unless other choices are specified, the following birth rate, and net growth rate parameters will be used
\begin{align}
 \label{eq:tests_b}
 b(x,I)& = \frac{8-(x+1)^2}{1+(x+1)^2} \frac{1}{2+I},
 \\
 \label{eq:tests_R}
 \Rcont(t,x,I)&= - \frac{(x+3)^2}{1+(x+3)^2} - \frac{x^2}{1+x^2}(t+1) I.
\end{align}
Two initial data $\uin$ are considered, depending on the tests. The first one will be denoted $\uinconv$. It is convex, and defined for all $x\in\R$ by,
\begin{equation}
 \label{eq:uin_conv}
 \uinconv = \sqrt{1+x^2}-1.
\end{equation}
The second one has two local minima, is hence not convex, and will be denoted $\uinnotconv$. It is given for all $x\in\R$ by,
\begin{equation}
 \label{eq:uin_notconv}
 \uinnotconv(x)= \left( \left(1+\left(x+3\right)^2\right)^{1/4} -1\right)\;\left( \left(2+\left(x-2\right)^2\right)^{1/4} -1\right).
\end{equation}

In all tests, the $x$ domain is $[-10,10]$, but the final time $T$ varies and will always be specified. Regarding the discretization, $\dt$ is defined specifically for all tests, and $\dx$ is determined as a function of $\dt$ using the relation \eqref{eq:CFL3}. The value of $S$ will always be indicated, and an order of magnitude of $\dx$ will be given for information. Among the list of discretizations of $\Hcont$ proposed in Section \ref{sec:schemes}, we focus on \eqref{eq:HP1} and \eqref{eq:HCSS}. We indeed favored, \eqref{eq:HP1} as it  naturally appears as the limit of the AP scheme \eqref{eq:scheme_ep}. It is in the convex setting but not in the flat setting, and \eqref{eq:HCSS} is its modification to a scheme in the flat setting. The choice will be made clear for each test.

\subsection{Properties of \eqref{eq:scheme}}
\label{sec:TestsNums_MonotonyI}

We investigate the properties of scheme \eqref{eq:scheme}, starting with its convergence. It is established in Proposition \ref{thm:limitconv}, but the compactness argument that is used in the proof does not yield any convergence rate. We test it numerically  in what follows. Contrary to the particular case of the quadratic Hamiltonian $\Hcont(p)=p^2$ that has been tested in \cite{CalvezHivertYoldas2022}, no analytical solution of \eqref{eq:HJ} is available when $\Hcont$ is defined by \eqref{eq:discussion_H}. We hence define a reference solution for scheme \eqref{eq:scheme}, using the same scheme with a refined grid. Let $T=0.5$, $S=2$, and fix $b$, $\Rcont$ and a convex initial data as in \eqref{eq:tests_b}, \eqref{eq:tests_R} and \eqref{eq:uin_conv}. The reference solution $(\bu_{\mathrm{red}}, \bI_{\mathrm{ref}})$ is computed with $\dt=4\cdot10^{-5}$ (which yields $\dx \simeq 2.4\cdot10^{-3}$), and it is compared to solutions $(\bu_\dt,\bI_\dt)$ computed for a sequence of $\dt$ varying from $10^{-2}$ to $10^{-4}$. The corresponding $\dx$ hence vary approximately from $0.6$ to $6\cdot 10^{-3}$. Both \eqref{eq:HP1} and \eqref{eq:HCSS} are tested, and the convergence errors
\begin{equation}
\label{eq:Lschemeconv_err}
 \left\| \bu^{N_T^{\mathrm{ref}}}_{\mathrm{ref}}- \bu^{N_T^\dt}_\dt\right\|_\infty, \;\;\text{and}\;\;  \left\| \bI_{\mathrm{ref}}- \bI_\dt\right\|_{L^1(0,T)},
\end{equation}
are displayed as a function of $\dt$ in logarithmic scale in Figure \ref{fig:Lschemeconv}. These errors are defined using suitable norms regarding the expected regularity of the solution of $\eqref{eq:HJ}$, see \cite{CalvezHivertYoldas2022} for details.  This suggests that the numerical order of scheme \ref{eq:scheme} is $1$, both for \eqref{eq:HP1} and \eqref{eq:HCSS}.

%#########################################################
%Figure Lschemeconv
%#########################################################
%\tikzexternaldisable
\begin{figure}[!ht]
 \centering
 \begin{tabular}{@{}c@{}c@{}}
 \includegraphics[width=0.45\textwidth]{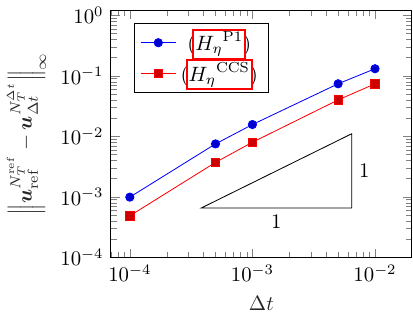} &
 \includegraphics[width=0.42\textwidth]{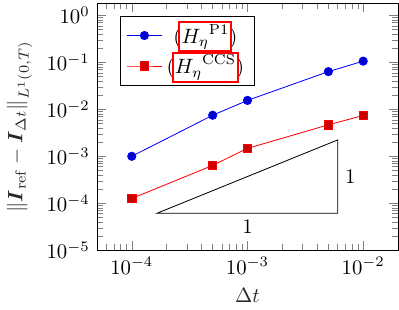}
 \end{tabular}
% \centering
% \begin{tikzpicture}
% \begin{axis}[
% width=0.4\textwidth,
% at={(0.00005,0.0005)},
% xmin=7*10^(-5),
% xmax=0.02,
% xmode=log,
% xminorticks=true,
% xlabel style={font=\color{white!15!black}},
% xlabel={$\dt$},
% ymin=10^(-4),
% ymax=1.2,
% ymode=log,
% yminorticks=true,
% ylabel style={font=\color{white!15!black}},
% ylabel={$\left\| \bu^{N_T^{\mathrm{ref}}}_{\mathrm{ref}}- \bu^{N_T^\dt}_\dt\right\|_\infty$},
% legend style={at={(0.3,0.95)},anchor=north},
% %legend pos = outer south,
% ]
% \addplot table [x =Column1, y=Column2,col sep= comma]{Lschemeconv_errU.csv};
% \addlegendentry{\eqref{eq:HP1}}
%
% \addplot table [x =Column1, y=Column2,col sep= comma]{Lschemeconv_CSS_errU.csv};
% \addlegendentry{\eqref{eq:HCSS}}
%
% \logLogSlopeTriangle{.8}{.5}{.2}{1}{black};
% \end{axis}
% \end{tikzpicture}
% \hspace{1cm}
% \begin{tikzpicture}
% \begin{axis}[
% width=0.4\textwidth,
% at={(0.00005,0.0005)},
% xmin=5*10^(-5),
% xmax=0.02,
% xmode=log,
% xminorticks=true,
% xlabel style={font=\color{white!15!black}},
% xlabel={$\dt$},
% ymin=10^(-5),
% ymax=1.8,
% ymode=log,
% yminorticks=true,
% ylabel style={font=\color{white!15!black}},
% ylabel={$\left\| \bI_{\mathrm{ref}}- \bI_\dt\right\|_{L^1(0,T)}$},
% legend style={at={(0.3,0.95)},anchor=north},
% ]
% \addplot table [x =Column1, y=Column2,col sep= comma]{Lschemeconv_errI.csv};
% \addlegendentry{\eqref{eq:HP1}}
%
% \addplot table [x =Column1, y=Column2,col sep= comma]{Lschemeconv_CSS_errI.csv};
% \addlegendentry{\eqref{eq:HCSS}}
%
% \logLogSlopeTriangle{.8}{.6}{.15}{1}{black};
% \end{axis}
% \end{tikzpicture}
\caption{
Scheme \eqref{eq:scheme} with $T=0.5$, $\dt_{\mathrm{ref}} = 4\cdot 10^{-5}$, $S=2$, $b$, $\Rcont$ and $\uinconv$ defined in \eqref{eq:tests_b}, \eqref{eq:tests_b} and \eqref{eq:uin_conv}. Convergence test for the solution of \eqref{eq:scheme} with \eqref{eq:HP1} and \eqref{eq:HCSS}.
 Left: Error \eqref{eq:Lschemeconv_err} on the component $\bu$,  as a function of $\dt$ (log scale). Right: Error \eqref{eq:Lschemeconv_err} on the component $\bI$,  as a function of $\dt$ (log scale).
}
\label{fig:Lschemeconv}
\end{figure}
%#########################################################

Focus now on the qualitative properties of scheme \eqref{eq:scheme}, and especially on the properties of $\bI$, or equivalently of $I_\dt$ defined by \eqref{eq:defJdt}. When the dependency in $t$ of $\Rcont$ is Lipschitz,  \eqref{eq:I_BVbound} yields that
 the constraint $I$ in \eqref{eq:HJ} can decrease, but only continuously. It is indeed the limit of $I_\dt$ when $\dt\to 0$, thanks to Proposition \ref{thm:limitconv}, and its finite differences are bounded from below.
Roughly speaking, this means that $I$ has only increasing jumps. This property is illustrated on the left-hand side of Fig. \ref{fig:decayandjump}, where $I_\dt$ is displayed for as a function of $t\in[0,1/2]$. One can notice that $I_\dt$ starts with a smooth decay, before a increasing jump. The right-hand side of Fig. \ref{fig:decayandjump} presents $\bu$, or equivalently $u_\dt$ defined by \eqref{eq:defudt}, as a function of $t$ and $x$. Remark that the location of the minimum of $u_\dt$ jumps when $I_\dt$ jumps. These figures have been obtained with discretization \eqref{eq:HP1}, parameters $b$ and $\Rcont$ defined by \eqref{eq:tests_b} and \eqref{eq:tests_R}, the initial data $\uinnotconv$ as in \eqref{eq:uin_notconv}, $\dt=2\cdot10^{-5}$ (hence $\dx\simeq 1.2\cdot 10^{-3}$).

%#########################################################
%Figure decayandjump
%#########################################################
%\tikzexternalenable
%\tikzsetnextfilename{decayandjump}
\begin{figure}[!ht]
% \centering
% \begin{tikzpicture}
% \begin{axis}[
% width=0.4\textwidth,
% at={(0,0)},
% xmin=0,
% xmax=0.52,
% xminorticks=true,
% xlabel style={font=\color{white!15!black}},
% xlabel={$t$},
% ymin=1,
% yminorticks=true,
% ylabel style={font=\color{white!15!black}},
% ylabel={$I_\dt(t)$},
% legend style={at={(0.3,0.95)},anchor=north},
% ]
% \addplot[line width=1pt, solid,color=blue] table [x =Column1, y=Column2, each nth point = 100,col sep= comma]{decayandjump_pop.csv};
%
% \end{axis}
% \end{tikzpicture}
% \hspace{1cm}
% \includegraphics[width=0.5\textwidth]{}

\centering
 \begin{tabular}{@{}c@{}c@{}}
 \includegraphics[width=0.4\textwidth]{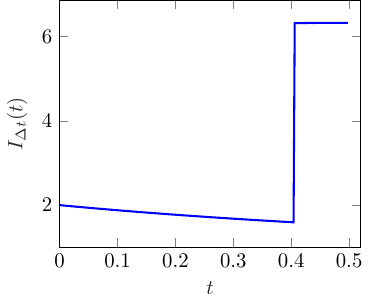} &
 \includegraphics[width=0.5\textwidth]{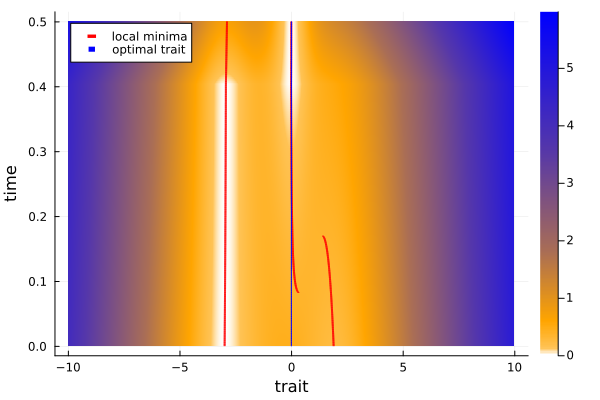}
 \end{tabular}

\caption{
Scheme \eqref{eq:scheme} with $T=0.5$, $\dt = 2\cdot 10^{-5}$, $S=2$, $b$, $\Rcont$ and $\uinnotconv$ defined in \eqref{eq:tests_b}, \eqref{eq:tests_R} and \eqref{eq:uin_notconv}. Qualitative properties of \eqref{eq:scheme} with \eqref{eq:HP1}.
Left: $I_\dt$ as a function of $t$.
Right: $u_\dt$ as a function of $x$ (trait) and $t$ (time). The solid lines represent respectively the local minima of $u_\dt$, and the optimal trait. Light colors emphasize small values of $u_\dt$.
}
\label{fig:decayandjump}
\end{figure}
%#########################################################

When $\Rcont$ does not depend on $t$, it has been proved in \cite{CalvezLam2020}, that $I$ is non-decreasing. It is worth noticing that this property is numerically preserved only in the flat setting.
%In the convex setting, $I_\dt$ may lack the non-decreasing property.
Fig.  \ref{fig:decaywhenincrease} highlights this behavior. We considered once again $b$ defined in \eqref{eq:tests_b} and the initial data $\uinnotconv$  \eqref{eq:uin_notconv}. We used $\Rcont(0,x,I)$ instead of $\Rcont$ defined in \eqref{eq:tests_R}, to make it independent of $t$. We represented $I_\dt$ as a function of $t$, when it is computed with a scheme in the flat setting as \eqref{eq:HCSS}, or with a scheme in the convex setting as \eqref{eq:HP1}. The left-hand side of Fig. \ref{fig:decaywhenincrease} is computed with a very coarse grid, $\dt=10^{-2}$ (that is, $\dx\simeq 0.6$). The numeric-induced decay of $I_\dt$ in the convex setting \eqref{eq:HP1} is clearly visible, while $I_\dt$ is non-decreasing in the flat setting \eqref{eq:HCSS}. Of course, this phenomenon is only due to numerical approximations, and the decay tends to $0$ when $\dt\to 0$, as the limit $I$ of $I_\dt$ is non-decreasing. The right-hand side of Fig. \ref{fig:decaywhenincrease} shows that the artificial decay of \eqref{eq:HP1} is much smaller when $\dt=10^{-4}$ (and $\dx\simeq 6\cdot10^{-3}$).

%#########################################################
%Figure decaywhenincrease
%#########################################################
%\tikzexternalenable
%\tikzsetnextfilename{decaywhenincrease}
\begin{figure}[!ht]
% \centering
% \begin{tikzpicture}
% \begin{axis}[
% width=0.4\textwidth,
% at={(0,0)},
% xmin=0,
% xmax=1.5,
% xminorticks=true,
% xlabel style={font=\color{white!15!black}},
% xlabel={$t$},
% ymin=1,
% ymax=6.8,
% yminorticks=true,
% ylabel style={font=\color{white!15!black}},
% ylabel={$I_\dt(t)$},
% legend style={at={(0.7,0.05)},anchor=south},
% title = {$\dt=10^{-2}$, $\dx\simeq 0.6$},
% ]
% \addplot[line width=1pt, solid,color=blue] table [x =Column1, y=Column2,col sep= comma]{decaywhenincrease_CCS_coarse_pop.csv};
% \addlegendentry{\eqref{eq:HCSS}}
%
% \addplot[line width=1pt, dashed,color=red] table [x =Column1, y=Column2,col sep= comma]{decaywhenincrease_P1_coarse_pop.csv};
% \addlegendentry{\eqref{eq:HP1}}
%
% \end{axis}
% \end{tikzpicture}
% \hspace{1cm}
% \begin{tikzpicture}
% \begin{axis}[
% width=0.4\textwidth,
% at={(0,0)},
% xmin=0,
% xmax=1.5,
% xminorticks=true,
% xlabel style={font=\color{white!15!black}},
% xlabel={$t$},
% ymin=1,
% ymax=6.8,
% yminorticks=true,
% ylabel style={font=\color{white!15!black}},
% ylabel={$I_\dt(t)$},
% legend style={at={(0.7,0.25)},anchor=south},
% title = {$\dt=10^{-4}$, $\dx\simeq 6\cdot10^{-3}$},
% ]
% \addplot[line width=1pt, solid,color=blue] table [x =Column1, y=Column2, each nth point=50, col sep= comma]{decaywhenincrease_CCS_refined_pop.csv};
% \addlegendentry{\eqref{eq:HCSS}}
%
%
% \addplot[line width=1pt, dashed,color=red] table [x =Column1, y=Column2, each nth point=50, col sep= comma]{decaywhenincrease_P1_refined_pop.csv};
% \addlegendentry{\eqref{eq:HP1}}
%
% \end{axis}
% \end{tikzpicture}

\centering
 \begin{tabular}{@{}c@{}c@{}}
 \includegraphics[width=0.45\textwidth]{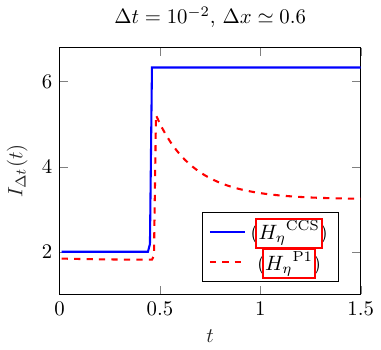} &
 \includegraphics[width=0.45\textwidth]{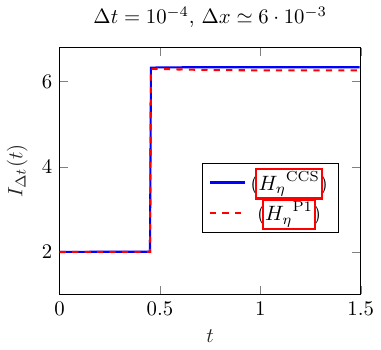}
 \end{tabular}

\caption{
Scheme \eqref{eq:scheme} with $T=1.5$, $S=2$, $\Rcont(0,x,I)$ defined in \eqref{eq:tests_R},  and $b$ and $\uinnotconv$ defined in \eqref{eq:tests_b} and \eqref{eq:uin_notconv}.  $I_\dt$ as a function of $t$.
Left: with $\dt=10^{-2}$.
Right: with $\dt=10^{-4}$.
}
\label{fig:decaywhenincrease}
\end{figure}
%#########################################################

\subsection{Asymptotic-preserving property of scheme \eqref{eq:scheme_ep}}

Consider now the scheme \eqref{eq:scheme_ep}. As it is stated in Proposition \ref{prop:Scheme_ep_AP}, it enjoys the AP property. Indeed,  its solution converges to the solution of scheme \eqref{eq:scheme} when $\ep$ goes to $0$ with fixed discretization. The discretization being fixed, denote $(\bv^\ep,\bJ^\ep)$ the solution of scheme \eqref{eq:scheme_ep} for a given $\ep>0$,  $(\bu,\bI)$ the solution of scheme \eqref{eq:scheme}. Thanks to the AP property, $(\bv^\ep, \bJ^\ep)$ go to $(\bu,\bI)$ when $\ep\to 0$. This property is tested in Fig. \ref{fig:APness}, where the errors
\begin{equation}
 \label{eq:tests_AP_Err}
 \left\|  \bv^\ep-\bu\right\|_\infty, \;\;\left\|  \bJ^\ep-\bI\right\|_{L^1(0,T)},
\end{equation}
are displayed as functions of $\ep$, in logarithmic scale. Here again, the functional space have been chosen according to the expected regularity of the solutions, see \cite{CalvezHivertYoldas2022}. As expected, these errors decrease with $\ep$, a saturation for the small errors in the component $\bJ_\ep$ excepted. Moreover, according to this test the numerical order of convergence of the solution of \eqref{eq:P_ep}  to the solution of \eqref{eq:HJ} with \eqref{eq:discussion_H} is $1$. The schemes \eqref{eq:scheme} and \eqref{eq:scheme_ep} are run with  $b$, $\Rcont$ and $\uinnotconv$ defined in \eqref{eq:tests_b}, \eqref{eq:tests_R} and \eqref{eq:uin_notconv}, with $T=0.5$, $S=2$ and $\dt=10^{-3}$ (hence, $\dx\simeq 6\cdot10^{-2}$). Scheme \eqref{eq:scheme} is implemented with discretization \eqref{eq:HP1}.

%#########################################################
%Figure APness
%#########################################################
%\tikzexternaldisable
%\tikzsetnextfilename{APness}
\begin{figure}[!ht]
\centering

% \begin{tikzpicture}
% \begin{axis}[
% width=0.4\textwidth,
% at={(0.00005,0.0005)},
% xmin=10^(-16),
% xmax=2,
% xmode=log,
% xminorticks=true,
% xlabel style={font=\color{white!15!black}},
% xlabel={$\ep$},
% ymin=10^(-15),
% ymax=5,
% ymode=log,
% yminorticks=true,
% ylabel style={font=\color{white!15!black}},
% ylabel={$\left\|  \bJ^\ep-\bI\right\|_{L^1(0,T)}$},
% legend style={at={(0.3,0.95)},anchor=north},
% ]
% \addplot table [x =Column1, y=Column2,col sep= comma]{APness_errI.csv};
%
%
% \logLogSlopeTriangle{.8}{.5}{.25}{1}{black};
% \end{axis}
% \end{tikzpicture}
% \hspace{1cm}
% \begin{tikzpicture}
% \begin{axis}[
% width=0.4\textwidth,
% at={(0.00005,0.0005)},
% xmin=10^(-16),
% xmax=2,
% xmode=log,
% xminorticks=true,
% xlabel style={font=\color{white!15!black}},
% xlabel={$\ep$},
% ymin=10^(-15),
% ymax=5,
% ymode=log,
% yminorticks=true,
% ylabel style={font=\color{white!15!black}},
% ylabel={$\left\|  \bv^\ep-\bu\right\|_\infty$},
% legend style={at={(0.3,0.95)},anchor=north},
% ]
% \addplot table [x =Column1, y=Column2,col sep= comma]{APness_errU.csv};
%
%
% \logLogSlopeTriangle{.8}{.5}{.25}{1}{black};
% \end{axis}
% \end{tikzpicture}
% \hspace{1cm}

\centering
 \begin{tabular}{@{}c@{}c@{}}
 \includegraphics[width=0.45\textwidth]{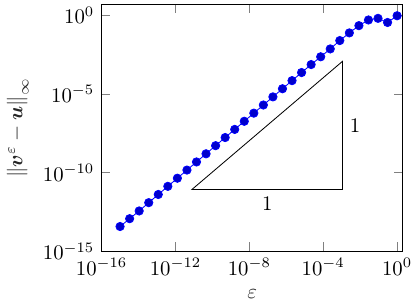} &
 \includegraphics[width=0.45\textwidth]{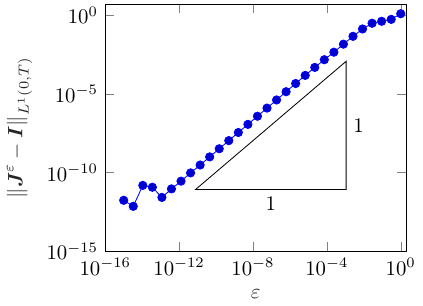}
 \end{tabular}

\caption{
With $T=0.5$, $\dt=10^{-3}$, $S=2$, $b$, $\Rcont$ and $\uinnotconv$ defined in \eqref{eq:tests_b}, \eqref{eq:tests_R} and \eqref{eq:uin_notconv}. AP property of scheme \eqref{eq:scheme_ep}. Scheme \eqref{eq:scheme} with \eqref{eq:HP1}. Left: Error \eqref{eq:tests_AP_Err} on the component $\bv$ as a function of $\ep$ (log scale). Right: Error \eqref{eq:tests_AP_Err} on the component $\bJ$ as a function of $\ep$ (log scale).
}
\label{fig:APness}
\end{figure}
%#########################################################

A stronger property of AP schemes is the fact that they can be Uniformly Accurate (UA). Indeed, in the numerical test above, the discretization was fixed, because AP scheme may present an order degeneracy for some values of $\ep$, typically when their order of magnitude is comparable to the discretization parameters. On the contrary, the UA property certifies that the precision is independent of the scaling parameter $\ep$. The proof of this property is far beyond the scope of this paper, as the estimated order of convergence of the limit scheme \eqref{eq:scheme} is not even theoretically determined. It can however be tested numerically. As for convergence numerical tests, a reference solution must be computed to perform the test. It is done with $b$, $\Rcont$, and $\uinnotconv$ defined in \eqref{eq:tests_b}, \eqref{eq:tests_R} and \eqref{eq:uin_notconv}, and $T=0.5$, $S=2$. For all $\ep>0$,  the reference solution is  $(\bv^\ep_{\dt_{\mathrm{ref}}}, \bJ^\ep_{\dt_{\mathrm{ref}}})$, computed with \eqref{eq:scheme_ep}, and a refined grid $\dt_{\mathrm{ref}}=2\cdot10^{-5}$ (that is $\dx_{\mathrm{ref}}\simeq 1.2\cdot10^{-3}$). Then, for $\ep$ varying from $10^{-16}$ to $1$, and for $\dt$ varying from $10^{-4}$ to $10^{-2}$ ($\dx$ from $6\cdot10^{-3}$ to $0.6$), compute the $(\dt,\ep)$-dependent errors,
\begin{equation}
 \label{eq:tests_UA_Err}
 \left\| \bv^\ep_\dt - \bv^\ep_{\dt_{\mathrm{ref}}}\right\|_\infty,
 \left\| \bJ^\ep_\dt - \bJ^\ep_{\dt_{\mathrm{ref}}}\right\|_{L^1(0,T)}.
 \end{equation}
They are displayed in Fig. \ref{fig:UA}, each value of $\dt$ being represented in logarithmic scale as a function of $\ep$. The component in $\bv$ of the error is on the left-hand side of the figure, and the component in $\bJ$ is on the right-hand side. Remark that the supremum in $\ep$ of these errors decreases with $\dt$. It is in fact of the order of magnitude of $\dx$. This suggests that the scheme \eqref{eq:scheme_ep} enjoys the UA property, with uniform order of convergence $1$.

%#########################################################
%Figure UA
%#########################################################

%\tikzsetnextfilename{UA}
\begin{figure}[!ht]

 \centering
  \begin{tabular}{@{}c@{}c@{}}
  \includegraphics[width=0.45\textwidth]{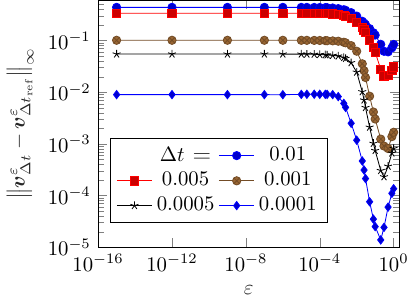} &
  \includegraphics[width=0.45\textwidth]{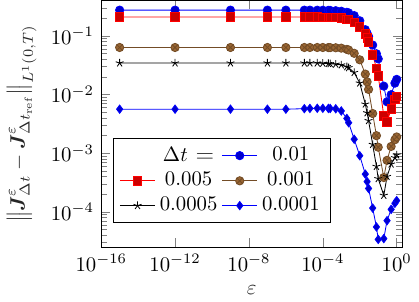}
  \end{tabular}

\caption{
With $T=0.5$, $\dt=10^{-3}$, $S=2$, $b$, $\Rcont$ and $\uinnotconv$ defined in \eqref{eq:tests_b}, \eqref{eq:tests_R} and \eqref{eq:uin_notconv}. UA property of scheme \eqref{eq:scheme_ep}.  Left: For a range of $\dt$, error \eqref{eq:tests_UA_Err} on the component $\bv$ as a function of $\ep$ (log scale). Right: For a range of $\dt$, error \eqref{eq:tests_UA_Err} on the component $\bJ$ as a function of $\ep$ (log scale).
}
\label{fig:UA}
\end{figure}
%#########################################################

\subsection{Scheme \eqref{eq:scheme_ep} when the population vanishes}
\label{sec:TestsNums_AP_CasLimite}

Determining theoretically the asymptotic behaviour of \eqref{eq:P_ep},
for given $b$ and $\Rcont$ not necessarily satisfying nice hypothesis,
is -up to our knowledge- an open question.
It can however be conjectured using \eqref{eq:scheme_ep}, as it is illustrated in this section. In this section, the following birth rate will be considered,
\begin{equation}
 \label{eq:tests_oscillatory_b}
 b(x,I)=2,
\end{equation}
with the initial data $\uinconv$ defined in \eqref{eq:uin_conv}. All the test cases are run with $T=6$, $\dt=2\cdot10^{-5}$ (that is, $\dx\simeq 2.2\cdot10^{-2}$), and $S=3$. We focus on changing environments, as it is done in \cite{FigueroaIglesiasMirrahimi2018, FigueroaIglesiasMirrahimi2021, CostaEtchegarayMirrahimi2021} for the parabolic case.

Consider first a periodic varying environment, with
\begin{equation}
 \label{eq:tests_oscillatory_R_smooth}
 R(t,x,I)= - (x- 3\sin(\alpha t))^2- I.
\end{equation}
Note that $\Rcont$ does not satisfy hypothesis \eqref{hyp:R_ImIM}. However, with $\alpha=0.5$, that is a slowly-varying environment, scheme \eqref{eq:scheme_ep} suggests that the population does not extincts, and that the asymptotic behavior of \eqref{eq:P_ep} is \eqref{eq:HJ}, see Fig. \ref{fig:oscillatory}. The left-hand side of Fig. \ref{fig:oscillatory} displays $I_\dt$ (or, equivalently, $\bI$) computed with \eqref{eq:scheme}-\eqref{eq:HP1}, and $\bJ$ computed with \eqref{eq:scheme_ep} and $\ep=10^{-8}$. Note that both coincide, and that they are bounded from above and below by positive constants, meaning that the total size of the population does not vanish nor grow uncontrolled. The function $u_\dt$ (or the sequence $\bu$) defined by \eqref{eq:scheme}-\eqref{eq:HP1} is on the right-hand side of Fig. \ref{fig:oscillatory} ($\bv$ defined by \eqref{eq:scheme_ep} is not represented, as it cannot be distinguished from the one for $u_\dt$). The optimal trait and the minimum of $u_\dt$ are represented by solid lines on the figure. Note that the local minimum of $u_\dt$ follows the optimal trait with a delay, as expected thanks to \cite{FigueroaIglesiasMirrahimi2021}. Fig. \ref{fig:oscillatory} has been obtained with $T=6$, $\dt=2\cdot10^{-5}$ and $S=3$.

%#########################################################
%Figure oscillatory + oscillatory_eps
%#########################################################

%\tikzsetnextfilename{oscillatory}
\begin{figure}[!ht]
% \centering
% \begin{tikzpicture}
% \begin{axis}[
% width=0.4\textwidth,
% at={(0,0)},
% xmin=0,
% xmax=6.1,
% xminorticks=true,
% xlabel style={font=\color{white!15!black}},
% xlabel={$t$},
% ymin=1.5,
% ymax=2.05,
% yminorticks=true,
% ylabel style={font=\color{white!15!black}},
% ylabel={$\bJ$, $I_\dt$},
% legend style={at={(0.6,0.4)},anchor=north},
% ]
% \addplot[line width=1pt, solid,color=blue] table [x =Column1, y=Column2, each nth point = 100,col sep= comma]{oscillatory_pop.csv};
% \addlegendentry{$I_\dt$}
%
% \addplot[line width=1pt, dashed,color=red] table [x =Column1, y=Column2, each nth point = 100,col sep= comma]{oscillatory_eps_pop.csv};
% \addlegendentry{$\bJ$}
%
% \end{axis}
% \end{tikzpicture}
% \hspace{1cm}
% \includegraphics[width=0.5\textwidth]{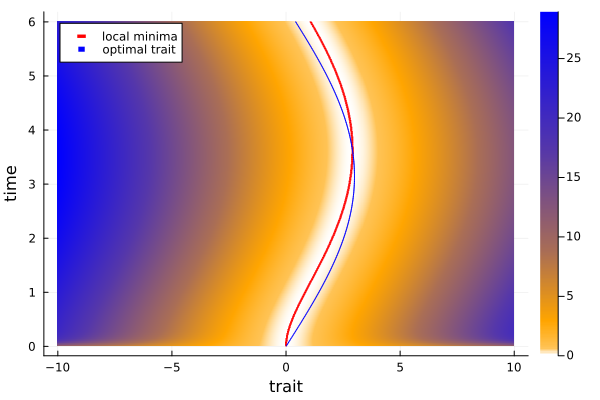}
%

\centering
  \begin{tabular}{@{}c@{}c@{}}
  \includegraphics[width=0.4\textwidth]{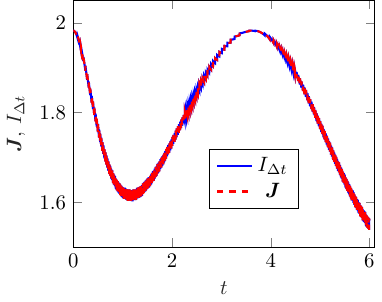} &
  \includegraphics[width=0.5\textwidth]{oscillatory_heatmap.png}
  \end{tabular}

\caption{
With $T=6$, $\dt = 2\cdot 10^{-5}$, $S=3$, $b$, $\uinconv$ defined in \eqref{eq:tests_oscillatory_b}, \eqref{eq:uin_conv}, $\Rcont$ defined in \eqref{eq:tests_oscillatory_R_smooth} and $\alpha=0.5$.
Left: $I_\dt$ computed with \eqref{eq:scheme} and \eqref{eq:HP1}, and $\bJ$ computed with \eqref{eq:scheme_ep}, as functions of $t$.
Right: $u_\dt$ computed with \eqref{eq:scheme}-\eqref{eq:HP1} as a function of $x$ (trait) and $t$ (time). The solid lines represent respectively the local minima of $u_\dt$, and the optimal trait. Light colors emphasize small values of $u_\dt$.
}
\label{fig:oscillatory}
\end{figure}
%#########################################################

The parameter $\alpha$ in \eqref{eq:tests_oscillatory_R_smooth} drives the speed at which the environment changes. When it varies too quickly, the population does not have enough time to adapt and may vanish, see \cite{FigueroaIglesiasMirrahimi2018} for periodic-varying environments, or \cite{FigueroaIglesiasMirrahimi2021} for shifted environments. This behavior is illustrated in Fig. \ref{fig:fast_oscillatory_eps}, where we took $\alp=3$ in \eqref{eq:tests_oscillatory_R_smooth}, and where \eqref{eq:P_ep} is approximated with \eqref{eq:scheme_ep} for $\ep=10^{-8}$.
With this value of $\alpha$, scheme \eqref{eq:scheme_ep} suggests that the population vanishes in the asymptotics $\ep\to 0$ of \eqref{eq:P_ep}. Indeed, $\bJ$ defined by \eqref{eq:scheme_ep} goes to $0$ rather quickly. It is displayed on the left-hand side of Fig.\ref{fig:fast_oscillatory_eps} as a function of $t$. The value of $\bv$ is on the right-hand side of Fig. \ref{fig:fast_oscillatory_eps}, as a function of $t$ and $x$. As previously, note that the minimum of $\bv$ follows the optimal trait with a delay. The fact that the minimum of $\bv$ is not close to $0$ after some time, informs on the extinction of the population.

%#########################################################
%Figure fast_oscillatory_eps
%#########################################################
%\tikzsetnextfilename{fast_oscillatory_eps}
\begin{figure}[!ht]
% \centering
% \begin{tikzpicture}
% \begin{axis}[
% width=0.4\textwidth,
% at={(0,0)},
% xmin=0,
% xmax=6.1,
% xminorticks=true,
% xlabel style={font=\color{white!15!black}},
% xlabel={$t$},
% ymin=-0.1,
% ymax=2.1,
% yminorticks=true,
% ylabel style={font=\color{white!15!black}},
% ylabel={$\bJ$},
% legend style={at={(0.6,0.4)},anchor=north},
% ]
% \addplot[line width=1pt, solid,color=blue] table [x =Column1, y=Column2, each nth point = 10,col sep= comma]{fast_oscillatory_eps_pop.csv};
%
%
% \end{axis}
% \end{tikzpicture}
% \hspace{1cm}
% \includegraphics[width=0.5\textwidth]{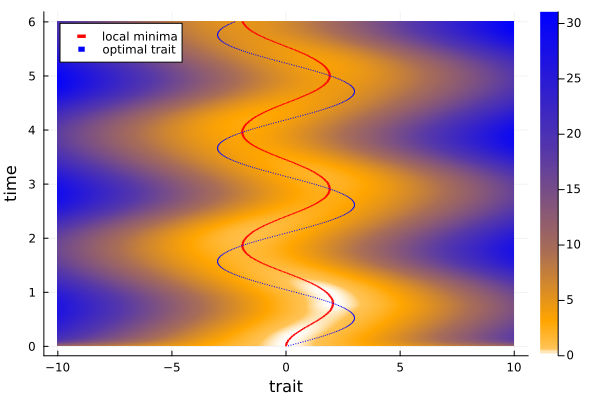}

\centering
  \begin{tabular}{@{}c@{}c@{}}
  \includegraphics[width=0.4\textwidth]{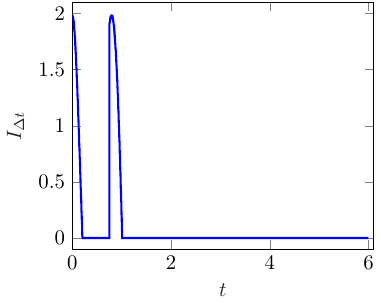} &
  \includegraphics[width=0.5\textwidth]{fast_}
  \end{tabular}

\caption{
With $T=6$, $\dt = 2\cdot 10^{-5}$, $S=3$, $b$, $\uinconv$ defined in \eqref{eq:tests_oscillatory_b}, \eqref{eq:uin_conv}, $\Rcont$ defined in \eqref{eq:tests_oscillatory_R_smooth} and $\alpha=3$.
Left: $\bJ$ computed with \eqref{eq:scheme_ep} as a function of $t$.
Right: $\bv$ computed with \eqref{eq:scheme_ep} as a function of $x$ (trait) and $t$ (time). The solid lines represent respectively the local minima of $\bv$, and the optimal trait. Light colors emphasize small values of $\bv$.
}
\label{fig:fast_oscillatory_eps}
\end{figure}
%#########################################################

Let us emphasize on the fact that the solutions of \eqref{eq:scheme_ep} and \eqref{eq:scheme} do not coincide when the population vanishes. Indeed, still considering $\Rcont$ as in \eqref{eq:tests_oscillatory_R_smooth} with $\alpha=3$, the solution of \eqref{eq:scheme}-\eqref{eq:HP1} is displayed on Fig. \ref{fig:fast_oscillatory}, with the plot of $u_\dt$ as a function of $t$ and $x$ on the right-hand side.  Observe on the left-hand side that $I_\dt$ takes negative values. This is due to the fact that the constraint $\min u_\dt=0$ is always satisfied in scheme \eqref{eq:scheme}, possibly at the cost of the positivity of $I_\dt$. This suggests that, in this case of fast oscillations of the environment, the $\ep\to 0$ limit of \eqref{eq:P_ep} is not given by \eqref{eq:HJ}, and that the asymptotic behavior of \eqref{eq:P_ep} is better conjectured with \eqref{eq:scheme_ep} than with \eqref{eq:scheme}.

%#########################################################
%Figure fast_oscillatory
%#########################################################
%\tikzsetnextfilename{fast_oscillatory}
\begin{figure}[!ht]
% \centering
% \begin{tikzpicture}
% \begin{axis}[
% width=0.4\textwidth,
% at={(0,0)},
% xmin=0,
% xmax=6.1,
% xminorticks=true,
% xlabel style={font=\color{white!15!black}},
% xlabel={$t$},
% ymin=-4.2,
% ymax=2.1,
% yminorticks=true,
% ylabel style={font=\color{white!15!black}},
% ylabel={$I_\dt$},
% legend style={at={(0.6,0.4)},anchor=north},
% ]
% \addplot[line width=1pt, solid,color=blue] table [x =Column1, y=Column2, each nth point = 10,col sep= comma]{fast_oscillatory_pop.csv};
%
%
% \end{axis}
% \end{tikzpicture}
% \hspace{1cm}
% \includegraphics[width=0.5\textwidth]{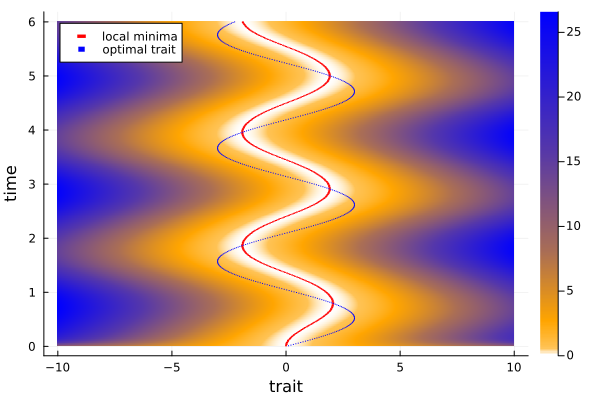}

\centering
  \begin{tabular}{@{}c@{}c@{}}
  \includegraphics[width=0.4\textwidth]{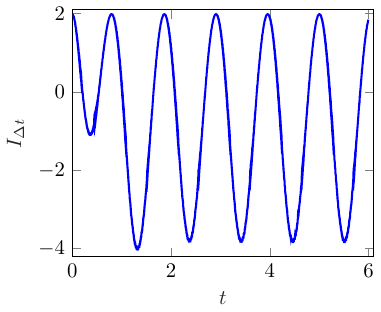} &
  \includegraphics[width=0.5\textwidth]{fast_oscillatory_heatmap.png}
  \end{tabular}

\caption{
With $T=6$, $\dt = 2\cdot 10^{-5}$, $S=3$, $b$, $\uinconv$ defined in \eqref{eq:tests_oscillatory_b}, \eqref{eq:uin_conv}, $\Rcont$ defined in \eqref{eq:tests_oscillatory_R_smooth} and $\alpha=3$.
Left: $I_\dt$ computed with \eqref{eq:scheme} and \eqref{eq:HP1} as a function of $t$.
Right: $u_\dt$ computed with \eqref{eq:scheme}-\eqref{eq:HP1} as a function of $x$ (trait) and $t$ (time). The solid lines represent respectively the local minima of $u_\dt$, and the optimal trait. Light colors emphasize small values of $u_\dt$.
}
\label{fig:fast_oscillatory}
\end{figure}
%#########################################################

Eventually, we consider a constant by parts environment, as in \cite{CostaEtchegarayMirrahimi2021}, in a regime of quick changes in the environment. In what follows, we will define $\Rcont$ as
\begin{equation}
 \label{eq:tests_oscillatory_R_notsmooth}
 R(t,x,I)=-\left(x- 6\left\lceil \sin(3t-0.9) \right\rceil\mathds{1}_{t>0.3} \right)^2-I.
\end{equation}
Following the previous test case, we conjecture the small $\ep$ limit of \eqref{eq:P_ep} to vanish, because of the too fast changes of the environment. Fig. \ref{fig:fast_sq_oscillatory_eps} suggests that scheme \eqref{eq:scheme_ep} is robust enough to deal with the lack of regularity of $\Rcont$. Indeed, $\bJ$ is represented as a function of $t$ on the left-hand side of Fig. \ref{fig:fast_sq_oscillatory_eps}. Once again, it vanishes quickly, as expected. The evolution of $\bv$ is displayed on the right-hand side of Fig. \ref{fig:fast_sq_oscillatory_eps} as a function of $t$ and $x$. As previously, the minimum of $\bv$ is nonzero after some time, and its location follows the optimal trait with a delay, despite its lack of regularity. Fig. \ref{fig:fast_sq_oscillatory_eps} has been obtained with  $\ep=10^{-8}$ in \eqref{eq:scheme_ep}.

%#########################################################
%Figure fast_sq_oscillatory_eps
%#########################################################
%\tikzsetnextfilename{fast_sq_oscillatory_eps}
\begin{figure}[!ht]
% \centering
% \begin{tikzpicture}
% \begin{axis}[
% width=0.4\textwidth,
% at={(0,0)},
% xmin=0,
% xmax=6.1,
% xminorticks=true,
% xlabel style={font=\color{white!15!black}},
% xlabel={$t$},
% ymin=-0.1,
% ymax=2.1,
% yminorticks=true,
% ylabel style={font=\color{white!15!black}},
% ylabel={$\bJ$},
% legend style={at={(0.6,0.4)},anchor=north},
% ]
% \addplot[line width=1pt, solid,color=blue] table [x =Column1, y=Column2, each nth point = 100,col sep= comma]{fast_sq_oscillatory_eps_pop.csv};
%
%
% \end{axis}
% \end{tikzpicture}
% \hspace{1cm}
% \includegraphics[width=0.5\textwidth]{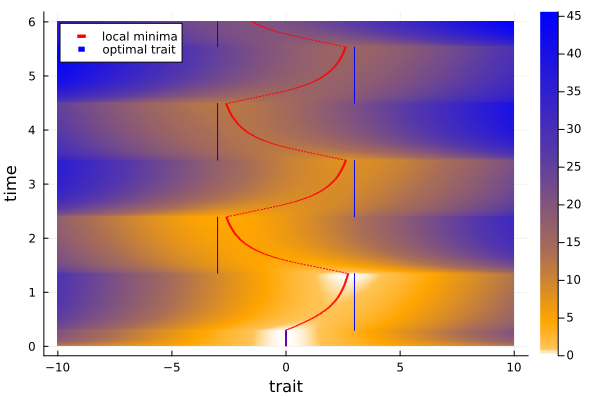}

\centering
  \begin{tabular}{@{}c@{}c@{}}
  \includegraphics[width=0.4\textwidth]{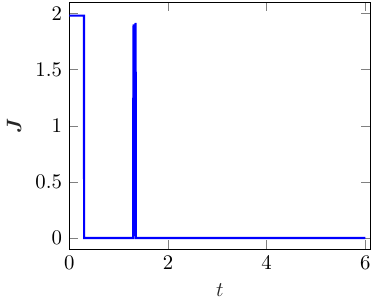} &
  \includegraphics[width=0.5\textwidth]{fast_sq_oscillatory_eps_heatmap.png}
  \end{tabular}

\caption{
With $T=6$, $\dt = 2\cdot 10^{-5}$, $S=3$, $b$, $\uinconv$ and $\Rcont$ defined in \eqref{eq:tests_oscillatory_b}, \eqref{eq:uin_conv} and in \eqref{eq:tests_oscillatory_R_notsmooth}.
Left: $\bJ$ computed with \eqref{eq:scheme_ep} as a function of $t$.
Right: $\bv$ computed with \eqref{eq:scheme_ep} as a function of $x$ (trait) and $t$ (time). The solid lines represent respectively the local minima of $\bv$, and the optimal trait. Light colors emphasize small values of $\bv$.
}
\label{fig:fast_sq_oscillatory_eps}
\end{figure}
%#########################################################

Note that in this test case, scheme \eqref{eq:scheme} again gives results far from the ones of \eqref{eq:scheme_ep}. They are displayed in Fig. \ref{fig:fast_sq_oscillatory}, with $I_\dt$ as a function of $t$ on the left-hand side and $u_\dt$ as a function of $t$ and $x$ on the right-hand side. Once again, the constraint $\min u_\dt=0$ leads to negative values for $I_\dt$. Moreover, observe that $I_\dt$ has decreasing jumps. This could not have been possible if $\Rcont$ had Lipschitz regularity in $t$, see Prop. \ref{thm:existence}-\ref{it:I_BVbound}. Up to our knowledge, the well-posedness of \eqref{eq:HJ} is this context is an open question.   Despite the robustness of \eqref{eq:scheme}-\eqref{eq:HP1} regarding the lack of regularity of $\Rcont$, it corroborates the fact that \eqref{eq:HJ} is not the $\ep\to 0$ limit of \eqref{eq:P_ep} in this case.

%#########################################################
%Figure fast_sq_oscillatory
%#########################################################
%\tikzsetnextfilename{fast_sq_oscillatory}
\begin{figure}[!ht]
% \centering
% \begin{tikzpicture}
% \begin{axis}[
% width=0.4\textwidth,
% at={(0,0)},
% xmin=0,
% xmax=6.1,
% xminorticks=true,
% xlabel style={font=\color{white!15!black}},
% xlabel={$t$},
% ymin=-32,
% ymax=5,
% yminorticks=true,
% ylabel style={font=\color{white!15!black}},
% ylabel={$I_\dt$},
% legend style={at={(0.6,0.4)},anchor=north},
% ]
% \addplot[line width=1pt, solid,color=blue] table [x =Column1, y=Column2, each nth point = 100,col sep= comma]{fast_sq_oscillatory_pop.csv};
%
%
% \end{axis}
% \end{tikzpicture}
% \hspace{1cm}
% \includegraphics[width=0.5\textwidth]{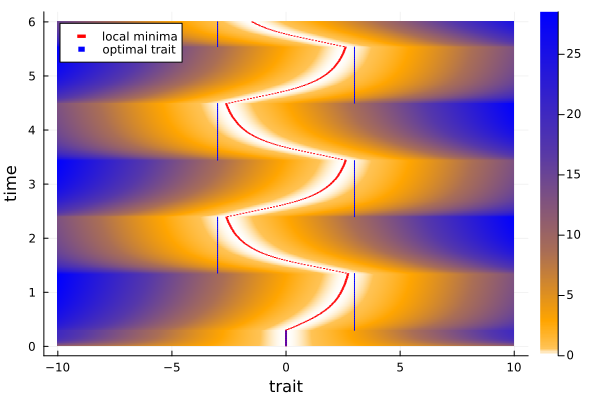}

\centering
  \begin{tabular}{@{}c@{}c@{}}
  \includegraphics[width=0.4\textwidth]{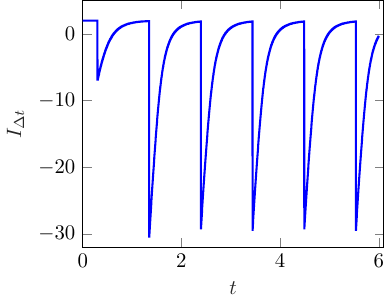} &
  \includegraphics[width=0.5\textwidth]{fast_sq_oscillatory_heatmap.png}
  \end{tabular}

\caption{
With $T=6$, $\dt = 2\cdot 10^{-5}$, $S=3$, $b$, $\uinconv$ and $\Rcont$ defined in \eqref{eq:tests_oscillatory_b}, \eqref{eq:uin_conv} and in \eqref{eq:tests_oscillatory_R_notsmooth}.
Left: $I_\dt$ computed with \eqref{eq:scheme}-\eqref{eq:HP1} as a function of $t$.
Right: $u_\dt$ computed with \eqref{eq:scheme}-\eqref{eq:HP1} as a function of $x$ (trait) and $t$ (time). The solid lines represent respectively the local minima of $u_\dt$, and the optimal trait. Light colors emphasize small values of $u_\dt$.
}
\label{fig:fast_sq_oscillatory}
\end{figure}
%#########################################################

\subsection{Generalization}

In this section, we investigate the generalization of the results of the paper, especially when the dimension is greater than $1$.
Let us start with
Lotka-Volterra integral equation in the multi-dimensional case, with Hamiltonian \eqref{eq:discussion_H} as in Section \ref{sec:Example}. Suppose that the mutation kernel $\K_d$ satisfies
\[
 \forall x\in\R^d, \; \K_d(x)=\prod\limits_{j=1}^d \K(x_j),
\]
with $\K$ defined in \eqref{eq:discussion_H}. Then, the Hamiltonian $\Hcont$ can be reformulated as
\[
 \forall \bp\in\R^d, \; \Hcont(\bp)=\prod\limits_{j=1}^d \int_\R \K(x_j) \e^{-z_j \bp} \dd z_j -1,
\]
so that the different choices of $\dHe$ proposed in Section \ref{sec:schemes} in dimension $1$ can be plugged in the product, to define a numerical Hamiltonian in dimension greater than $1$. Remark that one has to pay attention to the constant $-1$, that is not in the multiplication, but this is quickly addressed.
The AP scheme of Section \ref{sec:APscheme} is generalized similarly, and Prop. \ref{prop:Scheme_ep_AP} holds. However, the convergence of the scheme for the limit constrained Hamilton-Jacobi equation cannot be established as easily. This is the purpose of this section.

We studied
above
a class of numerical schemes for constrained Hamilton-Jacobi equations such as \eqref{eq:HJ}, in dimension $1$.
We
discuss here the design of a scheme for \eqref{eq:HJ} when the dimension is greater than $1$, and when the Hamiltonian $H$ also depends on the trait variable $x$. This latter case happens for instance in \cite{NordmannPerthameTaing2018}, where the Hamiltonian is implicitly defined.
Let us consider $u$, the viscosity solution of
\begin{equation}
 \label{eq:HJ_extension}
 \left\{
 \begin{array}{l l}
\ds \partial_t u(t,x) + b(x,I(t)) \Hcont(x,\nabla_x u(t,x)) + \Rcont(t,x,I(t))=0, & \ds x\in\R^d, \; t\ge 0, \vspace{4pt} \\
\ds \min\limits_{x\in\R^d} u(t,x) = 0, & \ds t\ge 0,
 \end{array}
 \right.
\end{equation}
where the initial condition $\uin$ satisfies \eqref{hyp:u0_Lipschitz}-\eqref{hyp:u0_coercive}-\eqref{hyp:u0_minimum}. As previously, we suppose that $b$ satisfies \eqref{hyp:b_bounds}-\eqref{hyp:b_lipschitz}-\eqref{hyp:b_decreasing}, and that $\Rcont$ satisfies \eqref{hyp:R_decreasing}-\eqref{hyp:R_ImIM}-\eqref{hyp:R_bounded}-\eqref{hyp:R_t-Lipschitz}. As $\Hcont$ now depends also on $x$, the hypothesis must be adapted. We suppose that for all $x\in\R^d$, $\bp\mapsto \Hcont(x,\bp)$ is convex, and \eqref{hyp:H0} is replaced by
\begin{equation}
 \label{hyp:H0_extension}
 \forall x\in\R^d, \; \Hcont(x,0) = 0, \;\; \text{and}\;\;\forall (x,\bp)\in\R^d\times\R^d, \; \Hcont(x,\bp)\ge 0,
\end{equation}
and we emphasize once again on the fact that the condition $\Hcont(x,0)=0$ for all $x\in\R^d$ can be relaxed by considering $(x,\bp)\mapsto \Hcont(x,\bp)-\Hcont(x,0)$, and modifying $\Rcont$ accordingly.

For a sake of simplicity, we define a uniform trait grid, with the same trait step in all the directions, but this could be easily relaxed. Let $x_0\in\R^d$ and $\dx >0$. For all $\bi=(i_1,\dots,i_d)\in\Z^d$, define then $x_\bi = x_0+\dx \bi$. The time grid is kept unchanged.

As in dimension $1$, introduce $\dHe:\R^d\times\R^d\times\R^d\to \R$ and $\dRe:\R\times\R^d\times\R\to \R$, such that $\dHe(x,\bp,\bp)$ is an approximation of the Hamiltonian $\Hcont(x,\bp)$ for all $x\in\R^d$, and $\bp\in\R^d$, and that $\dRe$ approximates $\Rcont(t,x,I)$ for all $t\ge 0$, all $x\in\R^d$ and all $I\in\R$.
Assumptions \eqref{hyp:Heta_monotonous}-\eqref{hyp:Heta_zero}-\eqref{hyp:Heta_nonnegative}-\eqref{hyp:Heta_nonnegative}-\eqref{hyp:Heta_approximation} and \eqref{hyp:Reta_approximation} are adapted to this multi-dimensional context. Introduce $\LH>0$, and suppose that $\dHe$ and $\dRe$ satisfy the following properties:
\begin{itemize}
\item \textbf{Regularity in $x$:} $\forall (\bp,\bq)\in\R^d\times\R^d$, $x\mapsto \dHe(x,\bp,\bq)$ is smooth. Moreover, $\dHe(\cdot,\bp,\bq)$ and its first and second derivatives are bounded uniformly with respect to $(\bp,\bq)\in\R^d\times\R^d$ such that $ \|\bp\|_\infty\le \LH$, and $\|\bq\|_\infty\le \LH$.
 \item \textbf{Monotony:} $\forall \|\bp\|_\infty\le\LH$, $\forall \|\bq\|_\infty\le \LH$, $\forall x\in\R^d$, $\forall \be\in\R^d$ such that $\be\ge 0$ (coordinate by coordinate),
 \[
  \dHe(x,\bp,\bq+\be)\le\dHe(x,\bp,\bq)\le \dHe(x,\bp+\be,\bq),
 \]
 \item \textbf{Exact value at $0$:} $\forall x\in\R^d$, $\dHe(x,0,0)=0$.
\item \textbf{Positivity on the diagonal: } $\forall \bp\in\R^d$ such that $\|\bp\|_\infty\le \LH$, $\forall x\in\R^d$, $\dHe(x,\bp,\bp)\ge 0$.
\item \textbf{Consistency for $\dHe$:} $\exists \CLH>0$, $\forall x\in\R^d$, $\forall \|\bp\|_\infty\le \LH$, $\forall \eta\in(0,1]$,
\[
|\dHe(x,\bp,\bp)- \Hcont(x,\bp)|\le \CLH\eta.
\]
\item \textbf{Consistency for $\dRe$:} $\forall t\ge 0$, $\forall x\in\R^d$, $\forall I\ge 0$, $\forall \eta\in(0,1]$,
\[
 |\dRe(t,x,I)-\Rcont(t,x,I)|\le K\eta,
\]
for some constant $K$ as in \eqref{hyp:Reta_approximation}.
\end{itemize}
To adapt the Definition \ref{def:WellChosenScheme} of a $\LH$-well chosen scheme, two classes of numerical Hamiltonians are considered:
\begin{itemize}
  \item \textbf{Flat setting:} If  $\|\bp\|_\infty\le\LH$, and $\|\bq\|_\infty\le\LH$ are such that $\exists\ell\in\ccl 1,d\ccr$, $p_\ell <0$ and $q_\ell >0$, then for all $x\in\R^d$, and for all $\eta\in(0,1]$,
 \[
  \dHe\left(x, \bp,\bq \right) = \dHe\left(x,\begin{pmatrix} p_1 \\ \vdots \\ p_\ell \\ \vdots \\ p_d \end{pmatrix},\begin{pmatrix} q_1 \\ \vdots \\ q_\ell \\ \vdots \\ q_d \end{pmatrix} \right)
   = \dHe\left(x,\begin{pmatrix} p_1 \\ \vdots \\ 0 \\ \vdots \\ p_d \end{pmatrix},\begin{pmatrix} q_1 \\ \vdots \\ 0 \\ \vdots \\ q_d \end{pmatrix} \right),
 \]
\item \textbf{Convex setting:} for all $x\in\R^d$, $\dHe(x,\cdot,\cdot):B_{\|\cdot\|_\infty}(0,\LH)\times B_{\|\cdot\|_\infty}(0,\LH)\longrightarrow\R$ is convex.
\end{itemize}
The adaptation of Definition \ref{def:adapted}  is straightforward.
Then, notations are introduced to write the adaptation of scheme \eqref{eq:scheme} in dimension $d>1$. For all $\ell\in\ccl 1,d\ccr$, denote $T_\ell^-: \R^{\Z^{d}}  \longrightarrow  \R^{\Z^d}$, and $T_\ell^+: \R^{\Z^{d}}  \longrightarrow  \R^{\Z^d}$. They are defined such that for all  $\bu=(u_\bi)_{i\in\Z^d}\in \R^{\Z^d}$,
$T_\ell^\pm(\bu)\in\R^{\Z^d}$,  and for all $\bi\in\Z^d$,
\begin{align*}
 T_\ell^-(\bu)_\bi
 =u_{i_1,\dots,i_{\ell-1},\dots,i_d},
 \text{\;\;\;\;and\;\;\;\;}
 T_\ell^+(\bu)_\bi
 =u_{i_1,\dots,i_{\ell+1},\dots,i_d},
\end{align*}
where we denoted $ u_\bi=u_{i_1,\dots,i_\ell,\dots,i_d}$.
Then, let
 $T^\pm: \R^{\Z^d} \longrightarrow \left(\R^{\Z^d}\right)^d$, such that,
\begin{equation*}
 \forall  \bu\in\R^{\Z^d},\;
 \forall \bi\in\Z^d,\;\; T^\pm(\bu)_\bi= \begin{pmatrix} T_1^\pm(\bu)_\bi \\ \vdots \\ T_d^\pm (\bu)_\bi \end{pmatrix},
\end{equation*}
and define the adaptation of scheme \eqref{eq:scheme} for all $n\in\ccl 0,N_T-1\ccr$ and all $\bi\in\Z^d$, by
\begin{equation}
 \label{eq:scheme_extension}
 \left\{
 \begin{array}{l}
\ds \frac{u^{n+1}_\bi-u^n_\bi}{\dt}+ b\left(x_\bi, I^{n+1}\right) \dHe\left(x_\bi, \frac{I-T^-}{\dx}(\bu^n)_\bi, \frac{T^+-I}{\dx}(\bu^n)_\bi \right)+\dRe\left(\tnp,x_\bi,I^{n+1}\right)=0 \vspace{4pt} \\
\ds \min_{\bi\in\Z^d} u^n_\bi=0,
\end{array}
\right.
\end{equation}
with initialization $u^0_\bi=\uin(x_\bi)$ for all $\bi\in\Z^d$. Using the assumptions and notations above, one can follow the proofs of Prop. \ref{thm:existence} and \ref{thm:limitconv}, and remark that everything can be rewritten (with a small change in \eqref{eq:CFLsatisfier}), except the semi-concavity of the numerical solution. It is in fact straightforward in the flat setting, as in dimension $1$. However, in the convex setting,
it seems that the adaptation of the proof of Prop. \ref{thm:existence}-\ref{it:sc} (Section \ref{sec:semiconcavity}) in dimension greater than $1$, or with $\Hcont$ depending also on $x$, is more delicate. This question will be addressed in a dedicated study.

To conclude roughly, scheme \eqref{eq:scheme_extension} converges when $\dHe$ is in the flat setting. More precisely, Prop. \ref{thm:existence} and Prop. \ref{thm:limitconv} hold, with the necessary adaptations of notations and hypothesis.
In the convex setting, no such results hold. Coming back to Lotka-Volterra integral equation, with Hamiltonian \ref{eq:discussion_H}, one can notice that the tensor product of non necessarily non-negative  convex discrete Hamiltonian  $H_\eta$ is not convex. Hence, the tensor product of the examples of discrete Hamiltonians of Section \ref{sec:Example} does not give any example of multi-dimensional discrete Hamiltonian which would be in the \emph{convex setting} but not in the \emph{flat setting}. The investigation of the convergence of scheme \eqref{eq:scheme_extension} in the \emph{convex setting} is postponed to a future work.

\appendix

\section{Asymptotic-preserving property of \eqref{eq:scheme_ep}}
\label{sec:APProof}

\subsection{Proof of Prop. \ref{prop:Scheme_ep_stabilite}}
\label{sec:APProof_lemme}

In this section, we show that scheme \eqref{eq:scheme_ep} is well-posed, and we prove stability estimates, which are uniform w.r.t. $\ep$, if $\ep$ is small enough.
Suppose that the hypothesis of Prop. \ref{prop:Scheme_ep_stabilite} are satisfied.
The proof we propose above strongly relies on the monotony of \eqref{eq:MepvI}, as for the proof of Prop. \eqref{thm:existence}. Note also that, it is close with the proof of the stability Lemma in \cite{CalvezHivertYoldas2022} although technical details are different. It is done by induction on the time step. Since, the items of Prop. \ref{prop:Scheme_ep_stabilite} are true for the initial data, suppose now that $\bv^n$ is constructed such that \ref{it:AP_Lx}-\ref{it:AP_coercive}-\ref{it:AP_min} hold. We show that there exists an unique pair $(\bv^{n+1}, J^{n+1})$ satisfying \eqref{eq:scheme_ep}, and that it satisfies \ref{it:AP_Lx}-\ref{it:AP_coercive}-\ref{it:AP_Jbound}-\ref{it:AP_min}.

 The well-posedness of \eqref{eq:scheme_ep} is immediate remarking that $J^{n+1}$ is solution of
\begin{equation}
\label{eq:AP_Jnp}
 J^{n+1} =\dx\sum\limits_{i\in\Z} \psi(x_i) \e^{-(\MepvInp(\bv^n)_i-\dt R(\tnp,x_i,J^{n+1}))/\ep},
\end{equation}
that the right-hand side of this equality is decreasing w.r.t $J^{n+1}$ thanks to \eqref{hyp:R_decreasing} and Lemma \ref{lem:AP_monotonuous}, and that the left-hand side in increasing.
In other words, $J^{n+1}$ is the unique solution of $\phiAP(J)=0$, with
\begin{equation}
\label{eq:AP_phi}
\phiAP:J\mapsto  J-\dx\sum\limits_{i\in\Z} \psi(x_i) \e^{-(\MepvI(\bv^n)-\dt R(\tnp,x_i,J))/\ep}.
\end{equation}
Since \eqref{eq:AP_Jnp} has an unique solution, it can be plugged in \eqref{eq:scheme_ep} so that $\bv^{n+1}$ is also uniquely determined.

\begin{rmq}
 In practice, $J^{n+1}$ is computed  using Newton's method for the equation $\phiAP(J)=0$. However, depending on the expressions of $R$ and $b$, the computation of the iterations may be tricky. Moreover, the dependency in $\ep$ forbids the use of approximated Newton's method. Indeed, the problem becomes stiff when $\ep\to0$, and more computational time would be needed for small $\ep$ (if the resolution does not simply break).  To avoid this issue,   automatic differentiation should be used.
\end{rmq}

 The bound from below for $J^{n+1}$ in \ref{it:AP_Jbound} is obtained from
\[
 \phiAP(J)\le J-\dx\psi(x_j) \e^{-(\MepvI(\bv^n)_j - \dt R(\tnp,x_j,J))/\ep},
\]
which holds for all $j\in\Z$ and for all $J\in\R$. Moreover,
as $b$ is positive, and  thanks to \ref{it:AP_min}, the expression of $\MepvI$ in \eqref{eq:MepvI} yields for $j\in\Z$, such that $v^n_j=\min\bv^n$,
\[
\MepvI(\bv^n)_j\le v^n_j\le c_M\ep,
\]
so that for all $J\le I_m/2$,
\[
 \phiAP(J)\le\phiAP(I_m/2) \le \frac{I_m}{2}-\dx\psi_m \e^{-c_M}\e^{\dt R(\tnp,x_j,I_m/2)/\ep},
\]
where the fact that $\phiAP$ is increasing was used for the first inequality, and \eqref{hyp:psi} in the second.
Then \eqref{hyp:R_decreasing}-\eqref{hyp:R_ImIM} yield,
\details{
\begin{align*}
 &R(\tnp,x_j,I_m/2) -R(\tnp,x_i,I_m) \ge \frac{I_m}{2K} \\
 &R(\tnp,x_j,I_m/2) \ge R(\tnp,x_i,I_m) + \frac{I_m}{2K} \ge \frac{I_m}{2K}
\end{align*}
}
\[
 R(\tnp,x_j,I_m/2) \ge R(\tnp,x_i,I_m) + \frac{I_m}{2K} \ge \frac{I_m}{2K},
\]
and hence,
\[
\forall J\le I_m/2,\;\phiAP(J)\le \frac{I_m}{2}-\dx \psi_m \e^{-c_M} \e^{\dt \frac{I_m}{2\ep K}} \underset{\ep\to0}\longrightarrow -\infty.
\]
One can conclude that there exists $\ep_1>0$, which depends only on $I_m$,  $\psi_m$, $c_M$, $K$, $\dt$ and $\dx$, such that for all $\ep <\ep_1$ and for all $J\le I_m/2$, $\phiAP(J)<0$. As $\phiAP$ is increasing, it gives the bound from below for $J^{n+1}$
\begin{equation}
 \label{eq:AP_minorationJ}
 \forall \ep<\ep_1, \;\; J^{n+1}\ge I_m/2.
\end{equation}

\begin{rmq}
 We emphasize on the fact that $\ep_1$ determined above can be fixed once for all, and such that is does not depend on $n$.
\end{rmq}

The propagation of the bound from below of $\bv^n$ in  \ref{it:AP_coercive} is a straightforward consequence of Lemma \ref{lem:AP_monotonuous}. Indeed, as \eqref{eq:CFLAP} is satisfied, for all $i\in\Z$,
\[
 \MepvInp(\bv^n)_i \ge \ua|x_i| + \ubeta_{\tn} -\bM\dt \dz \sum\limits_{k\in\Z} \K(z_k) \e^{\ua|z_k|},
\]
and since $R$ is decreasing w.r.t. $J$, assumption \eqref{hyp:R_bounded} gives
\[
 R(\tnp,x_i,J^{n+1}) \le R(\tnp,x_i,I_m/2)\le K.
\]
 Hence, property \ref{it:AP_coercive} is obtained with the choice
\[
 \ubeta_\tnp = \ubeta_\tn - \bM\dt\dz\sum\limits_{k\in\Z} \K(z_k) \e^{\ua|x_i|} -\dt K,
\]
and together with \eqref{eq:AP_minorationJ}, it yields the bound of $\min\bv^{n+1}$. Indeed, we have
\details{
\[
 I_m/2\le J^{n+1}=\dx\sum\limits_{i\in\Z} \psi(x_i)\e^{-v^{n+1}_i/\ep} \le \dx\psi_M\left(\sum\limits_{|i|\le N} \e^{-v^{n+1}_i/\ep}+ \sum\limits_{|i|>N} \e^{-(\ubeta_{T} + \ua|x_i|)/\ep}
 \right),
\]
}
\[
I_m/2\le J^{n+1}=\dx\sum\limits_{i\in\Z} \psi(x_i)\e^{-v^{n+1}_i/ep} \le \dx\psi_M\left(\sum\limits_{|i|\le N} \e^{-v^{n+1}_i/\ep}+ \sum\limits_{|i|>N} \e^{-(\ubeta_{T}-\ua|x_0| + \ua|x_i-x_0|)/\ep}
 \right),
\]
for any $N\in\N$. As $x_i-x_0=i\dx$, the above expression is simplified, such that
\details{
\[
 I_m/2\le (2N-1) \dx\psi_M \e^{-\min\bv^{n+1}/\ep} + 2\dx\psi_M \e^{-(\ubeta_{T} -\ua|x_0| + \ua N\dx)/\ep }\sum\limits_{i\ge 0} \e^{-\ua i\dx/\ep}
\]
}
\[
 I_m/2\le (2N-1) \dx\psi_M \e^{-\min\bv^{n+1}/\ep} + \frac{2\dx \psi_M \e^{-(\ubeta_{T} -\ua|x_0| + \ua N\dx)/\ep } }{1-\e^{-\ua\dx/\ep}}.
\]
One can now fix $N$ such that $\ubeta_{T} -\ua|x_0|+ \ua N\dx>0$, and determine $\ep_2>0$ such that
\[
 \forall \ep\in(0,\ep_2), \;\;  \frac{2\dx \psi_M \e^{-(\ubeta_{T} -\ua|x_0| + \ua N\dx)/\ep } }{1-\e^{-\ua\dx/\ep}}\le I_m/4.
\]
This gives $\min\bv^{n+1}\le \ep c_M$, with
\details{
\begin{align*}
& I_m/4\le (2N-1)\dx\psi_M\e^{-\min\bv^{n+1}/\ep} \\
 \Leftrightarrow & \frac{I_m}{4(2N+1) \dx\psi_M} \le \e^{-\min\bv^{n+1}/\ep} \\
 \Leftrightarrow & -\ep\ln\left( \frac{I_m}{4(2N-1) \dx\psi_M} \right) \ge \min\bv^{n+1}
\end{align*}
}
\[
 c_M=\max\left\{ -\ln\left( \frac{I_m}{4(2N-1) \dx\psi_M} \right), \cM \right\}.
\]

\begin{rmq}
Note that $N$, $\ep_2$ and $c_M$ only depend on the constants arising in the assumptions, and of $\dx$. They can hence be fixed once for all, and they are independent on $n$.
\end{rmq}

The upper bound for $J^{n+1}$ is a consequence of the monotony of $\MepvI$, and of the previous result. Indeed, as \eqref{eq:CFLAP} is satisfied, and since the inequality $v^{n}_i \ge c_m \ep$, holds for all $i\in\Z$, it can be propagated so that
\details{
\[
 \forall i\in\Z,\;\forall J\in\R,\;\MepvI(\bu=u)_i=  u-\dt\dz \sum\limits_{k\in\Z} \K(z_k) b\left(x_i+\ep z_k, I\right)
\]
}
\[
\forall i\in\Z,\;\forall J\in\R,\; \MepvI(\bv^n)_i \ge c_m \ep - \dt \dz\bM \sum\limits_{k\in\Z} \K(z_k),
\]
where we used \eqref{hyp:b_bounds}. Consider now $\bv^{n+1}$ at point $j$ such that $v^{n+1}_j=\min\bv^{n+1}$. Such a minimum point exists, since $\bv^{n+1}$ satisfies \ref{it:AP_coercive}. Coming back to the expression of \eqref{eq:scheme_ep},
\begin{align}
\label{eq:AP_etape}
 \dt R(\tnp,x_j,J^{n+1})&=\MepvInp(\bv^n)_j - v^{n+1}_j
 \\&\ge (c_m-c_M)\ep -\dt \bM \dz\sum\limits_{k\in\Z} \K(z_k) + \dt R(\tnp,x_j,I_M), \nonumber
\end{align}
thanks to \eqref{hyp:R_ImIM}. We conclude using \eqref{hyp:R_decreasing}, to get another bound from below for $R(\tnp,x_j,I_M)$,
\[
 R(\tnp,x_j,I_M)\ge R(\tnp,x_j,2I_M) + \frac{I_M}{K_1},
\]
\details{
\[
 \dt R(\tnp,x_j,J^{n+1}) \ge \dt R(\tnp,x_j,2I_M) -\ep(c_M -c_m) + \dt \left(\frac{I_M}{K_1} -\bM\dz \sum\limits_{k\in\Z} \K(z_k)\right).
\]
}
\details{
\begin{align*}
 \dt R(\tnp,x_j,J^{n+1}) \ge& \dt R(\tnp,x_j,2I_M) -\ep(c_M -c_m)
 \\&+ \dt \left(\frac{I_M}{K_1} -\bM+\bM\left(1-\dz \sum\limits_{k\in\Z} \K(z_k)\right)\right)
 \\& \ge \dt R(\tnp,x_j,2I_M) -\ep(c_M -c_m)
 + \dt\left( \frac{I_M}{K_1}-\bM - \frac{1}{2}\right)
 \\& \ge \dt R(\tnp,x_j,2I_M) -\ep(c_M -c_m)  +\frac{\dt}{2}
\end{align*}
}
which is plugged in \eqref{eq:AP_etape}, to get with
 \eqref{hyp:R_ImIM}-\eqref{eq:AP_QuadratureAccurate}
\[
 \dt R(\tnp,x_j,J^{n+1})\ge \dt R(\tnp,x_j,2I_M) -\ep(c_M -c_m)  +\frac{\dt}{2}.
\]
Then, we define $\ep_3$ such that for all $\ep\in(0,\ep_3)$, $-\ep(c_M -c_m)  +\frac{\dt}{2}\ge 0$, and \eqref{hyp:R_decreasing} eventually yields $J^{n+1}\le 2I_M$, that is the second inequality in \ref{it:AP_Jbound}.

\begin{rmq}
 Once again, we emphasize on the fact that $\ep_3$ depends only on the constants defined in the assumptions and on $\dt$. In particular, it does not depend on $n$.
\end{rmq}

The bound from below of $\min \bv^{n+1}$ is straightforward using $J^{n+1}\le 2I_M$. Indeed, for all $i\in\Z$,
\[
 \psi_m \dx \e^{-v^{n+1}_i/\ep}\le \dx \sum\limits_{i\in\Z}\psi(x_i)\e^{-v^{n+1}_i/\ep}=J^{n+1} \le 2I_M,
\]
\details{
\begin{align*}
\forall i\in\Z, &\; \e^{-v^{n+1}_i/\ep} \le \frac{2I_M}{\psi_m \dx} \\
\Leftrightarrow & v^{n+1}_i\ge -\ep\ln\left(\frac{2I_M}{\psi_m\dx}\right)
\end{align*}
}
so that \ref{it:AP_min} holds, with $c_m$ independent of $n$. One can for instance consider
\[
c_m=\min\left\{
-\ln\left(\frac{2I_M}{\psi_m\dx}\right), \cm
\right\}.
\]
The Lipschitz bound \ref{it:AP_Lx} is then a consequence of \eqref{eq:CFLAP}, of Lemma \ref{lem:AP_monotonuous}, and of \ref{it:AP_Jbound}.
To conclude the proof, we eventually define $\ep_0=\min(\ep_1,\ep_2,\ep_3)$, such that items \ref{it:AP_Lx}-\ref{it:AP_coercive}-\ref{it:AP_Jbound}-\ref{it:AP_min} hold.

\subsection{Proof of Prop. \ref{prop:Scheme_ep_AP}}
\label{sec:APProof_prop}

As previously, the proof of Prop. \ref{prop:Scheme_ep_AP} is done by induction. Thanks to the assumptions, there exists $\bu^0$ such that
$\|\bu^0-\bv^0\|_\infty\longrightarrow_{\ep\to0} 0$ and that $\min\bu^0=0$.
 We suppose that it is true for a given $n\in\ccl 0, N_T-1\ccr$, and we show that there exists $\bu^{n+1}$, and $I^{n+1}$ such that
\[
\left\|\bv^{n+1}-\bu^{n+1}\right\|_\infty\underset{\ep\to 0}\longrightarrow 0,
  \;\;\;\;\text{and}\;\;\;\; J^{n+1}\underset{\ep\to 0}\longrightarrow I^{n+1},
\]
and that $\bu$ and $\bI$ satisfy scheme \eqref{eq:scheme_limit}. First of all, Prop. \ref{prop:Scheme_ep_stabilite} gives that $(J^{n+1})_{\ep\in(0,\ep_0]}$ is uniformly bounded w.r.t. $\ep$. Hence, there exists $I^{n+1}$ such that $J^{n+1}\longrightarrow_{\ep\to 0} I^{n+1}$, up to an extraction. Consider now this extraction and define  $\bu^{n+1,I^{n+1}}$ as in the first line of \eqref{eq:scheme_limit}. Let us show that
\begin{equation}
\label{eq:AP_induction_goal}
 \left\|\bv^{n+1}-\bu^{n+1,I^{n+1}}\right\|_\infty \underset{\ep\to 0}\longrightarrow 0,
\end{equation}
and that $\min\bu^{n+1,I^{n+1}}=0$.
Let $i\in\Z$, one has, with \eqref{eq:scheme_ep} and \eqref{eq:scheme_limit},
\begin{align*}
 \left| v^{n+1}_i-u^{n+1,I^{n+1}}_i\right| \le &
 \dt \left|R\left(\tnp,x_i,I^{n+1}\right) - R\left(\tnp,x_i,J^{n+1}\right)\right|
 + \left|\mathcal{M}_0^{I^{n+1}}(\bu^n)_i - \mathcal{M}_0^{J^{n+1}}(\bu^n)_i\right|
 \\ & + \left|\mathcal{M}_0^{J^{n+1}}(\bu^n)_i - \mathcal{M}_0^{J^{n+1}}(\bv^n)_i\right|
 + \left| \mathcal{M}_0^{J^{n+1}}(\bv^n)_i - \mathcal{M}_\ep^{J^{n+1}}(\bv^n)_i \right|,
\end{align*}
where the three first terms can be easily estimated. Indeed, using \eqref{hyp:R_decreasing}, one has
\[
 \left|R\left(\tnp,x_i,I^{n+1}\right) - R\left(\tnp,x_i,J^{n+1}\right)\right| \le K|I^{n+1}-J^{n+1}|,% \underset{\ep\to 0}\longrightarrow 0,
\]
while the definition of $\MzerovI$ in \eqref{eq:MzerovI}, the fact that $b$ is $\Lb$-Lipschitz (see \eqref{hyp:b_lipschitz}), and that $\bu^n$ is $\Lx(t^n)$-Lipschitz (Prop. \ref{thm:existence}-\ref{it:Lx}) yield
\details{
\begin{align*}
  \left|\mathcal{M}_0^{I^{n+1}}(\bu^n)_i - \mathcal{M}_0^{J^{n+1}}(\bu^n)_i\right| & \le \left|b\left(x_i,I^{n+1}\right)-b\left(x_i,J^{n+1}\right)\right| \dt\dz \sum\limits_{k\in\Z} \K(z_k) \e^{|z_k|\Lx(t^n)}
\end{align*}
}
\begin{align*}
  \left|\mathcal{M}_0^{I^{n+1}}(\bu^n)_i - \mathcal{M}_0^{J^{n+1}}(\bu^n)_i\right| & \le \Lb\left|I^{n+1}-J^{n+1}\right| \dt\dz \sum\limits_{k\in\Z} \K(z_k) \e^{|z_k|\Lx(t^n)}.
\end{align*}
Eventually, as $\mathcal{M}^{J^{n+1}}_0$ satisfies Lemma \ref{lem:AP_monotonuous},
\[
 \left\|\mathcal{M}_0^{J^{n+1}}\left(\bu^n\right) - \mathcal{M}_0^{J^{n+1}}(\bv^n)\right\|_\infty\le \|\bu^n-\bv^n\|_\infty.
\]
The estimate of the last term is obtained thanks to its expression. It is rewritten to obtain
\begin{align*}
 \frac{1}{\dt\dz}&\left|\mathcal{M}_0^{J^{n+1}}\left(\bv^n\right)_i - \mathcal{M}_\ep^{J^{n+1}}\left(\bv^n\right)_i\right| \\ \le&
 \sum\limits_{k\ge 0} \K\left(z_k\right) \left|b\left(x_i+\ep z_k,J^{n+1}\right)-b\left(x_i,J^{n+1}\right)\right| \e^{-z_k\left(v^n_{i+1}-v^n_i\right)/\dx}
 \\ &+\sum\limits_{k\le -1} \K\left(z_k\right) \left|b\left(x_i+\ep z_k,J^{n+1}\right)-b\left(x_i,J^{n+1}\right)\right|\e^{-z_k\left(v^n_i-v^n_{i-1}\right)/\dx}
 \\&+ \sum\limits_{\ep z_k\ge \dx}\K\left(z_k\right) b\left(x_i+\ep z_k,J^{n+1}\right) \e^{-z_k\left(v^n_{i+1}-v^n_i\right)/\dx}
 \\&+ \sum\limits_{\ep z_k\le -\dx}\K\left(z_k\right) b\left(x_i+\ep z_k,J^{n+1}\right) \e^{-z_k\left(v^n_{i}-v^n_{i-1}\right)/\dx}
 \\&+ \sum\limits_{\ep \left|z_k\right|\ge \dx}\K\left(z_k\right) b\left(x_i+\ep z_k,J^{n+1}\right) \e^{\left(\tvik-v^n_i\right)/\ep}.
\end{align*}
\details{Est-ce vraiment utile de mettre cette grosse expression ? Ce n'est pas intéressant, mais sinon, j'ai peur qu'on ne voie pas du tout d'où la ligne suivante vient.}
Then, Prop. \ref{prop:Scheme_ep_stabilite}-\ref{it:AP_Lx} and \eqref{hyp:b_lipschitz} yield
\begin{align*}
 \frac{1}{\dt\dz}\left|\mathcal{M}_0^{J^{n+1}}\left(\bv^n\right)_i - \mathcal{M}_\ep^{J^{n+1}}\left(\bv^n\right)_i\right| \le& \;\ep\; \Lb\sum\limits_{k\in\Z} \K\left(z_k\right) \left|z_k\right| \e^{\left|z_k\right| \Lx\left(\tn\right)}
 \\&+ 2\bM \sum\limits_{\ep\left|z_k\right|\ge \dx} \K\left(z_k\right) \e^{\left|z_k\right| \Lx\left(\tn\right)},
\end{align*}
so that $\left\|\mathcal{M}_0^{J^{n+1}}\left(\bv^n\right) - \mathcal{M}_\ep^{J^{n+1}}\left(\bv^n\right) \right\|_\infty \longrightarrow_{\ep\to 0}0$. Equation \eqref{eq:AP_induction_goal} is a consequence of these estimates, and  property $\min\bu^{n+1,I^{n+1}}=0$ comes from Prop. \ref{prop:Scheme_ep_stabilite}-\ref{it:AP_min}.

To conclude the proof, one has to show that all extractions of $\left(J^{n+1}\right)_{\ep\in(0,\ep_0)}$ converge to the same limit.We argue by contradiction, supposing that there are two extractions converging respectively to $I^{n+1}_a$ and $I^{n+1}_b$, with $I^{n+1}_a<I^{n+1}_b$. With the same notations as above,  it provides two extractions of $(v^{n+1}_i)_{\ep\in(0,\ep_0)}$, which converge respectively to $u^{n+1,I^{n+1}_a}_i$ and $u^{n+1,I^{n+1}_b}_i$ when $\ep\to 0$, with $\min \bu^{n+1,I^{n+1}_a}=\min \bu^{n+1,I^{n+1}_b}=0$. However, considering
\[
 u^{n+1,I^{n+1}_a}_i - u^{n+1,I^{n+1}_b}_i = \mathcal{M}^{I^{n+1}_a}_0(\bu^n)_i - \mathcal{M}^{I^{n+1}_b}_0(\bu^n)_i + \dt \left(R\left(\tnp,x_i,I^{n+1}_b\right) - R\left(\tnp,x_i,I^{n+1}_a\right)\right),
\]
at the minimum point of $\bu^{n+1,I^{n+1}_b}$ yields a contradiction, since $I\mapsto \mathcal{M}^I_0(\bu^n)_i$ is non-decreasing (see Remark \ref{rmq:AP_properties_M0}), and thanks to \eqref{hyp:R_decreasing}.

\bibliographystyle{plain}
\bibliography{Biblio}
\end{document}